\definecolor{azure}{rgb}{0.25, 0.41, 0.88}
\tikzset{cross/.style={cross out, draw=black, minimum size=2*(#1-\pgflinewidth), inner sep=0pt, outer sep=0pt},
cross/.default={3pt}}
\normalfont\fontsize{14}{15}\bfseries}{\thesection}{1em}{}
\normalfont\fontsize{12}{15}\bfseries}{\thesubsection}{1em}{}
\newcommand{\expected}{\mathbb{E}}
\newcommand{\prob}{\mathbb{P}}
\newcommand{\eps}{\varepsilon}
\DeclareMathOperator*{\mymax}{\text{\normalfont max}}
\DeclareMathOperator*{\mymin}{\text{\normalfont min}}
\DeclareMathOperator*{\mysup}{\text{\normalfont sup}}
\DeclareMathOperator*{\mylim}{\text{\normalfont lim}}
\DeclareMathOperator*{\mylog}{\text{\normalfont log}}
\DeclareMathOperator*{\mylimsup}{\text{\normalfont limsup}}
\DeclareMathOperator*{\de}{\hspace{-.1em}\text{\normalfont d\hspace{-.15em}}}
\DeclareMathOperator{\Var}{Var}
\numberwithin{equation}{section}
\newtheorem{theorem}{Theorem}[section]
\newtheorem{corollary}[theorem]{Corollary}
\newtheorem{lemma}[theorem]{Lemma}
\newtheorem{proposition}[theorem]{Proposition}
\theoremstyle{definition}
\newtheorem{definition}[theorem]{Definition}
\newtheorem{remark}[theorem]{Remark}
\newtheorem*{acknowledgements}{Acknowledgements}
\DeclareSymbolFont{stixletters}{LS1}{stix}{m}{it}
\DeclareMathAccent{\cev}{\mathord}{stixletters}{"91}
\DeclareMathAccent{\vec}{\mathord}{stixletters}{"92}
\newcommand\@affiliationlist{}
\renewcommand\affil[2][]{%
  \g@addto@macro\@affiliationlist{%
    \item[\textsuperscript{\scriptsize #1}] #2%
  }%
}
\newlength{\affillabelwidth}
\renewcommand{\maketitle}{
  \vspace*{0em}%
  \begin{center}{\LARGE\bfseries \@title \par}\end{center}
  \vspace{1em}%
  \begin{center}{\small \scshape\@date \par}\end{center}\par 
  \vspace{4em}
  \begin{list}{}{\setlength{\leftmargin}{1.5em}\setlength{\rightmargin}{1.5em}}
    \item[] {\raggedright\large\@author \par}
  \end{list}
  \settowidth{\affillabelwidth}{\textsuperscript{\scriptsize 8}} 
  \begin{list}{}{%
      \setlength{\leftmargin}{2.2em}
      \setlength{\rightmargin}{1.5em}
      \setlength{\labelwidth}{\affillabelwidth}
      \setlength{\labelsep}{0.2em}
      \setlength{\itemsep}{.2em}
      \setlength{\parsep}{0pt}
      \setlength{\topsep}{0pt}
  }
    {\raggedright\small \@affiliationlist}
  \end{list}\vspace{0.5em}
}
\renewenvironment{abstract}
  {\par                               
   \begin{list}{}{                        
      \setlength{\leftmargin}{1.5em}
      \setlength{\rightmargin}{1.5em}}
   \item[]                                   
   {\bfseries\abstractname}\par               
   \small\noindent\ignorespaces}             
  {\end{list}}
\begin{document}

\allowdisplaybreaks

\date{\today}

\title{Stochastic Burgers Equation from Non-Product Stationary Measures via a Generalised Second-Order Boltzmann-Gibbs Principle}

\author{\bfseries P. Gonçalves\textsuperscript{1}, M. C. Ricciuti\textsuperscript{2}, G. M. Schütz\textsuperscript{1,3}
}

\affil[1]{Center for Mathematical Analysis, Geometry and Dynamical Systems, Instituto Superior Técnico, Universidade de Lisboa, Av. Rovisco Pais, 1049-001 Lisboa, Portugal}
\affil[2]{Department of Mathematics, Imperial College London, 180 Queen's Gate, London SW7 2AZ, United Kingdom}
\affil[3]{Institute for Advanced Simulation, Forschungszentrum Jülich, 52428 Jülich, Germany \\\vspace{.7em}Emails: \href{mailto:pgoncalves@tecnico.ulisboa.pt}{\texttt{pgoncalves@tecnico.ulisboa.pt}}, \\\href{mailto:maria.ricciuti18@imperial.ac.uk}{\texttt{maria.ricciuti18@imperial.ac.uk}}, \\\href{mailto:gunter.schuetz@tecnico.ulisboa.pt}{\texttt{gunter.schuetz@tecnico.ulisboa.pt}}}

\clearpage\maketitle
\thispagestyle{empty}

\begin{abstract}
We prove a generalised second-order Boltzmann-Gibbs principle for conservative interacting particle systems on a lattice whose stationary measures are not of product type and not invariant under particle jumps. The result, which requires neither a spectral gap bound nor an equivalence of ensembles, extends the classical framework to settings with correlated invariant measures and is based on quantitative bounds for the correlation decay. As an application, we show that the equilibrium density fluctuations of the Katz-Lebowitz-Spohn model with a given choice of parameters converge, under diffusive scaling, to the stationary energy solution of the stochastic Burgers equation. 
\end{abstract}

\tableofcontents

\section{Introduction}
In \cite{br84}, the authors introduced the celebrated \textit{Boltzmann-Gibbs principle}, a fundamental result in the field of statistical physics. Roughly speaking, it states that the space-time fluctuations of any field associated with a conservative model can be projected onto the fluctuation field of the conserved quantity. This result formalises the idea that non-conserved quantities fluctuate in a faster time scale than conserved quantities do, and thus, in the characteristic time scale of the conserved quantity, they average out and only their projection onto the conserved field survives in the limit. In \cite{gj14}, the authors extended that result as a \textit{second-order Boltzmann-Gibbs principle}, which states that the first-order correction of the limit in the classical principle can be written as a quadratic functional of the fluctuation field of the conserved quantity. This refinement allows to show convergence of the equilibrium fluctuations of a large class of weakly-asymmetric diffusive models to the \textit{energy solutions} of the stochastic Burgers or the Kardar-Parisi-Zhang (KPZ) equation, also introduced in \cite{gj14} and whose uniqueness in law was later proved in \cite{gp18}. 

One of the key ingredients of the proof in \cite{gj14} consists in the knowledge of a \textit{spectral gap inequality} and an \textit{equivalence of ensembles} for the dynamics. These assumptions are dropped in the subsequent paper \cite{gjs17}, which provides a proof for fields whose additive functionals can be written as polynomials. However, that work still assumed that the stationary measures are of product form and invariant under permutations of nearest-neighbour coordinates. Under these hypotheses, the stochastic Burgers or KPZ equation could be derived as the scaling limit of the fluctuation field for a large class of particle systems. 

Here, we generalise the result and establish what we call the \textit{generalised second-order Boltzmann-Gibbs principle} for particle systems evolving on a finite lattice with $N$ sites, or on the whole $\mathbb Z$. As in \cite{gjs17}, our bound requires neither a spectral gap inequality nor an equivalence of ensembles. Moreover, the stationary measures need not be of product form and invariant under nearest-neighbour swaps, as long as this invariance holds up to a correction of order at most $N^{-\alpha}$ for some $\alpha>0$, and the $m$-point correlation functions decay at least as fast as $N^{-\beta}$ for some $\beta>0$. 

Our arguments are based on a multi-scale analysis, first introduced in \cite{gon08} and then refined in \cite{gj14, gjs17}. The method proceeds by induction, progressively replacing averages of occupation variables over boxes of increasing size so that, in the final step, the box reaches a microscopic length proportional to the system size. At each step of the procedure, occupation variables redistribute mass across the corresponding box in a uniform way. However, in the present setting, the lack of variable-swap invariance introduces new error terms that must be controlled using bounds on the correlation functions.

As an application, we consider the Katz-Lebowitz-Spohn model \cite{kls84} with a specific choice of parameters. This is an asymmetric exclusion process on the discrete torus with a weak hopping bias of order $N^{-\gamma}$ and with next-nearest-neighbour interaction such that the stationary measure is neither product (namely does not factorise over lattice sites) nor invariant under nearest-neighbour swaps. Using the generalised second-order Boltzmann-Gibbs principle, we show that its equilibrium fluctuations converge to the stationary energy solution of the stochastic Burgers equation. To the best of our knowledge, this constitutes the first derivation of the stochastic Burgers or KPZ equation at stationarity as a scaling limit in such a setting, namely from a discrete model whose stationary measures lack both factorisation and swap invariance. 

In a degenerate limit of our parameter choice, the KLS model reduces to the so-called facilitated exclusion process (FEP) for which convergence of the equilibrium fluctuations to the stochastic Burgers equation was proved in \cite{ez24}. However, the apparent absence of factorisation of the invariant measure of the FEP merely reflects the fact that, at fixed particle number, it is a uniform distribution of dimers \cite{ss04} -- the natural analogue of the uniform distribution of monomers arising from restricting a product measure to configurations with a fixed number of particles. This uniformity property, a direct consequence of factorisation at fixed particle number, allows one to derive the convergence to the stochastic Burgers equation by standard techniques via a mapping to the zero-range process (whose invariant measure is product) \cite{ss04,ez24}. This stands in contrast to the parameter regime studied here, where the stationary measure possesses no analogous uniformity property. Moreover, our result provides the first rigorous derivation of predictions from mode-coupling theory for particle systems with non-product invariant measures. Mode-coupling theory, based on the phenomenological framework of non-linear fluctuating hydrodynamics \cite{bks85}, predicts for weakly driven systems a crossover from the superdiffusive KPZ universality class to the diffusive Edwards–Wilkinson (EW) class, with a critical exponent $\gamma = \tfrac{1}{2}$ at which convergence to the stochastic Burgers equation is conjectured \cite{schu24}, as proved below.

The derivation of the stochastic Burgers equation follows the strategy outlined in \cite{gj14}: we first prove tightness of the sequence of fluctuation fields and then identify all limit points as energy solutions. In our setting, since the invariant measure is not of product form, additional work is required to control the fluctuation field at the initial time. Under the invariant measure, occupation variables are identically distributed but not independent, preventing a direct application of the classical central limit theorem. Nevertheless, our control of correlation decay yields quantitative bounds on the $\alpha$-mixing coefficients, allowing us to apply a generalised central limit theorem for $\alpha$-mixing sequences and thereby establish the desired Gaussian convergence. 

A further technical difficulty stems from the degeneracy of the KLS dynamics considered here, in the sense that the system admits blocked configurations. This phenomenon also appeared in the porous media model studied in \cite{glt09, bgs17}, and the trick therein was to split the configuration space into ``good" and ``bad" sets. The good configurations are those that contain at least a mobile cluster -- a pair of particles at distance at most two -- which allows to construct paths that transport the mass across the whole system. The correlation bounds available here enable us to control contributions on the set of bad configurations. Hence, in order to apply the generalised second-order Boltzmann-Gibbs principle in this degenerate setting, we successfully combine our argument with the techniques developed in \cite{bgs17}. 

We conclude with a few directions for future work. It would be interesting to analyse the extension of our generalised second-order Boltzmann-Gibbs principle to the case when the invariant measures are no longer translation invariant. Another challenging problem concerns the extension of our results to non-gradient systems, where the proof is expected to be technically more involved. For the non-gradient KLS model, we expect analogous macroscopic behaviour. A related non-gradient setting that may allow one to address fluctuations near phase transitions is the locally constrained exclusion process \cite{gsz25}, where the invariant measure undergoes a transition from a metastable clustered phase -- reminiscent of phase separation in systems with long-range interactions \cite{ks17} -- to a poorly understood homogeneous regime approaching the stationary measure of the facilitated exclusion process in a suitable limit.

\begin{acknowledgements} \small{P.G. expresses warm thanks to Fundação para a Ciência e Tecnologia FCT\slash Portugal for financial support through the projects UIDB\slash04459\slash2020, UIDP\slash04459\slash2020 and ERC\slash FCT.  M.C.R. gratefully acknowledges support from the Dean's PhD Scholarship at Imperial College London, the kind hospitality of Instituto Superior Técnico in March-April 2025 (when part of this work was carried out), and G-Research for supporting this visit through a travel grant. G.M.S. acknowledges financial support from FCT through the grant 2020.03953.CEECIND.}
\end{acknowledgements}

\section{Statement of Results}

\subsection{The Generalised Second-Order Boltzmann-Gibbs Principle}\label{subsec:gen_bg}
Our framework is the following. We consider a conservative Markov process $\{\eta_{tN^a}^N, t\ge0\}$ in the accelerated time scale $tN^a$ with generator $N^a\mathcal{L}^N$, where $N$ is a scaling parameter and $a>0$; to ease the notation, we will frequently drop the superscript $N$ and simply denote the process by $\{\eta_{tN^a}, t\ge0\}$. This system evolves the discrete lattice $\Lambda_N$ given by the discrete torus $\mathbb{T}_N$ with $N$ points, which we read as a discretisation of the continuum space $\Lambda$ (given by the one-dimensional torus) scaled by a factor $N$. 
To simplify the exposition, we assume that the process is of exclusion type, so that its state space is given by $\Omega_N:=\{0, 1\}^{\Lambda_N}$; however, our results can readily be adapted to more general processes evolving on $\mathcal{X}^{\Lambda_N}$ where $\mathcal{X}$ is a \textit{bounded} space. For $x, y\in\Lambda_N$, let $\eta^{x, y}$ denote the configuration obtained from $\eta\in\Omega_N$ by swapping the states $\eta(x)$ and $\eta(y)$, namely
\begin{equation}\label{eq:swapped_config}
    \eta^{x, y}(z):=\begin{cases} \eta(y) & z=x, \\ \eta(x) & z=y, \\ \eta(z) & z\ne x, y. \end{cases}
\end{equation}
Observe that, above, we read $\eta^{N,N+1}$ as swapping the variables $\eta(N)$ and $\eta(1)$, as we identify the sites $N+1$ and $1$. Throughout, we will mainly use the notation $\eta_x$ to denote the occupation variable $\eta(x)$.

\subsubsection{Assumptions}
\begin{enumerate}

    \item[] {\bfseries \textsc{Assumption 1} (Stationary Measures).} The model admits a family of stationary measures $\{\nu_{\rho}^N\}_{\rho\in I}$ for some $I\subset\mathbb{R}$, and for each $\rho\in I$ we assume that:
    \begin{enumerate}[i)]
        \item $\nu_\rho^N$ is invariant by translation, so that, denoting by $E_{\nu_\rho^N}[\,\cdot\,]$ the expectation with respect to $\nu_\rho^N$, we have that $E_{\nu_\rho^N}[\eta_x]=:\rho_N$ for each $x\in\Lambda_N$;

        \item the measure $\nu_\rho^N$ is invariant under permutations of nearest-neighbouring coordinates up to a correction of order at most $N^{-\alpha}$ for some $\alpha>0$, and this correction depends on the centred configuration only \textit{locally} and \textit{polynomially} up to an error of order at most $N^{-2\alpha}$: namely, calling $\bar\eta_x$ the centred random variable $\eta_x-\rho_N$, we can write
        \begin{equation}\label{eq:poly_correction}
            \frac{\nu_\rho^N(\eta^{x, x+1})}{\nu_\rho^N(\eta)}=1+N^{-\alpha} \sum_{\Theta\subset\{x-m_1, \ldots, x+m_2\}} c_\Theta \prod_{j\in\Theta}\bar\eta_j+O(N^{-2\alpha}),
        \end{equation}
        where $m_1\ge0, m_2\ge1$ and $\{c_\Theta\}_\Theta$ are constants independent of $N$.
    
    \end{enumerate}
\end{enumerate}

This assumption may seem restrictive, but is actually satisfied by a large class of models; for example, any Gibbs measures of a Hamiltonian with short-range polynomial interactions and with temperature scaling as $N^\alpha$ satisfies \eqref{eq:poly_correction}. As we shall see, we can get a better bound for models for which this polynomial has an additional property:
\begin{enumerate}

    \item[] {\bfseries \textsc{Assumption 1a} (Stationary Measures).} The model satisfies \textbf{\textsc{Assumption 1}}, and the expression in \eqref{eq:poly_correction} is such that, defining the coefficients $\{c_\Theta'\}_{\Theta}$ via
    \begin{equation}\label{eq:poly_correctionA}
        \begin{split}\left[\frac{\nu_\rho^N(\eta^{x, x+1})}{\nu_\rho^N(\eta)}-1\right](\bar\eta_x-\bar\eta_{x+1})&=:N^{-\alpha} \sum_{\Theta\subset\{x-m_1, \ldots, x+m_2\}} c_\Theta' \prod_{j\in\Theta}\bar\eta_j
        \\&\phantom{:=}+O(N^{-2\alpha}),
        \end{split}
    \end{equation}
    these satisfy $\{\Theta: \Theta\ne\varnothing \wedge c_\Theta'\ne0\}\ne\varnothing$. 

\end{enumerate}

The term on the left-hand side of \eqref{eq:poly_correctionA} will naturally appear in the proof of our main results, as a consequence of the fact that the underlying measure is not invariant for the swap of variables $\eta\mapsto \eta^{x,x+1}$, which means that every time we make that change of variables, we have a price to pay which is measured in the last assumption. Therefore, this term will appear repeatedly in the proof of our Boltzmann-Gibbs principle, namely Theorem \ref{thm:gen_bg}. In essence, \textbf{\textsc{Assumption 1a}} asserts not only that this expression decays as $N^{-\alpha}$, while being local and polynomial in the centred occupation variables, but also that its constant term vanishes. Consequently, when multiplying two such expressions, provided the sites involved are sufficiently far apart, no constant term survives and only products of occupation variables remain. Together with suitable decay estimates for correlations, this feature allows us to sharpen certain upper bounds. Our next assumption then concerns the asymptotic behaviour of the $m$-point correlation functions under the stationary measures $\{\nu_\rho^N\}_{\rho\in I}$.
\begin{enumerate}

    \item[] {\bfseries \textsc{Assumption 2} (Correlation Functions).} For each $\rho\in I$ and for each integer $m\ge2$, the $m$-point correlation functions of $\nu_\rho^N$ decay at least as fast as $N^{-\beta}$, namely, for each $x_1, \ldots, x_{m}$ in $\Lambda_N$ pairwise distinct,
    \begin{equation*}
        \left|E_{\nu_\rho^N}\left[\prod_{j=1}^{m}\bar\eta_{x_j}\right]\right| \lesssim N^{-\beta}
    \end{equation*}
    for some $\beta>0$.

\end{enumerate}

We note that, above, the decay of the $m$-point correlation function is given in terms of a parameter $\beta$ which in fact usually depends on the value of $m$, as the higher the value of $m$, the faster is the decay to zero of the correlation function; however, here we do not need to precise such dependence. 

Now let $\mathscr{D}_N(f, \nu)$ denote the \textit{Dirichlet form} of the process $\{\eta_t^N, t\ge0\}$ with respect to {a measure $\nu$ on $\Omega_N$}, defined by
\begin{equation}\label{eq:def_dirichlet}
    \mathscr{D}_N(f, \nu) := -\int_{\Omega_N}f\mathcal{L}^N f\de \nu(\eta),
\end{equation} and, for each $x\in\Lambda_N$, let $\tau_x$ denote the translation operator by $x$, so that $\tau_x\eta_\cdot=\eta_{\cdot+x}$.

\begin{enumerate}

    \item[] {\bfseries \textsc{Assumption 3} (Dirichlet Form).} There exists a function $q_{0, 1}^N:\Omega_N\to[\delta, \frac{1}{\delta}]$ with $\delta>0$ such that, for each $\rho\in I$, the Dirichlet form $\mathscr{D}_N(f, \nu_\rho^N)$ can be written as
    \begin{equation*}
        \mathscr{D}_N(f, \nu_\rho^N) = \sum_{x\in\Lambda_N} 
        \int_{\Omega_N} q_{x, x+1}^N(\eta)\left[f(\eta^{x, x+1})-f(\eta)\right]^2\de \nu_\rho^N(\eta),
    \end{equation*}
    where $q_{x, x+1}^N(\eta):=q_{0, 1}^N(\tau_x\eta)$.

\end{enumerate}

\subsubsection{Main Result}
Throughout, we will denote by $\mathbb{P}_{\nu_\rho^N}$ the probability measure on the space of càdlàg trajectories from $[0, T]$ to $\Omega_N$ corresponding to the process $\{\eta_t^N, t\in[0, T]\}$ started at its stationary measure $\nu_\rho^N$, and by $\expected_{\nu_\rho^N}[\,\cdot\,]$ expectations with respect to $\mathbb{P}_{\nu_\rho^N}$. 

Given a function $G:\Lambda\to\mathbb{R}$, let
\begin{equation*}
    \|G\|^2_{2, N}:=\frac{1}{N}\sum_{x\in\Lambda_N}G\left(\frac{x}{N}\right)^2.
\end{equation*}
Also, given an integer $1\le L\le N$ {and $x\in\Lambda_N$, we denote} the left and right averages over boxes of size $L$ as
\begin{equation*}
    \cev{\eta}^{L}(x):=\frac{1}{L}\sum_{y=x-L}^{x-1}\bar\eta_y\ \ \ \text{and} \ \ \ \vec{\eta}^{L}(x):=\frac{1}{L}\sum_{y=x+1}^{x+L}\bar\eta_y,
\end{equation*}
respectively, where we read elements modulo $N$, so that for instance $\cev{\eta}^L(1)=\frac{1}{L}(\bar\eta_{N+1-L}+\ldots+\bar\eta_{N})$. Throughout, even if $L$ is not an integer, we will keep the notation $L$ in the superscript to denote $\lfloor L\rfloor$, {and we will interchangeably denote $\cev{\eta}^{L}(x)$ and $\vec{\eta}^{L}(x)$ by $\cev{\eta}^{L}_x$ and $\vec{\eta}^{L}_x$, respectively.}

\begin{theorem}[Generalised Second-Order Boltzmann-Gibbs Principle]\label{thm:gen_bg} Let \textbf{\textsc{Assumptions 1, 2}} and \textbf{\textsc{3}} hold. 
\begin{enumerate}[i)]

    \item There exists a constant $C=C(\rho)>0$ such that, for any $1\le L\le N$ and $t>0$ and for any function $G:\Lambda\to\mathbb{R}$ such that $\|G\|_{2, N}^2<\infty$,
    \begin{align}
        &\expected_{\nu_\rho^N}\left[\left(\int_0^t\sum_{x\in \Lambda_N}G\left(\frac{x}{N}\right)\left\{\bar\eta_{sN^a}(x)\bar\eta_{sN^a}(x+1)-\cev{\eta}_{sN^a}^L(x)\vec{\eta}_{sN^a}^L(x)\right\}\de s\right)^2\right]\nonumber
        \\&\le Ct\|G\|_{2, N}^2L\left\{\frac{1}{N^{a-1}}+\frac{1}{N^{a+\beta-2}}+\frac{t}{N^{2\alpha-2}}+\frac{t}{N^{2\alpha+\beta-3}}+\frac{t}{N^{3\alpha-3}}\right\};\label{eq:bg_bound}
    \end{align}

    \item if, additionally, \textbf{\textsc{Assumption 1a}} holds, then the bound can be improved to 
    \begin{equation}\label{eq:bg_boundA}
        Ct\|G\|_{2, N}^2L \bigg\{\frac{1}{N^{a-1}}+\frac{1}{N^{a+\beta-2}}+\frac{t}{N^{2\alpha-1}}+\frac{t}{N^{2\alpha+\beta-3}}+\frac{t}{N^{3\alpha-3}}\bigg\}. 
    \end{equation}

\end{enumerate}
\end{theorem}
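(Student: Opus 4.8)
The plan is to prove the bound by a multi-scale ("doubling") argument, reducing the gap between the microscopic product $\bar\eta_x\bar\eta_{x+1}$ and the box-averaged product $\cev{\eta}^L_x\vec{\eta}^L_x$ in a geometric sequence of steps, at each of which we only need to control a single replacement of scale $\ell$ by scale $2\ell$. Concretely, I would first telescope: write the difference $\bar\eta_x\bar\eta_{x+1}-\cev{\eta}^L_x\vec{\eta}^L_x$ as a sum over dyadic scales $\ell=1,2,4,\ldots$ up to $L$ of elementary increments of the form $\cev{\eta}^\ell_x\vec{\eta}^\ell_x-\cev{\eta}^{2\ell}_x\vec{\eta}^{2\ell}_x$ (plus a first, purely-local step comparing $\bar\eta_x\bar\eta_{x+1}$ with $\cev{\eta}^1_x\vec{\eta}^1_x$, which are in fact equal up to indexing so this step is trivial). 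By Minkowski's inequality in $L^2(\mathbb{P}_{\nu_\rho^N})$, the left-hand side is bounded by $(\sum_\ell \|\cdot\|_{L^2})^2$ with roughly $\log_2 L$ terms; the factor $L$ in the stated bound will ultimately come from this sum of $\log_2 L$ pieces whose individual contributions themselves carry an $\ell$ (so $\sum_\ell \ell \lesssim L$), as is standard in this method.

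For a single dyadic step, the key tool is the Kipnis--Varadhan inequality: for a mean-zero (under $\nu_\rho^N$) local function $V_\ell(\eta)$,
\begin{equation*}
    \expected_{\nu_\rho^N}\left[\left(\int_0^t \sum_x G(x/N)\, \tau_x V_\ell(\eta_{sN^a})\, \de s\right)^2\right] \lesssim \frac{t}{N^a}\, \big\| \textstyle\sum_x G(x/N)\,\tau_x V_\ell \big\|_{-1,N}^2,
\end{equation*}
where the $H_{-1}$ norm is the variational quantity $\sup_f \{ 2\langle \sum_x G(x/N)\tau_x V_\ell, f\rangle_{\nu_\rho^N} - N^a \mathscr{D}_N(f,\nu_\rho^N)\}$. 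Here $V_\ell = \cev{\eta}^\ell\vec{\eta}^\ell - \cev{\eta}^{2\ell}\vec{\eta}^{2\ell}$ is, up to the correlation-sized corrections from the non-invariance of $\nu_\rho^N$, a sum of terms of the form $g(\eta)-g(\eta^{z,z+1})$; the Dirichlet form in \textbf{\textsc{Assumption 3}} with its uniform ellipticity $q^N_{z,z+1}\in[\delta,1/\delta]$ is exactly what lets us estimate the variational problem, choosing a test "flow" $f$ that redistributes the discrete mass across the box of size $2\ell$ uniformly. A careful bookkeeping of the flow gives the diffusive gain $\ell/N^{a}$ per step together with the spatial weight $\|G\|_{2,N}^2$, producing the first term $1/N^{a-1}$ and — because the averaged product $\cev\eta\vec\eta$ also contains genuinely off-diagonal pairs $\bar\eta_y\bar\eta_{y'}$ with $y\ne y'$ whose mean is of size $N^{-\beta}$ by \textbf{\textsc{Assumption 2}} rather than exactly zero — the term $1/N^{a+\beta-2}$.

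The genuinely new feature, and the main obstacle, is that $\nu_\rho^N$ is not invariant under the swap $\eta\mapsto\eta^{z,z+1}$, so the "integration by parts" used to rewrite increments as Dirichlet-form-controllable telescoping sums is only approximate: every change of variables $\eta\mapsto\eta^{z,z+1}$ inside an expectation costs a factor $\nu_\rho^N(\eta^{z,z+1})/\nu_\rho^N(\eta) = 1 + N^{-\alpha}P_z(\bar\eta) + O(N^{-2\alpha})$ with $P_z$ the local polynomial of \eqref{eq:poly_correction}. I would isolate these correction terms explicitly: each generates, schematically, an extra additive functional of the form $\int_0^t \sum_x G(x/N)\, N^{-\alpha} (\text{local polynomial}) \cdot(\text{box average})\,\de s$, and one has to bound its $L^2$ norm either crudely by Cauchy--Schwarz in time (yielding a $t^2$, i.e. the extra $t$, and a factor $N^{-2\alpha}$) after using \textbf{\textsc{Assumption 2}} to show the integrand has small $L^2(\nu_\rho^N)$ norm — this is where $t/N^{2\alpha-2}$, $t/N^{2\alpha+\beta-3}$ come from — or, when the correction polynomial itself happens to have no constant term after multiplication (this is precisely the content of \textbf{\textsc{Assumption 1a}}, guaranteeing that the product of two such corrections, at well-separated sites, contains only genuine monomials in the $\bar\eta$'s), one gets the extra gain of $N^\alpha$ in \eqref{eq:bg_boundA} because a further correlation bound $N^{-\beta}$ or simply a mean-zero cancellation applies where otherwise a term of order $1$ would have survived. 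The $N^{-\alpha}$-corrections arising at consecutive scales can also compound, giving rise to the $t/N^{3\alpha-3}$ contribution (two correction factors, each $N^{-\alpha}$, against a background that is already one correction deep, or equivalently a triple product of corrections whose net order after using one correlation estimate is $N^{-3\alpha+\ldots}$). I would organise the proof so that the dyadic-step lemma is stated once with these five error contributions made explicit, and then part (i) follows by summing over $\log_2 L$ scales and applying Minkowski, with part (ii) being the same computation where \textbf{\textsc{Assumption 1a}} upgrades the $t/N^{2\alpha-2}$ term to $t/N^{2\alpha-1}$; the hard part, to be carried out with care, is the precise combinatorial tracking of how many correction factors each error term carries and the verification that the uniform ellipticity of $q^N$ suffices — with no spectral gap and no equivalence of ensembles — to close the variational estimate at every scale.
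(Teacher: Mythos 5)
Your proposal follows essentially the same route as the paper: a multi-scale replacement scheme over dyadic boxes, each step controlled by the Kipnis--Varadhan variational inequality together with the Dirichlet form of \textbf{\textsc{Assumption 3}}, with the non-invariance under swaps isolated as an $N^{-\alpha}$ correction that is bounded by Cauchy--Schwarz in time (producing the extra factor of $t$) and the correlation decay of \textbf{\textsc{Assumption 2}}, and with \textbf{\textsc{Assumption 1a}} removing the constant term in the product of two corrections to upgrade $t/N^{2\alpha-2}$ to $t/N^{2\alpha-1}$. The only organisational difference is that the paper does not telescope dyadically from scale $1$: it first passes from single sites to an intermediate box of size $\ell_0=L^{1/3}$ via a one-block estimate and only then doubles up to $L$, which changes nothing essential. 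One small inaccuracy to fix: $\cev{\eta}^{1}_x\vec{\eta}^{1}_x=\bar\eta_{x-1}\bar\eta_{x+1}\neq\bar\eta_x\bar\eta_{x+1}$, so your base step is not trivial --- replacing $\bar\eta_x$ by a left average is itself a (short) one-block replacement, handled in the paper by the terms \eqref{eq:term_two}, \eqref{eq:term_four} and \eqref{eq:term_five} --- but it is of exactly the type your machinery already covers, so this is an oversight rather than a gap.
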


\begin{remark}
{Note that the bounds \eqref{eq:bg_bound} and \eqref{eq:bg_boundA} are almost the same except for the middle term, whose denominator improves from $N^{2\alpha-2}$ to $N^{2\alpha-1}$. Moreover, we emphasise that the first term in both bounds is the same as in the original result of \cite{gj14, gjs17}; all the additional terms come from the non-vanishing correlations and the fact that the stationary measures are not (necessarily) invariant for the swap of occupation variables.}
\end{remark}

As a straightforward corollary, we recover an analogue of the result in \cite{gjs17} for diffusive models {($a=2$)}, which allows us to ``reach" a box of size $\eps N$:

\begin{corollary}\label{corol:gen_bg} Let \textbf{\textsc{Assumptions 1, 2}} and \textbf{$\textsc{3}$} hold and assume $a=2$.
\begin{enumerate}[i)]

    \item If $\alpha\ge\frac{3}{2}$ and $\beta\ge 1$, for each $t>0$ and $G:\Lambda\to\mathbb{R}$ such that $\|G\|_{2, N}^2<\infty$, we have that
    \begin{equation}\label{eq:bg_limit}
        \begin{split}&\mylim_{\eps\to0}\mylim_{N\to\infty}\expected_{\nu_\rho^N}\Bigg[\bigg(\int_0^t\sum_{x\in \Lambda_N}G\left(\frac{x}{N}\right)\big\{\bar\eta_{sN^2}(x)\bar\eta_{sN^2}(x+1)
        \\&\phantom{\mylim_{\eps\to0}\mylim_{N\to\infty}\expected_{\nu_\rho^N}\Bigg[}-\cev{\eta}_{sN^2}^{\eps N}(x)\vec{\eta}_{sN^2}^{\eps N}(x)
        \big\}\de s\bigg)^2\Bigg]=0;
        \end{split}
    \end{equation}

    \item if, additionally, \textbf{\textsc{Assumption 1a}} holds, then \eqref{eq:bg_limit} holds true under the weaker conditions $\alpha\ge \frac{4}{3}, \beta\ge1$ and $2\alpha+\beta\ge4$.

\end{enumerate}
\end{corollary}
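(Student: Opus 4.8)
The plan is to deduce the corollary directly from Theorem~\ref{thm:gen_bg}, specialising $a=2$, choosing $L=\lfloor\eps N\rfloor$, and then simply tracking the powers of $N$ that survive.

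For part i), I would start from the bound \eqref{eq:bg_bound}. Setting $a=2$ and using $L=\lfloor\eps N\rfloor\le\eps N$, the prefactor $L$ times the five-term bracket is at most
\[
  \eps\Big\{1+N^{1-\beta}+t\,N^{3-2\alpha}+t\,N^{4-2\alpha-\beta}+t\,N^{4-3\alpha}\Big\}.
\]
I would then observe that under $\alpha\ge\tfrac32$ and $\beta\ge1$ every non-constant exponent is non-positive: $1-\beta\le0$ and $3-2\alpha\le0$ are immediate, adding them gives $2\alpha+\beta\ge4$ hence $4-2\alpha-\beta\le0$, and $3\alpha\ge\tfrac92>4$ gives $4-3\alpha<0$. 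Thus the bracket is bounded by some $C'(1+t)$ uniformly in $N$, so writing $(\ast)$ for the time integral in \eqref{eq:bg_limit} we get
\[
  \mylimsup_{N\to\infty}\expected_{\nu_\rho^N}\!\big[(\ast)^2\big]\;\le\;C''\,t(1+t)\,\eps\,\mylimsup_{N\to\infty}\|G\|_{2,N}^2 .
\]
Since $\|G\|_{2,N}^2$ stays bounded as $N\to\infty$ (being Riemann sums that converge to $\int_\Lambda G^2$ for the test functions considered) and the left-hand side is non-negative, sending $\eps\to0$ would give \eqref{eq:bg_limit}.

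For part ii), I would run the identical argument from the sharper bound \eqref{eq:bg_boundA}; the only change is that the third term now carries $N^{2\alpha-1}$ in the denominator instead of $N^{2\alpha-2}$, so after multiplying by $L\le\eps N$ the bracket reads $1+N^{1-\beta}+t\,N^{2-2\alpha}+t\,N^{4-2\alpha-\beta}+t\,N^{4-3\alpha}$. The weaker hypotheses $\alpha\ge\tfrac43$, $\beta\ge1$ and $2\alpha+\beta\ge4$ still make all non-constant exponents non-positive --- $\alpha\ge\tfrac43$ gives $2-2\alpha\le-\tfrac23<0$ and $4-3\alpha\le0$, $\beta\ge1$ gives $1-\beta\le0$, and $4-2\alpha-\beta\le0$ is the third hypothesis --- and the conclusion follows exactly as before.

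I do not expect any real obstacle: the analytic content is entirely in Theorem~\ref{thm:gen_bg}, and this corollary is pure bookkeeping of exponents. The only points that would need a sentence of care are the uniform-in-$N$ control of $\|G\|_{2,N}^2$ and the reading of the iterated limit $\mylim_{\eps\to0}\mylim_{N\to\infty}$ as $\mylim_{\eps\to0}\mylimsup_{N\to\infty}$, which non-negativity then promotes to a genuine limit equal to zero.
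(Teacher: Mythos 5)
Your proposal is correct and is precisely the argument the paper intends: the text immediately following the corollary states that the thresholds come from substituting $L=\eps N$ and $a=2$ into \eqref{eq:bg_bound} and \eqref{eq:bg_boundA} and checking the exponents, which is exactly your bookkeeping (and your exponent checks are all accurate). The two small caveats you flag --- boundedness of $\|G\|_{2,N}^2$ in $N$ and reading the iterated limit via a $\mylimsup$ --- are the right ones and are handled correctly.
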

The thresholds for $\alpha$ and $\beta$ in Corollary \ref{corol:gen_bg} simply come from an optimisation of the bounds \eqref{eq:bg_bound} and \eqref{eq:bg_boundA} with the choice $L=\eps N$. We note that reaching a box of size $L=\eps N$ when $a=2$ is crucial in the argument, as for that specific size of the microscopic box, the term involving a product of averages macroscopically becomes a non-trivial observable of the model.

\begin{remark} While our results are stated for $\Lambda_N$ being the discrete torus with $N$ sites, we note that they can readily be adapted to the cases where $\Lambda_N$ is the segment with $N$ points or the whole lattice $\mathbb{Z}$. The reader can retrace all steps of the proof in Section \ref{sec:proof_gen_bg} and will find that the arguments remain analogous.
\end{remark}

\subsection{Application to the KLS Model}
Our second contribution is the application of Theorem \ref{thm:gen_bg} to a specific model which does not satisfy the assumptions of the classical second-order Boltzmann-Gibbs principle \cite{gj14, gjs17}, as its stationary measures are neither product nor invariant for particle jumps. In particular, we show that the equilibrium fluctuations of the Katz-Lebowitz-Spohn (KLS) model, for a given choice of parameters, converge to an appropriately defined solution of the stochastic Burgers equation.

\subsubsection{Dynamics and Invariant Measure}\label{subsec:dynamics_measure}
The \textit{KLS model}, introduced in \cite{kls84}, is a Markov process of exclusion type evolving on  $\mathbb{T}_N$ -- so that its state space is $\Omega_N:=\{0,1\}^{\mathbb T_N}$ -- whose generator acts on functions $f:\Omega_N\to\mathbb{R}$ via
\begin{equation*}
    \mathcal{L}^Nf(\eta)=\sum_{x\in\mathbb{T}_N}c_{x, x+1}^N(\eta)[f(\eta^{x, x+1})-f(\eta)],
\end{equation*}
where $\eta^{x, x+1}$ is defined in \eqref{eq:swapped_config} and where
\begin{equation}\label{eq:rates}
    \begin{split}&c_{x,x+1}^N(\eta):=
    \\&p_N\eta_x(1-\eta_{x+1})\{(1+\kappa_N)(1-\eta_{x-1}+\eta_{x+2})+(1+\eps_N)\eta_{x-1}+(1-\eps_N)\eta_{x+2}\}
    \\&+q_N\eta_{x+1}(1-\eta_x)\{(1+\kappa_N)(1-\eta_{x-1}+\eta_{x+2})+(1-\eps_N)\eta_{x-1}+(1+\eps_N)\eta_{x+2}\}.
    \end{split}
\end{equation}
Here, $p_N, q_N\ge0$ and $\eps_N, \kappa_N\in[-1, 1]$. The dynamics can be described as follows: a particle at $x$ waits, independently of the others, a random exponential time of parameter 1, after which it jumps to a neighbouring site $y=x\pm 1$ with rate $c^N_{x, y}(\eta)$; in particular, if the site is unoccupied, then the jump is suppressed. 

For any choice of the parameters $p_N, q_N, \eps_N$ and $\kappa_N$, the Gibbs measure
\begin{equation}\label{eq:nuN}
    \nu_N(\eta): \propto \left(\frac{1+\eps_N}{1-\eps_N}\right)^{-H(\eta)}
\end{equation}
with Hamiltonian
\begin{equation*}
    H(\eta):=\sum_{y\in\mathbb{T}_N}\eta_y\eta_{y+1}
\end{equation*}
is invariant for the dynamics; see \cite[Section 4.1]{ns25}.

Note that, for $\kappa_N=-1$ and $\eps_N=0$, this corresponds to the \textit{porous media model}, namely the model with degenerate rates and  whose hydrodynamic limit in the symmetric case and evolving on $\mathbb{T}_N$ was studied in \cite{glt09}, and equilibrium  fluctuations for the system evolving on $\mathbb{Z}$ was studied in \cite{bgs17}. For this model, the authors of \cite{bgs17} -- after showing that one can restrict to an appropriate set of configurations -- applied the second-order Boltzmann-Gibbs Principle of \cite{gjs17} and showed that, under diffusive scaling ($a=2$), the equilibrium density fluctuations converge to the stationary energy solution of the stochastic Burgers equation. Indeed, if $\eps_N=0$, then the measure \eqref{eq:nuN} becomes the uniform measure, and indeed one can show that, both on $\mathbb{Z}$ and $\mathbb T_N$, the Bernoulli product measures of any parameter $\rho\in(0, 1)$ are invariant for the dynamics, so that the model falls into the assumptions of \cite{gjs17}. However, when $\eps_N\ne0$, \eqref{eq:nuN} is neither product nor invariant for particle jumps, so that those assumptions fail. On the other hand, when
$\kappa_N=-1$ and $\eps_N=1$, the KLS model reduces to the
facilitated exclusion process and, as previously discussed, the measure is then uniform for any fixed particle number (interpreted as dimers), so that -- up to the technicality of equivalence of ensembles -- this model also effectively falls into the assumptions of \cite{gjs17}.

We consider the KLS model evolving on $\mathbb{T}_N$ where we set 
\begin{equation}\label{eq:params}
    \kappa_N:=-1, \ \ \ p_N:=\frac{1}{2}+\frac{b}{2N^\gamma}, \ \ \ q_N:=\frac{1}{2}-\frac{b}{2N^\gamma}, \ \ \ \eps_N:=\frac{1-e^{-N^{-\alpha}}}{1+e^{-N^{-\alpha}}}
\end{equation}
with $b\in\mathbb{R}$ and $\gamma, \alpha>0$; the dynamics for the choice $\kappa_N=-1$ are represented in Figure \ref{fig:kls}.

\vspace{.8em}
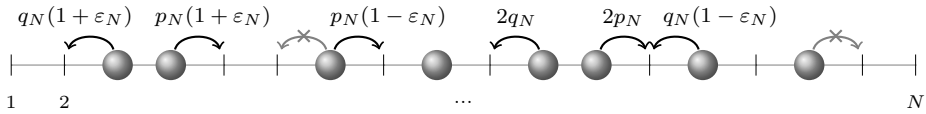
\begin{figure}[!ht]
\begin{center}
\begin{tikzpicture}[thick, scale=0.7][!ht]    
    \draw[step=1cm,gray,very thin] (-9,0) grid (8,0);   
    \foreach \y in {-9,...,8}{
    \draw[color=black,very thin] (\y,-0.2)--(\y,0.2);}    
    \draw (-9,-0.7) node [color=black] {$\scriptstyle{1}$};
    \draw (-8,-0.7) node [color=black] {$\scriptstyle{2}$};
    \draw (-0.5,-0.7) node [color=black] {$\scriptstyle{...}$};
    \draw (8,-0.7) node [color=black] {$\scriptstyle{N}$};

    \node[shape=circle, minimum size=0.4cm] (1) at (-9,0) {};    
    \node[shape=circle, minimum size=0.4cm] (2) at (-8,0) {};
    \node[ball color=black!30!, shape=circle, minimum size=0.4cm] (3) at (-7,0) {};
    \node[ball color=black!30!, shape=circle, minimum size=0.4cm] (4) at (-6,0) {};
    \node[shape=circle, minimum size=0.4cm] (5) at (-5,0) {};
    \node[shape=circle, minimum size=0.4cm] (6) at (-4,0) {};
    \node[ball color=black!30!, shape=circle, minimum size=0.4cm] (7) at (-3,0) {};
    \node[shape=circle, minimum size=0.4cm] (8) at (-2,0) {};
    \node[ball color=black!30!, shape=circle, minimum size=0.4cm] (9) at (-1,0) {};
    \node[shape=circle, minimum size=0.4cm] (10) at (0,0) {};
    \node[ball color=black!30!, shape=circle, minimum size=0.4cm] (11) at (1,0) {};
    \node[ball color=black!30!, shape=circle, minimum size=0.4cm] (12) at (2,0) {};
    \node[shape=circle, minimum size=0.4cm] (13) at (3,0) {};
    \node[ball color=black!30!, shape=circle, minimum size=0.4cm] (14) at (4,0) {};
    \node[shape=circle, minimum size=0.4cm] (15) at (5,0) {};
    \node[ball color=black!30!, shape=circle, minimum size=0.4cm] (16) at (6,0) {};
    \node[shape=circle, minimum size=0.4cm] (17) at (7,0) {};
    \node[shape=circle, minimum size=0.4cm] (18) at (8,0) {};
  
    \path [<-] (2) edge[bend left=75] node[above] {{\footnotesize{$q_N(1+\eps_N)\;\;\;\;\;$}}} (3);
    \path [->] (4) edge[bend left=75] node[above] {{\footnotesize{$\;\;\;\;\;p_N(1+\eps_N)$}}} (5);
    \path [<-] (6) edge[bend left=75, color=black!50!] node[above] {} (7);
    \draw (-3.5,0.55) node[cross] [color=black!50!] {};
    \path [->] (7) edge[bend left=75] node[above] {{\footnotesize{$\;\;\;\;\;\;\;\;\;\;p_N(1-\eps_N)$}}} (8);
    \path [<-] (10) edge[bend left=75] node[above] {{\footnotesize{$2q_N$}}} (11);
    \path [->] (12) edge[bend left=75] node[above] {{\footnotesize{$2p_N$}}} (13);
    \path [<-] (13) edge[bend left=75] node[above] {{\footnotesize{$\;\;\;\;\;\;\;\;\;\;\;\;\;\;q_N(1-\eps_N)$}}} (14);
    \path [->] (16) edge[bend left=75, color=black!50!] node[above] {} (17);
    \draw (6.5,0.55) node[cross] [color=black!50!] {};
    
\end{tikzpicture}
\caption{Microscopic dynamics of the KLS model with $\kappa_N=-1$.}\label{fig:kls}
\end{center}
\end{figure}

Then, the invariant measure reads
\begin{equation}\label{eq:nuN_alpha}
    \nu_\alpha^N(\eta):=Z_{N, \alpha}^{-1} e^{-N^{-\alpha}H(\eta)}
\end{equation}
where $Z_{N, \alpha}$ is the partition function, given by
\begin{equation}\label{eq:partition}
    Z_{N, \alpha}=\int_{\Omega_N}e^{-N^{-\alpha}H(\eta)}.
\end{equation}
A simple proof of the stationarity of $\nu_N^\alpha$ is given in Appendix \ref{sec:appendix}. Throughout, we will denote by $E_{\nu_N^\alpha}[\,\cdot\,]$ expectations with respect to this measure.

\begin{remark} The specific choice of $\eps_N$ is only out of convenience; our method actually works for any choice of $\eps_N$ such that $|\eps_N|\lesssim N^{-\alpha}$.
\end{remark}

Roughly speaking, the dynamics are those of a weakly asymmetric exclusion process on the discrete torus but with jumps from $x$ that can only be performed if at least one site ``close'' to $x$ is occupied, and in particular
\begin{enumerate}[i)]

    \item for right jumps, the rate $p_N$ gets {enhanced} if at least the site to the left of $x$ is occupied, and {reduced} if only that at distance $2$ to the right of $x$ is occupied;

    \item for left jumps, the rate $q_N$ gets {enhanced} if at least the site at distance $2$ to the right of $x$ is occupied, and {reduced} if only that to the left of $x$ is occupied;

\end{enumerate}
if both are unoccupied, the jump is suppressed. In fact, the choice $\kappa_N=-1$ makes the dynamics particularly interesting because in this case, as just highlighted, there are ``blocked" configurations which need to be studied, just as in the porous media model. On the other hand, if $\kappa_N\ne1$, one can check that the symmetric part of the current \textit{cannot} be expressed as a gradient. Thus, the model would lie outside the standard framework and would require techniques different from those presented here.

\subsubsection{Stationary Energy Solutions of the Stochastic Burgers Equation}\label{sec:energy_solutions}
In order to state our convergence result, we first recall the notion of stationary energy solutions of the stochastic Burgers equation, introduced in \cite{gj14}. The definitions and results that follow are taken from \cite{gj14, gjs17, gp18}.

Fix a finite time horizon $[0, T]$ and let {$\mathfrak{D}(\mathbb{T})$} denote the set of real-valued smooth functions on the torus $\mathbb{T}$, and $\mathcal{C}([0, T], \mathfrak{D}'(\mathbb{T}))$ the space of continuous paths taking values in the dual $\mathfrak{D}'(\mathbb{T})$. 

\begin{definition}[Stationarity] We say that a process $\{\mathscr{Y}_t, t\in[0, T]\}$ with trajectories in $\mathcal{C}([0, T], \mathfrak{D}'(\mathbb{T}))$ is \textit{stationary} if, for each $t\in[0, T]$, the $\mathfrak{D}'(\mathbb{T})$-valued random variable $\mathscr{Y}_t$ is a white noise of fixed variance.
\end{definition}

Let $\{\iota_\eps\}_{\eps\in(0, 1)}$ denote an approximation of the identity on $\mathbb{T}$: for each $\phi\in\mathfrak{D}(\mathbb{T})$, we define the process $\{\mathscr{B}_t^\eps, t\in[0, T]\}$ via
\begin{equation*}
    \mathscr{B}_t^\eps(\phi):=\int_0^t\int_{\mathbb{T}}\left(\mathscr{Y}_s\ast \iota_\eps(u)\right)^2\nabla\phi(u)\de u\de s,
\end{equation*}
where $\ast$ denotes the convolution operator. Moreover, for $0\le s\le t\le T$ and $\phi\in\mathfrak{D}(\mathbb{T})$, define $\mathscr{B}_{s, t}^\eps(\phi):=\mathscr{B}_t^\eps(\phi)-\mathscr{B}_s^\eps(\phi)$. We will also denote by $\|\cdot\|_{L^2(\mathbb{T})}$ the norm defined by \begin{equation*}
    \|\phi\|_{L^2(\mathbb{T})}^2:=\int_\mathbb{T}\phi(u)^2\de u
\end{equation*} 
for $\phi\in\mathfrak{D}(\mathbb{T})$.

\begin{definition}[Energy Estimate]\label{def:energy_estimate} We say that a stochastic process $\{\mathscr{Y}_t, t\in[0, T]\}$ with trajectories in $\mathcal{C}([0, T], \mathfrak{D}'(\mathbb{T}))$ satisfies an \textit{energy estimate} if there exists a finite constant $\kappa>0$ such that, for any $0\le s\le t\le T$, any $0<\delta\le\eps<1$ and any $\phi\in\mathfrak{D}(\mathbb{T})$,
\begin{equation*}
    \expected\left[\left(\mathscr{B}_{s, t}^\eps(\phi)-\mathscr{B}_{s, t}^\delta(\phi)\right)^2\right]\le \kappa \eps(t-s)\|\nabla\phi\|_{L^2(\mathbb{T})}^2.
\end{equation*}   
\end{definition}

The following holds.

\begin{proposition} Let $\{\mathscr{Y}_t, t\in[0, T]\}$ be a stochastic process with trajectories in the space $\mathcal{C}([0, T], \mathfrak{D}'(\mathbb{T}))$. Assume that $\{\mathscr{Y}_t, t\in[0, T]\}$ is stationary and satisfies an energy estimate. Then the process $\{\mathscr{B}_t, t\in[0, T]\}$ taking values in $\mathfrak{D}'(\mathbb{T})$ and given by
\begin{equation}\label{eq:mathscr_B}
    \mathscr{B}_t(\phi):=\mylim_{\eps\to0}\mathscr{B}_t^\eps(\phi)
\end{equation}
is well defined and, for any $0\le s\le t\le T$ and any $\phi\in\mathfrak{D}(\mathbb{T})$, it satisfies the estimate
\begin{equation*}
    \expected\left[\left(\mathscr{B}_t(\phi)-\mathscr{B}_s(\phi)\right)^2\right]\le \tilde\kappa \eps(t-s)^{\frac{3}{2}}\|\nabla\phi\|_{L^2(\mathbb{T})}^2
\end{equation*}
for some constant $\tilde\kappa>0$.
\end{proposition}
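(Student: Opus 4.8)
The plan is to show that $\mathscr{B}_t^\eps(\phi)$ is Cauchy in $L^2(\mathbb{P})$ as $\eps\to0$ for each fixed $t$ and $\phi$, so that the limit \eqref{eq:mathscr_B} exists, and then to pass the energy-estimate bound to the limit. First I would use the energy estimate of Definition \ref{def:energy_estimate} with $s=0$: it gives, for $0<\delta\le\eps<1$,
\begin{equation*}
    \expected\left[\left(\mathscr{B}_t^\eps(\phi)-\mathscr{B}_t^\delta(\phi)\right)^2\right]\le \kappa\,\eps\, t\,\|\nabla\phi\|_{L^2(\mathbb{T})}^2 .
\end{equation*}
To deduce that $\{\mathscr{B}_t^{\eps}(\phi)\}_{\eps}$ is Cauchy, I would run the standard dyadic argument: set $\eps_n:=2^{-n}$ and bound $\expected[(\mathscr{B}_t^{\eps_{n+1}}(\phi)-\mathscr{B}_t^{\eps_n}(\phi))^2]^{1/2}\le (\kappa t)^{1/2}\|\nabla\phi\|_{L^2(\mathbb{T})}\,2^{-n/2}$, which is summable in $n$; hence $\mathscr{B}_t^{\eps_n}(\phi)$ converges in $L^2(\mathbb{P})$ to some limit $\mathscr{B}_t(\phi)$, and interpolating an arbitrary $\eps\in(\eps_{n+1},\eps_n]$ between consecutive dyadic levels (again via the energy estimate) shows the full net converges to the same limit. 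Stationarity of $\{\mathscr{Y}_t\}$ enters here only to guarantee that each $\mathscr{B}_t^\eps(\phi)$ is genuinely in $L^2(\mathbb{P})$ (the white-noise law makes $(\mathscr{Y}_s\ast\iota_\eps)^2$ integrable with moments uniform in $s$), so the expectations above are finite; this is what makes the Cauchy argument legitimate rather than formal. One then checks linearity in $\phi$ and measurability so that $\mathscr{B}_t$ is a bona fide $\mathfrak{D}'(\mathbb{T})$-valued random variable.

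For the quantitative estimate on $\mathscr{B}_t(\phi)-\mathscr{B}_s(\phi)$, I would work at the level of the approximations. Write $\mathscr{B}_{s,t}^\eps(\phi)=\mathscr{B}_t^\eps(\phi)-\mathscr{B}_s^\eps(\phi)$ and split
\begin{equation*}
    \mathscr{B}_t(\phi)-\mathscr{B}_s(\phi)=\left(\mathscr{B}_t(\phi)-\mathscr{B}_t^\eps(\phi)\right)-\left(\mathscr{B}_s(\phi)-\mathscr{B}_s^\eps(\phi)\right)+\mathscr{B}_{s,t}^\eps(\phi).
\end{equation*}
The first two terms are controlled by summing the dyadic increments starting from scale $\eps$, giving an $L^2$-bound of order $(\kappa\, t)^{1/2}\eps^{1/2}\|\nabla\phi\|_{L^2(\mathbb{T})}$ (and similarly with $s$). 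The middle term $\mathscr{B}_{s,t}^\eps(\phi)$ is the one requiring the stationarity more essentially: by stationarity, $\expected[\mathscr{B}_{s,t}^\eps(\phi)^2]$ is estimated by writing it as a double time integral $\int_s^t\int_s^t\expected[(\mathscr{Y}_{r}\ast\iota_\eps)^2(\cdot)\,(\mathscr{Y}_{r'}\ast\iota_\eps)^2(\cdot)]$ paired against $\nabla\phi\otimes\nabla\phi$; using Cauchy–Schwarz in the spatial integral together with the white-noise moment bound (uniform in $r$), one gets a bound of the form $C(t-s)^2\eps^{-1}\|\nabla\phi\|_{L^2(\mathbb{T})}^2$ — too weak by itself. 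The standard resolution, which I would follow, is to optimise: choose $\eps$ in terms of $(t-s)$, balancing $\eps^{1/2}$ against $(t-s)^2\eps^{-1}$ appropriately, or more cleanly to re-derive the bound using the energy estimate itself with a telescoping over scales from $(t-s)$-dependent starting scale, which yields the claimed exponent $(t-s)^{3/2}$.

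The main obstacle I anticipate is getting the exponent $\tfrac{3}{2}$ in $(t-s)$ sharp rather than $1$ or $2$. The naive bounds above give $(t-s)$ from the linear energy estimate and $(t-s)^2$ from the crude second-moment computation of $\mathscr{B}_{s,t}^\eps$; the $\tfrac{3}{2}$ is intermediate and genuinely requires a scale-optimisation argument — one must exploit that the energy estimate controls the $\eps$-to-$\delta$ \emph{difference} with a gain $\eps(t-s)$, and choose the approximation scale $\eps\sim(t-s)$ so that the error from replacing $\mathscr{B}_{s,t}(\phi)$ by $\mathscr{B}_{s,t}^\eps(\phi)$ (of order $\eps^{1/2}(t-s)^{1/2}\sim(t-s)$) is balanced against the residual term. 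Carrying out this optimisation carefully, and checking that the constant $\tilde\kappa$ does not blow up and that the $\eps$ appearing in the final statement is a genuine free small parameter (or a harmless leftover from the bookkeeping), is the delicate point; the rest is routine once the Cauchy property and the white-noise moment bounds are in place.
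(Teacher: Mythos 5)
The paper itself gives no proof of this proposition: it is quoted from \cite{gj14, gjs17, gp18}, so there is no in-house argument to compare against. Your plan is the standard one from that literature and is correct in outline: use the energy estimate to get the $L^2$-Cauchy property of $\{\mathscr{B}^\eps_{s,t}(\phi)\}_\eps$, then split $\mathscr{B}_{s,t}(\phi)$ into $(\mathscr{B}_{s,t}-\mathscr{B}^\eps_{s,t})(\phi)$ plus $\mathscr{B}^\eps_{s,t}(\phi)$, bound the first piece by $\kappa\,\eps(t-s)\|\nabla\phi\|^2_{L^2(\mathbb{T})}$ (let $\delta\to0$ in the energy estimate), the second by $C(t-s)^2\eps^{-1}\|\nabla\phi\|^2_{L^2(\mathbb{T})}$, and optimise $\eps\sim(t-s)^{1/2}$ to get $(t-s)^{3/2}$. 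Two remarks. First, the dyadic summation is unnecessary for the Cauchy property: for $\delta'\le\delta\le\eps$ the energy estimate gives directly that $\mathscr{B}^{\delta}_{s,t}(\phi)$ and $\mathscr{B}^{\delta'}_{s,t}(\phi)$ are both within $(\kappa\eps(t-s))^{1/2}\|\nabla\phi\|_{L^2(\mathbb{T})}$ of $\mathscr{B}^\eps_{s,t}(\phi)$, hence within twice that of each other, which already tends to $0$ as $\eps\to0$ uniformly over smaller scales.

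Second, and more substantively: your justification of the bound $\expected[(\mathscr{B}^\eps_{s,t}(\phi))^2]\lesssim(t-s)^2\eps^{-1}\|\nabla\phi\|^2_{L^2(\mathbb{T})}$ does not quite work as written. Cauchy--Schwarz in the \emph{spatial} integral combined with uniform fourth-moment bounds $\expected[(\mathscr{Y}_r\ast\iota_\eps(u))^4]\sim\eps^{-2}$ only yields $(t-s)^2\eps^{-2}$, which after optimisation gives $(t-s)^{4/3}$, not $(t-s)^{3/2}$. Moreover, stationarity only fixes the one-time marginals, so the two-time expectation $\expected[(\mathscr{Y}_r\ast\iota_\eps(u))^2(\mathscr{Y}_{r'}\ast\iota_\eps(v))^2]$ for $r\ne r'$ is not computable from the assumptions. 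The correct order of operations is: Cauchy--Schwarz in \emph{time} first, reducing to $(t-s)\int_s^t\expected[(\int_{\mathbb{T}}(\mathscr{Y}_r\ast\iota_\eps(u))^2\nabla\phi(u)\de u)^2]\de r$; then, at each fixed time, use that $\mathscr{Y}_r$ is a Gaussian white noise, so by the Wick formula the constant term is killed by $\int_{\mathbb{T}}\nabla\phi=0$ and the remaining term involves $\mathrm{Cov}(\mathscr{Y}_r\ast\iota_\eps(u),\mathscr{Y}_r\ast\iota_\eps(v))^2$, which has magnitude $\eps^{-2}$ but is supported on $|u-v|\lesssim\eps$; integrating over the strip gives the crucial $\eps^{-1}\|\nabla\phi\|^2_{L^2(\mathbb{T})}$. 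With that step repaired your optimisation goes through and gives the stated exponent. You are also right that the $\eps$ in the final display is a typo inherited by the paper from nowhere: the limit $\mathscr{B}_t(\phi)$ does not depend on $\eps$, and the correct right-hand side (as in \cite{gj14}) is $\tilde\kappa(t-s)^{3/2}\|\nabla\phi\|^2_{L^2(\mathbb{T})}$.
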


The proposition above gives meaning to the square of a distribution-valued process $\{\mathscr{Y}_t, t\in[0, T]\}$, provided it satisfies Definition \ref{def:energy_estimate}, as highlighted in the following.

\begin{definition}[Stationary Energy Solution of the Stochastic Burgers Equation]\label{def:energy_solutions} Let $\lambda\in\mathbb{R}, \nu, \sigma>0$, and let $\{\mathscr{W}_t, t\in[0, T]\}$ be a $\mathfrak{D}'(\mathbb{T})$-valued Brownian motion. We say that a stochastic processes $\{\mathscr{Y}_t, t\in[0, T]\}$ with trajectories in $\mathcal{C}([0, T], \mathfrak{D}'(\mathbb{T}))$ is a \textit{stationary energy solution} of the stochastic Burgers equation 
\begin{equation}\label{eq:general_SBE}
    \partial_t\mathscr{Y}_t=\nu\Delta\mathscr{Y}_t+\lambda \nabla(\mathscr{Y}_t^2)+\sqrt{2\nu\sigma^2}\nabla \dot{\mathscr{W}}_t
\end{equation}
if:
\begin{enumerate}[i)]

    \item for each $t\in[0, T]$, the $\mathfrak{D}'(\mathbb{T})$-valued random variable $\mathcal{Y}_t$ is a white noise of variance $\sigma^2$,

    \item the process $\{\mathscr{Y}_t, t\in[0, T]\}$ satisfies an energy estimate,

    \item for any $\phi\in\mathfrak{D}(\mathbb{T})$, the process $\{\mathscr{M}_t(\phi), t\in[0, T]\}$ defined by
    \begin{equation*}
        \mathscr{M}_t(\phi)=\mathscr{Y}_t(\phi)-\mathscr{Y}_0(\phi)-\int_0^t\mathscr{Y}_s(\nu\Delta\phi)\de s+\lambda\mathscr{B}_t(\phi)
    \end{equation*}
    is a one-dimensional Brownian motion of variance $2\nu\sigma^2\|\nabla\phi\|_{|L^2(\mathbb{T})}^2$, with $\{\mathscr{B}_t, t\in[0, T]\}$ defined as in \eqref{eq:mathscr_B},

    \item the reversed processes $\{\mathscr{Y}_{T-t},\mathscr{B}_{T-t}-\mathscr{B}_{T});\, t\in[0, T]\}$ also satisfy item iii), but with $\lambda$ replaced by $-\lambda$.

\end{enumerate}
\end{definition}

It is a known fact that the notion of an energy solution from Definition \ref{def:energy_solutions}, which is slightly different from the one originated in \cite{gj14} due to item iv), gives in fact uniqueness in law:

\begin{proposition} If $\{\mathscr{Y}_t, t\in[0, T]\}$ and $\{\mathscr{Y}'_t, t\in[0, T]\}$ are both stationary energy solutions of \eqref{eq:general_SBE}, they have the same law.
\end{proposition}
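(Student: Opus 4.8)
The plan is to reduce the statement to the uniqueness theorem of Gubinelli and Perkowski \cite{gp18}, whose strategy I would reproduce as follows. The heart of the matter is that a stationary energy solution, although it only makes sense of the nonlinearity $\mathscr{Y}_t^2$ through the limiting procedure in \eqref{eq:mathscr_B}, carries enough structure to be mapped onto a classically well-posed equation by the Cole--Hopf transform. First I would verify that any stationary energy solution $\{\mathscr{Y}_t, t\in[0,T]\}$ belongs to the relevant class of \emph{controlled} processes: the white-noise marginal in item~i), the energy estimate in item~ii), and the forward martingale decomposition in item~iii) together yield the Hölder-in-time and spatial-regularity bounds needed to place the pair $(\mathscr{Y},\mathscr{B})$ in the appropriate path space and to give meaning to a spatial antiderivative $h_t$ of $\mathscr{Y}_t$ (on the torus one works with the recentred/tilted process to accommodate the conserved zero mode).

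Second, I would perform the exponential change of variables $Z_t := \exp\bigl(\tfrac{\lambda}{\nu} h_t\bigr)$ and argue that $Z$ solves the multiplicative stochastic heat equation $\partial_t Z_t = \nu\Delta Z_t + \tfrac{\lambda}{\nu}\,Z_t\,\dot{\mathscr{W}}_t$ in the Itô sense. The delicate point is that the naive chain rule produces precisely the ill-defined quantity $(\nabla h_t)^2 = \mathscr{Y}_t^2$, and one must show that its renormalised version, as encoded by $\mathscr{B}_t$, is exactly the correct Itô correction. This is where the forward--backward structure becomes indispensable: the ``Itô trick'' (a forward--backward martingale / Kipnis--Varadhan-type estimate) applied to item~iii) controls the martingale contribution, while item~iv) --- the requirement that the time-reversed process be an energy solution with $\lambda$ replaced by $-\lambda$ --- supplies the complementary a priori control on the drift, and it is precisely this additional input that closes the estimates. (Historically, uniqueness was open until \cite{gp18} isolated item~iv) as the missing ingredient; this is the only difference from the formulation originally proposed in \cite{gj14}.)

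Third, having identified $Z$ as a solution of the stochastic heat equation, I would invoke the classical well-posedness of that equation: in one space dimension driven by space-time white noise the multiplicative SHE is subcritical and admits a unique (mild) solution, which is a measurable functional of the initial datum $Z_0 = \exp(\tfrac{\lambda}{\nu}h_0)$ and of the driving noise $\mathscr{W}$. Since the law of $\mathscr{Y}_0$ is prescribed (a white noise of variance $\sigma^2$), the law of $Z_0$ is prescribed; the Cole--Hopf map $Z\mapsto \mathscr{Y}=\tfrac{\nu}{\lambda}\nabla\log Z$ and its inverse are continuous on the pertinent function spaces, so the law of $Z$ and the law of $\mathscr{Y}$ determine each other. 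Uniqueness in law for $Z$ therefore transfers to uniqueness in law for $\mathscr{Y}$.

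The main obstacle is the second step: making the Cole--Hopf transform rigorous in the presence of the singular nonlinearity $\mathscr{Y}_t^2$. This is genuinely delicate --- it is the substance of \cite{gp18} --- and requires the full strength of the a priori bounds extracted from the two-sided (forward and backward) martingale characterisations, together with a careful stochastic calculus for the distribution-valued, merely Hölder-regular process $h_t$. The remaining steps --- the regularity bookkeeping in the first step and the appeal to SHE well-posedness in the third --- are comparatively routine once this transform is in place.
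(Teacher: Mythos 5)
Your outline is an accurate reproduction of the Gubinelli--Perkowski strategy (Cole--Hopf reduction to the multiplicative stochastic heat equation, with the time-reversal condition iv) supplying the extra a priori control that makes the transform rigorous), which is exactly what the paper relies on: it does not prove the proposition itself but simply cites \cite{gp18}. So your proposal matches the paper's (delegated) proof in approach and correctly identifies the key ingredient.
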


The proof of the proposition above can be found in \cite{gp18}.

\subsubsection{Convergence of Equilibrium Fluctuations}
Our process of interest is the diffusively-scaled \textit{density fluctuation field} $\{\mathscr{Y}_t^N, t\ge0\}$, taking values in $\mathfrak{D}'(\mathbb{T})$ and defined, for each $\phi\in\mathfrak{D}(\mathbb{T})$, by
\begin{equation*}
    \mathscr{Y}_t^N(\phi):=\frac{1}{\sqrt{N}}\sum_{x\in\mathbb{T}_N} \phi\left(\frac{x-\boldsymbol{v}_Nt}{N}\right)\bar\eta_{tN^2}^N(x).
\end{equation*}
Here, $\bar\eta_{tN^2}^N(x)$ is the centred random variable $\eta_{tN^2}^N(x)-\bar\rho_N$, where $\bar\rho_N:=E_{\nu_N^\alpha}[\eta_x^N]$, and $\boldsymbol{v}_N$ is the \textit{transport velocity} of the fluctuations. In particular, let
\begin{equation}\label{eq:transport_velocity}
    \boldsymbol{v}_N:=2bN^{2-\gamma}\bar\rho_N\left(2-3\bar\rho_N\right).
\end{equation}
This choice of velocity will become clear in Section \ref{sec:dynkin}, but we anticipate that its role is to remove the degree-one terms of the microscopic current, so that the relevant ones are the quadratic terms. Fix a finite time horizon $[0, T]$, and let $\mathcal{D}([0, T], \mathfrak{D}'(\mathbb{T}))$ denote the space of càdlàg paths taking values in $\mathfrak{D}'(\mathbb{T})$.

\begin{theorem}\label{thm:fluctuations} Assume that $\gamma\ge\frac{1}{2}$ and $\alpha\ge\frac{4}{3}$. The sequence of processes $\{\mathscr{Y}_t^N, t\in[0, T]\}_N$ converges in distribution with respect to the Skorohod topology of $\mathcal{D}([0, T], \mathfrak{D}'(\mathbb{T}))$, and:
\begin{enumerate}[i)]

    \item if $\gamma>\frac{1}{2}$, its limit is the stationary solution of the infinite-dimensional Ornstein-Uhlenbeck equation
    \begin{equation}\label{eq:limit_OU}
        \partial_t\mathscr{Y}_t=\frac{1}{2}\Delta\mathscr{Y}_t+\frac{1}{2}\nabla \dot{\mathscr{W}}_t;
    \end{equation}

    \item if $\gamma=\frac{1}{2}$, its limit is the stationary energy solution of the stochastic Burgers equation
    \begin{equation}\label{eq:limit_SBE}
        \partial_t\mathscr{Y}_t=\frac{1}{2}\Delta\mathscr{Y}_t-b\nabla(\mathscr{Y}_t^2)+\frac{1}{2}\nabla\dot{\mathscr{W}}_t,
    \end{equation}

\end{enumerate}
where $\{\mathscr{W}_t, t\in[0, T]\}$ is an $\mathfrak{D}'(\mathbb{T})$-valued Brownian motion.
\end{theorem}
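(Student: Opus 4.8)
The strategy follows the now-standard Gonçalves--Jara programme: prove tightness of $\{\mathscr{Y}_t^N\}_N$ in $\mathcal{D}([0,T],\mathfrak{D}'(\mathbb{T}))$, characterise all limit points as satisfying the defining properties (i)--(iv) of a stationary energy solution (or of the OU equation when $\gamma>\tfrac12$), and invoke the uniqueness-in-law proposition quoted from \cite{gp18} to conclude. The backbone is a Dynkin martingale decomposition: writing $\mathscr{Y}_t^N(\phi)-\mathscr{Y}_0^N(\phi) = \int_0^t N^2\mathcal{L}^N\mathscr{Y}_s^N(\phi)\,\de s - \int_0^t (\partial_s + \text{drift})\mathscr{Y}_s^N(\phi)\,\de s + \mathscr{M}_t^N(\phi)$, one computes the action of $N^2\mathcal{L}^N$ on the field. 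Because the KLS current is gradient when $\kappa_N=-1$, the generator term splits into a discrete-Laplacian piece (giving $\tfrac12\Delta\mathscr{Y}_s$ in the limit, using $p_N+q_N=1$ and the diffusive scaling), a degree-one piece whose drift is exactly cancelled by the choice of transport velocity $\boldsymbol v_N$ in \eqref{eq:transport_velocity} (this is the point of Section \ref{sec:dynkin}), and a genuinely quadratic piece of the form $\tfrac{bN^{2-\gamma}}{\sqrt N}\sum_x \nabla_N\phi(\tfrac xN)\,\tau_x(\text{quadratic polynomial in }\eta)$. The whole point of the generalised second-order Boltzmann--Gibbs principle, Theorem \ref{thm:gen_bg}/Corollary \ref{corol:gen_bg}, is to replace that quadratic functional (after centring and using that $E_{\nu_N^\alpha}[\eta_x\eta_{x+1}]$ is a known function of $\bar\rho_N$ up to lower order) by $\cev{\eta}^{\eps N}\vec{\eta}^{\eps N}$, which at scale $L=\eps N$ becomes the macroscopic observable $(\mathscr{Y}_s\ast\iota_\eps)^2$, hence produces the Burgers nonlinearity $-b\nabla(\mathscr{Y}_s^2)$ when $\gamma=\tfrac12$, and vanishes when $\gamma>\tfrac12$. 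The hypotheses $\alpha\ge\tfrac43$, $\beta\ge1$ and $2\alpha+\beta\ge4$ needed for Corollary \ref{corol:gen_bg}(ii) are met because, for the measure \eqref{eq:nuN_alpha}, one verifies \textbf{\textsc{Assumptions 1, 1a, 2, 3}} with $\alpha$ as in \eqref{eq:params} and with correlation decay exponent $\beta$ as large as one wishes (the Ising-type Gibbs measure at inverse temperature $N^{-\alpha}$ has exponentially decaying correlations on any fixed scale, and polynomially small correlations at macroscopic separation), so $\beta\ge 1$ and $2\alpha+\beta\ge 4$ hold for $\alpha\ge\tfrac43$; \textbf{\textsc{Assumption 3}} holds since the symmetric part of the rates \eqref{eq:rates} is bounded below on the non-blocked configurations.

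There are two substantial extra ingredients beyond the classical scheme, both flagged in the introduction. First, the \emph{degeneracy}: the $\kappa_N=-1$ dynamics admit blocked configurations, so before applying the Boltzmann--Gibbs principle one must split $\Omega_N$ into a ``good'' set $\mathcal{G}_N$ (configurations containing a mobile cluster, i.e. two particles at distance $\le 2$, from which transport paths spanning the torus can be built) and a ``bad'' set, and show the bad set contributes negligibly. Here I would follow \cite{bgs17}: the correlation bounds of \textbf{\textsc{Assumption 2}} give $\nu_N^\alpha(\mathcal{G}_N^c)$ small and, more importantly, allow one to bound the relevant time-integrated functionals restricted to $\mathcal{G}_N^c$; on $\mathcal{G}_N$ the Dirichlet form is uniformly elliptic in the sense needed so that \textbf{\textsc{Assumption 3}}, and hence Theorem \ref{thm:gen_bg}, applies. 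Second, the \emph{initial fluctuation field}: since $\nu_N^\alpha$ is not product, $\mathscr{Y}_0^N(\phi)$ is a sum of identically distributed but dependent variables, and one cannot quote the classical CLT. Instead, the correlation decay translates into quantitative bounds on the $\alpha$-mixing coefficients of the stationary sequence $(\bar\eta_x)_{x\in\mathbb{T}_N}$, and a CLT for $\alpha$-mixing triangular arrays (e.g. of Ibragimov/Utev type) gives that $\mathscr{Y}_0^N \Rightarrow$ white noise of the appropriate variance $\sigma^2 = \lim_N \Var_{\nu_N^\alpha}(\eta_0) + 2\sum_{k\ge1}\text{Cov}_{\nu_N^\alpha}(\eta_0,\eta_k)$ (which one computes explicitly for the Ising-type measure, obtaining the compressibility $\chi(\bar\rho)$); by stationarity in time this also gives the white-noise property of $\mathscr{Y}_t^N$ for every $t$, i.e. item (i) of Definition \ref{def:energy_solutions}.

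The remaining steps are routine in this framework. \emph{Tightness}: use Aldous' criterion; the Laplacian and drift-corrected linear terms are handled by standard $L^2(\nu_N^\alpha)$ moment bounds (now using correlation control in place of independence), the martingale term via its quadratic variation (an explicit sum over bonds of the rates, which converges to $2\cdot\tfrac12\sigma^2\|\nabla\phi\|_{L^2}^2$), and the quadratic term via Theorem \ref{thm:gen_bg} together with a second moment bound on $\cev{\eta}^{\eps N}\vec{\eta}^{\eps N}$ (again correlation-based). \emph{Energy estimate} (item (ii)): this is essentially a restatement of the Boltzmann--Gibbs bound \eqref{eq:bg_bound}, controlling $\mathscr{B}^\eps_{s,t}-\mathscr{B}^\delta_{s,t}$ by the difference of box-averages at two scales and iterating. \emph{Martingale characterisation} (item (iii)): pass to the limit in the Dynkin decomposition — the martingale $\mathscr{M}_t^N(\phi)$ converges to a Brownian motion with the right variance by the martingale CLT (checking the Lindeberg/jump conditions, which hold since jumps are $O(N^{-1/2})$), and the integral terms converge by the above. \emph{Reversibility} (item (iv)): the measure $\nu_N^\alpha$ is not reversible for the asymmetric dynamics, but the time-reversed process is again a KLS-type process with $b\mapsto -b$ (reflect space), so the same analysis applies with $\lambda=-b$. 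The main obstacle, in my view, is the combination of the blocked-configuration decomposition with the non-product Boltzmann--Gibbs principle: one must be careful that restricting to $\mathcal{G}_N$ does not destroy translation invariance or the structure of \textbf{\textsc{Assumption 3}}, and that the error terms generated by the good/bad splitting are controlled by the \emph{same} correlation estimates that feed Theorem \ref{thm:gen_bg}, so that the exponent budget $\alpha\ge\tfrac43$ is genuinely sufficient and not merely sufficient for the clean (porous-media-free) case.
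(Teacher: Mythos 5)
Your plan matches the paper's proof in all essentials: Dynkin decomposition with the transport velocity killing the degree-one terms, good/bad splitting of configurations to circumvent the degeneracy of the rates, the generalised Boltzmann--Gibbs principle at scale $L=\eps N$ to produce (or kill) the nonlinearity, and an $\alpha$-mixing CLT for the initial field. Two points in your verification of the hypotheses deserve correction, one of them a genuine error of reasoning even though the final threshold is unaffected.

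First, your claim that the correlation-decay exponent $\beta$ ``can be taken as large as one wishes'' is false. \textbf{\textsc{Assumption 2}} demands a bound uniform over \emph{all} configurations of pairwise distinct points, including adjacent ones, and for the Gibbs measure \eqref{eq:nuN_alpha} the nearest-neighbour truncated correlation is exactly of order $N^{-\alpha}$ (the interaction strength); the transfer-matrix computation in Lemma \ref{lemma:correlations} gives $N^{-\alpha d}$ with $d$ the \emph{minimal} gap, so the best admissible choice is $\beta=\alpha$. Consequently the binding constraint in Corollary \ref{corol:gen_bg}(ii) is $2\alpha+\beta=3\alpha\ge 4$, which is precisely where $\alpha\ge\tfrac43$ comes from --- not, as your reasoning suggests, from the standalone condition $\alpha\ge\tfrac43$ with $\beta$ free. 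Your exponent budget happens to close, but for the wrong reason; had the corollary required, say, $2\alpha+\beta\ge 5$, your argument would not have detected the resulting stronger restriction on $\alpha$.

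Second, two smaller inaccuracies. The measure of the bad set is not controlled by the correlation bounds but by comparing $\nu_N^\alpha$ with the Bernoulli$(\tfrac12)$ product measure through the partition-function asymptotics $Z_N^\alpha\sim 2^N$ (which is where the restriction $\alpha>1$ enters) together with a counting argument giving exponential decay $2^{-\lfloor\ell_0/3\rfloor}$ in the box size. And \textbf{\textsc{Assumption 3}} is not recovered as a pointwise lower bound on the symmetric rates over good configurations; rather, the paper replaces the single-bond Dirichlet-form comparison by a path lemma that moves the mobile cluster across the box through $O(\ell+\ell_0)$ admissible elementary swaps, each site being visited a bounded number of times, at the cost of a factor $(\ell+\ell_0)^2$ which must then be absorbed into the exponent budget. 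Since you explicitly worry at the end about whether the good/bad splitting is compatible with the exponent $\alpha\ge\tfrac43$, note that the paper resolves this by taking $\ell_0=N^{1/3}$, so that the bad-set contribution is exponentially small while the path-lemma overhead is harmless.
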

The restriction $\alpha\ge\frac{4}{3}$ is due to the fact that, from Lemma \ref{lemma:correlations}, \textbf{\textsc{Assumption 2}} is verified for $\beta\ge\alpha$; plugging this in \eqref{eq:bg_boundA} with $L=\eps N$ then yields the lower bound for $\alpha$.

Our proof strategy follows the standard program: we write the martingale equation for the fluctuation field and establish tightness in $\mathcal{D}([0,T], \mathfrak{D}'(\mathbb{T}))$ for each of its terms. The main difficulty, as expected, lies in handling the degree-2 term, for which we invoke Theorem \ref{thm:gen_bg}. However, as emphasised in Section \ref{subsec:dynamics_measure}, \textbf{\textsc{Assumption 3}} is not satisfied for this model, since the rates may be degenerate. Most of the work in Section \ref{sec:kls} is therefore devoted to showing that one can restrict to a suitable set of configurations on which the assumption does hold.

\section{Proof of the Generalised Second-Order Boltzmann-Gibbs Principle}\label{sec:proof_gen_bg}
In this section we work under \textbf{\textsc{Assumptions 1, 2}} and \textbf{\textsc{3}} and we revisit the steps used in \cite{gjs17} to prove the second-order Boltzmann-Gibbs Principle, based on the one-block-two-block scheme introduced in \cite{gpv88}. Compared to the strategy therein, our method consists in adding a correction -- right before using the Kipnis-Varadhan inequality -- that takes into account the non-invariance of the stationary measure for particle jumps. The error term coming from this correction will then yield an additional term in all of our bounds depending on the parameters $\alpha$ and $\beta$ defined in \textbf{\textsc{Assumptions 1}} and \textbf{\textsc{2}}.

\subsection{One-Block Estimate}
\begin{lemma}[One-Block Estimate]\label{lemma:one_block} Let $1\le \ell_0\le N$ and let $\varphi:\Omega_N\to\mathbb{R}$ be a function of the form $\varphi(\eta)=\cev{\eta}^\ell_0$ for some $1\le \ell\le N$.
\begin{enumerate}[i)]

    \item There exists a constant $C$ independent of $N$ such that, for any $t>0$ and any $G:\Lambda\to\mathbb{R}$ such that $\|G\|_{2, N}^2<\infty$,
    \begin{align*}
        &\expected_{\nu_\rho^N}\left[\left(\int_0^t\sum_{x\in\Lambda_N} G\left(\frac{x}{N}\right)\varphi(\tau_x\eta_{sN^a})\left[\bar\eta_{sN^a}(x+1)-\vec{\eta}^{\ell_0}_{sN^a}(x)\right]\de s\right)^2\right]
        \\&\le Ct\ell_0^2\|G\|_{2, N}^2\left\{\frac{1}{N^{a-1}\ell}+\frac{1}{N^{a+\beta-1}}+\frac{t(\ell+\ell_0)}{N^{2\alpha-1}\ell}+\frac{t}{N^{2\alpha+\beta-2}}+\frac{t}{N^{3\alpha-2}}\right\};
    \end{align*}

    \item if, additionally, \textbf{\textsc{Assumption 1a}} holds, then the bound can be improved to
    \begin{equation*}
        Ct\ell_0^2\|G\|_{2, N}^2\left\{\frac{1}{N^{a-1}\ell}+\frac{1}{N^{a+\beta-1}}+\frac{t(\ell+\ell_0)}{N^{2\alpha-1}\ell_0\ell}+\frac{t}{N^{2\alpha+\beta-2}}+\frac{t}{N^{3\alpha-2}}\right\}.
    \end{equation*}

\end{enumerate}
The same result holds by substituting $\left[\bar\eta_{x+1}-\vec{\eta}^{\ell_0}_x\right]$ with $\left[\bar\eta_{x}-\cev{\eta}^{\ell_0}_x\right]$, as long as $\varphi$ is of the form $\varphi(\eta)=\vec{\eta}^\ell_0$ for some $1\le \ell\le N$.
\end{lemma}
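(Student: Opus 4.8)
The plan is to follow the classical one-block scheme of \cite{gpv88,gjs17}, adapted to the present non-invariant setting. First, I would use the Kipnis-Varadhan inequality to reduce the time-integrated expectation to a static variational problem: the left-hand side is bounded, up to a constant factor $t$, by
\begin{equation*}
    \sup_{f}\left\{2\left\langle \sum_{x}G\left(\tfrac{x}{N}\right)\varphi(\tau_x\eta)\left[\bar\eta_{x+1}-\vec{\eta}^{\ell_0}_x\right], f\right\rangle_{\nu_\rho^N} - N^a\mathscr{D}_N(f, \nu_\rho^N)\right\},
\end{equation*}
the supremum running over densities $f$ with respect to $\nu_\rho^N$. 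The key algebraic identity is that the increment $\bar\eta_{x+1}-\vec{\eta}^{\ell_0}_x$ can be written as a telescoping sum of the form $\tfrac{1}{\ell_0}\sum_{y}(\bar\eta_{x+1}-\bar\eta_{y})$ over $y$ in the box of size $\ell_0$, and each single difference $\bar\eta_{x+1}-\bar\eta_{y}$ is in turn a telescoping sum of nearest-neighbour differences $\bar\eta_{z}-\bar\eta_{z+1}$, which is exactly what the Dirichlet form controls. The standard move is then to write $\bar\eta_z-\bar\eta_{z+1}=(\bar\eta_z-\bar\eta_{z+1})\cdot\tfrac12$ and use $f(\eta)-f(\eta^{z,z+1})$ together with a change of variables $\eta\mapsto\eta^{z,z+1}$ in the inner product.

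The new feature — and the step I expect to be the main obstacle — is precisely this change of variables: because $\nu_\rho^N$ is not invariant under the swap $\eta\mapsto\eta^{z,z+1}$, performing it produces a Radon-Nikodym factor $\nu_\rho^N(\eta^{z,z+1})/\nu_\rho^N(\eta)$, which by \textbf{\textsc{Assumption 1}} equals $1+N^{-\alpha}(\text{local polynomial in }\bar\eta)+O(N^{-2\alpha})$. The leading "$1$" reproduces the classical estimate and, after Cauchy-Schwarz splitting the inner product into an $\eps$-weighted Dirichlet piece and an $\eps^{-1}$-weighted $L^2$ piece, yields the first two terms $\tfrac{1}{N^{a-1}\ell}$ and $\tfrac{1}{N^{a+\beta-1}}$ — the latter because the $L^2$ norm of $\sum_x G(\tfrac xN)\varphi(\tau_x\eta)(\bar\eta_{x+1}-\bar\eta_y)$, after expanding the square, is governed by two- and higher-point correlations, which by \textbf{\textsc{Assumption 2}} are $O(N^{-\beta})$ (the "diagonal" terms where sites coincide are already handled in the product-measure case). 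The correction terms $N^{-\alpha}(\cdots)$ and $O(N^{-2\alpha})$, multiplied against $f$ and integrated, must then be bounded separately: one controls them by a further Cauchy-Schwarz against the Dirichlet form (picking up a factor $N^{-2\alpha}$ or $N^{-3\alpha}$ from squaring, and a factor $N^{-\beta}$ or not depending on whether the polynomial's constant term vanishes) and using $\|f\|_{L^1(\nu_\rho^N)}=1$ together with boundedness of $\eta$. This accounts for the three remaining terms $\tfrac{t(\ell+\ell_0)}{N^{2\alpha-1}\ell}$, $\tfrac{t}{N^{2\alpha+\beta-2}}$, $\tfrac{t}{N^{3\alpha-2}}$; careful bookkeeping of how many nearest-neighbour bonds (of order $\ell+\ell_0$) the telescoping traverses, and how the box-average factor $\tfrac{1}{\ell_0}$ redistributes the mass, gives the stated $\ell$, $\ell_0$ dependence.

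For part ii), under \textbf{\textsc{Assumption 1a}} the quantity $\big[\tfrac{\nu_\rho^N(\eta^{z,z+1})}{\nu_\rho^N(\eta)}-1\big](\bar\eta_z-\bar\eta_{z+1})$ — which is what actually appears once the nearest-neighbour difference is paired with the Radon-Nikodym correction — has vanishing constant term, so that when two such factors from different (well-separated) bonds are multiplied no constant survives and one gains an extra correlation factor $N^{-\beta}$, or equivalently an extra power of $N$ from the improved cancellation; tracking this through the same Cauchy-Schwarz argument replaces $\tfrac{1}{N^{2\alpha-1}\ell}\cdot(\ell+\ell_0)$ by $\tfrac{1}{N^{2\alpha-1}\ell_0\ell}\cdot(\ell+\ell_0)$, which is the only term that changes. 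Finally, the symmetric statement with $\bar\eta_x-\cev{\eta}^{\ell_0}_x$ and $\varphi(\eta)=\vec{\eta}^\ell_0$ follows verbatim by reflecting all box-averages and bond orientations, since none of the assumptions distinguish left from right. The bulk of the routine work is the combinatorial accounting of correlation orders when expanding the $L^2$ norms, which I would organise into a short lemma on $\|\sum_x G(\tfrac xN)\varphi(\tau_x\eta)\psi_x(\eta)\|_{L^2(\nu_\rho^N)}^2$ for a localised $\psi_x$.
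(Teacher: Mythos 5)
Your high-level plan (Kipnis--Varadhan, telescoping $\bar\eta_{x+1}-\vec{\eta}^{\ell_0}_x$ into nearest-neighbour differences, change of variables $\eta\mapsto\eta^{z,z+1}$, correlation estimates for the new error terms) matches the paper's, and you correctly identify the Radon--Nikodym factor $\sigma_z:=\nu_\rho^N(\eta^{z,z+1})/\nu_\rho^N(\eta)$ as the central obstacle. However, the mechanism you propose for absorbing the correction $[\sigma_z-1]$ has a genuine gap. After your change of variables, the residual term has the form
\begin{equation*}
\int_{\Omega_N}\sum_{x}G\Bigl(\tfrac{x}{N}\Bigr)\varphi(\tau_x\eta)\,\frac{1}{\ell_0}\sum_{y,z}[\bar\eta_z-\bar\eta_{z+1}]\bigl[1-\sigma_z\bigr]f(\eta)\,\de\nu_\rho^N,
\end{equation*}
which multiplies $f(\eta)$ itself rather than a gradient $f(\eta^{z,z+1})-f(\eta)$; there is therefore nothing to pair with the Dirichlet form via Young or Cauchy--Schwarz. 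Moreover, in the $H_{-1}$ variational formula the supremum runs over all $f\in L^2(\nu_\rho^N)$, so the normalisation $\|f\|_{L^1(\nu_\rho^N)}=1$ you invoke is not available; with $f$ unconstrained, a non-degenerate linear term $\int Vf\,\de\nu$ drives the supremum to $+\infty$. Re-splitting $2f=(f-f\circ\mathrm{swap})+(f+f\circ\mathrm{swap})$ on the residue only regenerates the same problem.

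The paper's fix -- the one genuinely new idea of the lemma, and the piece missing from your proposal -- is to symmetrise \emph{before} applying Kipnis--Varadhan: decompose $\bar\eta_z-\bar\eta_{z+1}=[\bar\eta_z-\bar\eta_{z+1}]\tfrac{1+\sigma_z}{2}+[\bar\eta_z-\bar\eta_{z+1}]\tfrac{1-\sigma_z}{2}$, giving the two fields $\vec{\Sigma}_x$ and $\vec{\Lambda}_x$. Only the $\vec{\Sigma}$ part enters the variational problem, where the weight $\tfrac{1+\sigma_z}{2}$ is engineered so that the $f(\eta)+f(\eta^{z,z+1})$ contribution cancels \emph{exactly} after the change of variables, leaving only the piece controlled by the Dirichlet form; this yields the first two terms of the bound. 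The $\vec{\Lambda}$ part, which is $O(N^{-\alpha})$ pointwise by \textbf{\textsc{Assumption 1}}, is removed from the Kipnis--Varadhan machinery altogether and estimated by the crude bound $\expected_{\nu_\rho^N}[(\int_0^tA_s\,\de s)^2]\le t^2E_{\nu_\rho^N}[A^2]$ followed by the static correlation estimates of \textbf{\textsc{Assumptions 1, 2}} (and \textbf{\textsc{1a}} for part ii)); this is also why the last three terms of the stated bound carry an extra factor of $t$ relative to the first two, a structural feature your route would not reproduce. Your accounting of where $\beta$ enters (via $E_{\nu_\rho^N}[\varphi^2]\lesssim\ell^{-1}+N^{-\beta}$ and via products of two well-separated correction polynomials) and of the \textbf{\textsc{Assumption 1a}} improvement is otherwise consistent with the paper.
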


\begin{remark}\label{remark:varphi} Before proving this lemma, let us emphasise that, unlike the analogous result \cite[Proposition 5]{gjs17}, here (and later on) we impose a specific structure on the function $\varphi$, rather than merely assuming that its support does not intersect the set $\{0, \ldots, \ell_0\}$. This additional requirement stems from the fact that our stationary measures are not of product type, so some information on $\varphi$ is needed in order to estimate the ``crossed'' terms involving it. In fact, $\varphi$ need not have the exact form we impose: any function satisfying the support condition together with an explicit polynomial representation in terms of local, centred occupation variables would suffice. Nevertheless, our chosen structure as a local average is convenient and sufficient to establish Theorem \ref{thm:gen_bg}.
\end{remark}

\begin{proof}[Proof of Lemma \ref{lemma:one_block}]
Throughout, $C$ will denote a generic positive constant independent of $N$ but changing from time to time or even from line to line. For each $z\in\Lambda_N$, let
\begin{equation}\label{eq:sigma}
    \sigma_z=\sigma_z(\eta):=\frac{\nu_\rho^N(\eta^{z, z+1})}{\nu_\rho^N(\eta)}.
\end{equation}
The key observation is that we can write
\begin{align}
    \bar\eta_{x+1}-\vec{\eta}^{\ell_0}_x&=\frac{1}{\ell_0}{\sum_{y=x+2}^{x+\ell_0}}\sum_{z=x+1}^{y-1}\left[\bar\eta_z-\bar\eta_{z+1}\right]\label{eq:factor_one}
    \\&=\vec{\Sigma}_x(\eta, \ell_0)+ \vec{\Lambda}_x(\eta, \ell_0),\label{eq:factor_sigmas}
\end{align}
where
\begin{equation*}
    \vec{\Sigma}_x(\eta, \ell_0):=\frac{1}{\ell_0}\sum_{y=x+2}^{x+\ell_0}\sum_{z=x+1}^{y-1}\left[\bar\eta_z-\bar\eta_{z+1}\right]\left[\frac{1+\sigma_z}{2}\right]
\end{equation*}
and
\begin{equation*}
    \vec{\Lambda}_x(\eta, \ell_0):=\frac{1}{\ell_0}\sum_{y=x+2}^{x+\ell_0}\sum_{z=x+1}^{y-1}\left[\bar\eta_z-\bar\eta_{z+1}\right]\left[\frac{1-\sigma_z}{2}\right].
\end{equation*}
By the inequality $(x+y)^2\le 2x^2+2y^2$, the expectation in the statement is bounded from above by
\begin{align}
    &2\expected_{\nu_\rho^N}\left[\left(\int_0^t\sum_{x\in\Lambda_N} G\left(\frac{x}{N}\right)\varphi(\tau_x\eta_{sN^a})\vec{\Sigma}_x(\eta_{sN^a}, \ell_0)\de s\right)^2\right]\label{eq:Sigma_1block}
    \\&+2\expected_{\nu_\rho^N}\left[\left(\int_0^t\sum_{x\in\Lambda_N} G\left(\frac{x}{N}\right)\varphi(\tau_x\eta_{sN^a})\vec{\Lambda}_x(\eta_{sN^a}, \ell_0)\de s\right)^2\right].\label{eq:Lambda_1block}
\end{align}

We start with \eqref{eq:Sigma_1block}. By the Kipnis-Varadhan inequality \cite[Lemma 2.4]{klo12}, this term satisfies
\begin{align}
    \eqref{eq:Sigma_1block}&\le Ct\left\|\sum_{x\in\Lambda_N} G\left(\frac{x}{N}\right)\varphi(\tau_x\eta)\vec{\Sigma}_x(\eta, \ell_0)\right\|_{-1}^2\nonumber
    \\\begin{split}&=Ct\mysup_{f\in L^2(\nu_\rho^N )}\bigg\{2\int_{\Omega_N}\sum_{x\in\Lambda_N} G\left(\frac{x}{N}\right)\varphi(\tau_x\eta)\vec{\Sigma}_x(\eta, \ell_0)f(\eta)\de \nu_\rho^N-N^a\mathscr{D}_N\left(f, \nu_\rho^N \right)\bigg\}.\label{eq:sup_1block}
    \end{split}
\end{align}
Now note that, by the equality $\eqref{eq:factor_one}=\eqref{eq:factor_sigmas}$, we can write $\vec{\Sigma}_x(\eta, \ell_0)$ as
\begin{equation}\label{eq:S_1block}
    \vec{\Sigma}_x(\eta, \ell_0)= \frac{1}{\ell_0}\sum_{y=x+2}^{x+\ell_0}\sum_{z=x+1}^{y-1}\left[\bar\eta_z-\bar\eta_{z+1}\right]-\vec{\Lambda}_x(\eta, \ell_0).
\end{equation}
Then, the integral term in \eqref{eq:sup_1block} corresponding to the sum on the right-hand side of \eqref{eq:S_1block} can be written as
\begin{align}
    &2\int_{\Omega_N}\sum_{x\in\Lambda_N} G\left(\frac{x}{N}\right)\varphi(\tau_x\eta)\left(\frac{1}{\ell_0}\sum_{y=x+2}^{x+\ell_0}\sum_{z=x+1}^{y-1}\left[\bar\eta_z-\bar\eta_{z+1}\right]\right)f(\eta)\de \nu_\rho^N \nonumber
    \\&=\int_{\Omega_N}\sum_{x\in\Lambda_N} G\left(\frac{x}{N}\right)\varphi(\tau_x\eta)\left(\frac{1}{\ell_0}\sum_{y=x+2}^{x+\ell_0}\sum_{z=x+1}^{y-1}\left[\bar\eta_z-\bar\eta_{z+1}\right]\right)\left[f(\eta)-f(\eta^{z, z+1})\right]\de \nu_\rho^N  \label{eq:firstAdd_1block}
    \\&\phantom{=}+\int_{\Omega_N}\sum_{x\in\Lambda_N} G\left(\frac{x}{N}\right)\varphi(\tau_x\eta)\left(\frac{1}{\ell_0}\sum_{y=x+2}^{x+\ell_0}\sum_{z=x+1}^{y-1}\left[\bar\eta_z-\bar\eta_{z+1}\right]\right)\left[f(\eta)+f(\eta^{z, z+1})\right]\de \nu_\rho^N .\label{eq:secondAdd_1block}
\end{align}
By Young's inequality, for any set of positive real numbers $\{A_x\}_{x\in\Lambda_N}$ we have that \eqref{eq:firstAdd_1block} is bounded from above by
\begin{align*}
    &\frac{1}{\ell_0}\sum_{x\in\Lambda_N}\sum_{y=x+2}^{x+\ell_0}\sum_{z=x+1}^{y-1} G\left(\frac{x}{N}\right)\frac{A_x}{2}\int_{\Omega_N}q^N_{z, z+1}(\eta)\left[f(\eta)-f(\eta^{z, z+1})\right]^2\de \nu_\rho^N  
    \\&+\frac{1}{\ell_0}\sum_{x\in\Lambda_N}\sum_{y=x+2}^{x+\ell_0}\sum_{z=x+1}^{y-1} G\left(\frac{x}{N}\right)\frac{1}{2A_x}\int_{\Omega_N}\varphi(\tau_x\eta)^2\frac{\left[\bar\eta_z-\bar\eta_{z+1}\right]^2}{q_{z, z+1}^N(\eta)}\de \nu_\rho^N,
\end{align*}
where $q_{z, z+1}^N$ is defined in \textbf{\textsc{Assumption 3}}. If we choose $A_x=N^a\left(4\ell_0G(\frac{x}{N})\right)^{-1}$, the first integral term is bounded from above by $N^a\mathscr{D}_N(f, \nu_\rho^N )$. As for the second term, we get
\begin{align}
    &\frac{2}{N^a}\sum_{x\in\Lambda_N}\sum_{y=x+2}^{x+\ell_0}\sum_{z=x+1}^{y-1} G\left(\frac{x}{N}\right)^2\int_{\Omega_N}\varphi(\tau_x\eta)^2\frac{\left[\bar\eta_z-\bar\eta_{z+1}\right]^2}{q_{z, z+1}^N(\eta)}\de \nu_\rho^N \nonumber
    \\&\le \frac{C}{N^a}\sum_{x\in\Lambda_N}\sum_{y=x+2}^{x+\ell_0}\sum_{z=x+1}^{y-1} G\left(\frac{x}{N}\right)^2 E_{\nu_\rho^N}\left[\varphi(\tau_x\eta)^2\right]\nonumber
    \\&\le \frac{C\ell_0^2}{N^{a-1}}\|G\|_{2, N}^2E_{\nu_\rho^N}\left[\varphi(\tau_x\eta)^2\right].\label{eq:pre_bound}
\end{align}
We now turn to \eqref{eq:secondAdd_1block}: by performing the change of variable $\eta\mapsto \eta^{z, z+1}$ and by using the fact that the support of $\varphi$ does not intersect the set of points $\{0, \ldots, \ell_0\}$, we get that this term is equal to
\begin{equation*}
    \int_{\Omega_N}\sum_{x\in\Lambda_N} G\left(\frac{x}{N}\right)\varphi(\tau_x\eta)\left\{\frac{1}{\ell_0}\sum_{y=x+2}^{x+\ell_0}\sum_{z=x+1}^{y-1}\left[\bar\eta_z-\bar\eta_{z+1}\right]\left[1-\frac{\nu_\rho^N (\eta^{z, z+1})}{\nu_\rho^N (\eta)}\right]\right\}f(\eta)\de \nu_\rho^N.
\end{equation*}
But then, this term cancels with the contribution in \eqref{eq:sup_1block} given by the second term on the right-hand side of \eqref{eq:S_1block}, so we get $\eqref{eq:Sigma_1block}\le\eqref{eq:pre_bound}$. Now, using \textbf{\textsc{Assumption 2}}, we see that
\begin{equation}\label{eq:variance_varphi}
    E_{\nu_\rho^N}\left[\varphi(\tau_x\eta)^2\right]=\frac{1}{\ell^2}E_{\nu_\rho^N}\left[\left(\bar\eta_{x-\ell}+\ldots+\bar\eta_{x-1}\right)^2\right]\lesssim\frac{1}{\ell}+\frac{1}{N^\beta}
\end{equation}

for each $x\in\Lambda_N$, which yields
\begin{equation*}
    \eqref{eq:Sigma_1block}\le \frac{C\ell_0^2}{N^{a-1}}\|G\|_{2, N}^2\left(\frac{1}{\ell}+\frac{1}{N^\beta}\right).
\end{equation*}

We are now left to control \eqref{eq:Lambda_1block}. To simplify the exposition, we assume that $m_1, m_2$ defined in \textbf{\textsc{Assumption 1}} are given by $m_1=0, m_2=1$, but the proof can readily be adapted to any finite values. By the Cauchy-Schwarz inequality and the inequality $(x+y)^2\le 2x^2+2y^2$, we have that
\begin{align}
    \eqref{eq:Lambda_1block}&\le Ct^2 E_{\nu_\rho^N}\left[\left(\sum_{x\in\Lambda_N} G\left(\frac{x}{N}\right)\varphi(\tau_x\eta)\vec{\Lambda}_x(\eta, \ell_0)\right)^2\right]\nonumber
    \\&\le Ct^2(\ell+\ell_0)\sum_{x\in\Lambda_N} G\left(\frac{x}{N}\right)^2E_{\nu_\rho^N}\left[\varphi(\tau_x\eta)^2\vec{\Lambda}_x(\eta, \ell_0)^2\right]\label{eq:sumX_1block}
    \\&\phantom{\le}+Ct^2\sum_{|x-y|>\ell+\ell_0} G\left(\frac{x}{N}\right)G\left(\frac{y}{N}\right)E_{\nu_\rho^N}\left[\varphi(\tau_x\eta)\varphi(\tau_y\eta)\vec{\Lambda}_x(\eta, \ell_0)\vec{\Lambda}_y(\eta, \ell_0)\right]. \label{eq:sumXY_1block}
\end{align}
Now:
\begin{enumerate}[i)]
    
    \item by \textbf{\textsc{Assumption 1}}, we see that $\vec{\Lambda}_x(\eta, \ell_0)^2\lesssim \frac{\ell_0^2}{N^{2\alpha}}$, and by \eqref{eq:variance_varphi} we have that $E_{\nu_\rho^N}\left[\varphi(\tau_x\eta)^2\right]\lesssim(\frac{1}{\ell}+\frac{1}{N^\beta})$. Hence, 
    \begin{equation*}
        \eqref{eq:sumX_1block}\le \frac{Ct^2\ell_0^2}{N^{2\alpha-1}}\|G\|_{2, N}^2(\ell+\ell_0)\left(\frac{1}{\ell}+\frac{1}{N^\beta}\right).
    \end{equation*}
    As for \eqref{eq:sumXY_1block}, note that, by \textbf{\textsc{Assumptions 1}} and \textbf{\textsc{2}}, for each $|x-y|>\ell+\ell_0$ we have that
    \begin{align*}
        &\left|E_{\nu_\rho^N}\left[\varphi(\tau_x\eta)\varphi(\tau_y\eta)\vec{\Lambda}_x(\eta, \ell_0)\vec{\Lambda}_y(\eta, \ell_0)\right]\right|
        \\&=\frac{1}{4\ell^2\ell_0^2}\Bigg|E_{\nu_\rho^N}\Bigg[\sum_{i=x-\ell}^{x-1}\bar\eta_i \sum_{j=y-\ell}^{y-1}\bar\eta_j\sum_{x'=x+2}^{x+\ell_0}\sum_{w=x+1}^{x'-1} \overbrace{[\bar\eta_w-\bar\eta_{w+1}][1-\sigma_w]}^{N^{-\alpha}\cdot\text{poly}(\bar\eta_w, \bar\eta_{w+1})+O(N^{-2\alpha})} \times
        \\&\phantom{=\frac{1}{4\ell^2\ell_0^2}\Bigg|E_{\nu_\rho^N}\Bigg[}\times \sum_{y'=y+2}^{y+\ell_0}\sum_{z=y+1}^{y'-1} \underbrace{[\bar\eta_z-\bar\eta_{z+1}][1-\sigma_z]}_{N^{-\alpha}\cdot\text{poly}(\bar\eta_z, \bar\eta_{z+1})+O(N^{-2\alpha})}\Bigg]\Bigg|
        \\&\lesssim \frac{\ell_0^2}{N^{2\alpha+\beta}}+\frac{\ell_0^2}{N^{3\alpha}},
    \end{align*}
    where the notation $\text{poly}(\bar\eta_w, \bar\eta_{w+1})$ is intended in the sense of \eqref{eq:poly_correction}. Thus,
    \begin{equation*}
        \eqref{eq:sumXY_1block}\le \frac{Ct^2\ell_0^2}{N^{2\alpha-2}}\|G\|_{2, N}^2\left(\frac{1}{N^\beta}+\frac{1}{N^\alpha}\right);
    \end{equation*}
    
    \item if, additionally, \textbf{\textsc{Assumption 1a}} holds, then the bound of \eqref{eq:sumX_1block} can be improved: indeed, in this case, for each $x$,
    \begin{align*}
        &E_{\nu_\rho^N}\left[\varphi(\tau_x\eta)^2\vec{\Lambda}_x(\eta, \ell_0)^2\right]
        \\&=\frac{1}{4\ell^2\ell_0^2}\Bigg|E_{\nu_\rho^N}\Bigg[\sum_{i=x-\ell}^{x-1}\bar\eta_i \sum_{j=x-\ell}^{x-1}\bar\eta_j\sum_{x'=x+2}^{x+\ell_0}\sum_{w=x+1}^{x'-1} \overbrace{[\bar\eta_w-\bar\eta_{w+1}][1-\sigma_w]}^{N^{-\alpha}\cdot\text{poly}_{\text{deg}\ge1}(\bar\eta_w, \bar\eta_{w+1})+O(N^{-2\alpha})} \times
        \\&\phantom{=\frac{1}{4\ell^2\ell_0^2}\Bigg|\expected\Bigg[}\times \sum_{y'=y+2}^{x+\ell_0}\sum_{z=x+1}^{y'-1} \underbrace{[\bar\eta_z-\bar\eta_{z+1}][1-\sigma_z]}_{N^{-\alpha}\cdot\text{poly}_{\text{deg}\ge1}(\bar\eta_z, \bar\eta_{z+1})+O(N^{-2\alpha})}\Bigg]\Bigg|
        \\&\lesssim \frac{\ell_0}{\ell N^{2\alpha}}+\frac{\ell_0^2}{\ell N^{2\alpha+\beta}}+\frac{\ell_0^2}{\ell N^{3\alpha}}+\frac{\ell_0}{N^{2\alpha+\beta}}+\frac{\ell_0}{N^{3\alpha}}+\frac{\ell_0^2}{N^{2\alpha+\beta}}+\frac{\ell_0^2}{N^{3\alpha}},
    \end{align*}
    where the notation $\text{poly}_{\text{deg}\ge1}(\bar\eta_w, \bar\eta_{w+1})$ is intended in the sense of \eqref{eq:poly_correctionA}, so that
    \begin{equation*}
        \eqref{eq:sumX_1block}\le \frac{Ct^2\ell_0^2}{N^{2\alpha-1}}\|G\|_{2, N}^2(\ell+\ell_0)\left(\frac{1}{\ell_0\ell}+\frac{1}{N^\beta}+\frac{1}{N^\alpha}\right).
    \end{equation*}
    
\end{enumerate}
Combining everything together, we finally get the required bounds. 
\end{proof}

\subsection{Doubling the Box}
\begin{lemma}[Two-Block Estimate]\label{lemma:two_block} For $k\in\mathbb{N}$, fix $\ell_k\in\mathbb{N}$ and define $\ell_{k+1}:=2\ell_k$. Let $\varphi:\Omega_N\to\mathbb{R}$ be a function of the form $\varphi(\eta)=\cev{\eta}^\ell_0$ for some $1\le \ell\le N$. 

\begin{enumerate}[i)]

    \item There exists a constant $C$ independent of $N$ such that, for any $t>0$ and any $G:\Lambda\to\mathbb{R}$ such that $\|G\|_{2, N}^2<\infty$,
    \begin{align*}
        &\expected_{\nu_\rho^N}\left[\left(\int_0^t\sum_{x\in\Lambda_N} G\left(\frac{x}{N}\right)\varphi(\tau_x\eta_s)\left[\vec\eta^{\ell_k}_{sN^a}(x+1)-\vec{\eta}^{\ell_{k+1}}_{sN^a}(x)\right]\de s\right)^2\right]
        \\&\le  Ct\ell_k^2\|G\|_{2, N}^2\left\{\frac{1}{N^{a-1}\ell}+\frac{1}{N^{a+\beta-1}}+\frac{t(\ell+\ell_{k+1})}{N^{2\alpha-1}\ell}+\frac{t}{N^{2\alpha+\beta-2}}+\frac{t}{N^{3\alpha-2}}\right\};
    \end{align*}
    
    \item if, additionally, \textbf{\textsc{Assumption 1a}} holds, then the bound can be improved to
    \begin{equation*}
        Ct\ell_k^2\|G\|_{2, N}^2\left\{\frac{1}{N^{a-1}\ell}+\frac{1}{N^{a+\beta-1}}+\frac{t(\ell+\ell_{k+1})}{N^{2\alpha-1}\ell_k\ell}+\frac{t}{N^{2\alpha+\beta-2}}+\frac{t}{N^{3\alpha-2}}\right\}.
    \end{equation*}

\end{enumerate}
The same result holds by substituting $\left[\vec\eta^{\ell_k}_{x+1}-\vec{\eta}^{\ell_{k+1}}_x\right]$ with $\left[\cev\eta^{\ell_k}_{x+1}-\cev{\eta}^{\ell_{k+1}}_x\right]$, as long as $\varphi$ is of the form $\varphi(\eta)=\vec{\eta}^\ell_0$ for some $1\le \ell\le N$.
\end{lemma}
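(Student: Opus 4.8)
The plan is to mimic the structure of the proof of Lemma~\ref{lemma:one_block}, exploiting the fact that doubling the box from $\ell_k$ to $\ell_{k+1}=2\ell_k$ is formally the same operation as passing from a single site to an $\ell_0$-box, but now averaged over the $\ell_k$ sites of the inner box. The starting point is the telescoping identity: writing $\vec\eta^{\ell_k}_{x+1}(x) = \frac{1}{\ell_k}\sum_{i=1}^{\ell_k}\bar\eta_{x+i}$ and $\vec\eta^{\ell_{k+1}}(x)=\frac{1}{2\ell_k}\sum_{i=1}^{2\ell_k}\bar\eta_{x+i}$, one expresses the difference $\vec\eta^{\ell_k}_{x+1}-\vec\eta^{\ell_{k+1}}_x$ as an average over $i\in\{1,\dots,\ell_k\}$ of terms of the form $\bar\eta_{x+i}-\vec\eta^{\ell_k}_{x+i-1}$ (shifted copies of the one-block increment), which in turn telescope via $\bar\eta_z-\bar\eta_{z+1}$ exactly as in~\eqref{eq:factor_one}. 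Concretely, I would write
\begin{equation*}
    \vec\eta^{\ell_k}_{x+1}-\vec\eta^{\ell_{k+1}}_x = \frac{1}{\ell_{k+1}}\sum_{y}\sum_{z}[\bar\eta_z-\bar\eta_{z+1}]
\end{equation*}
for an appropriate range of $y$ and $z$ all lying in $\{x+1,\dots,x+\ell_{k+1}\}$, and then split each $[\bar\eta_z-\bar\eta_{z+1}]$ using the resolvent-type decomposition $1=\tfrac{1+\sigma_z}{2}+\tfrac{1-\sigma_z}{2}$ with $\sigma_z$ as in~\eqref{eq:sigma}, producing the analogues of $\vec\Sigma_x$ and $\vec\Lambda_x$.

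The $\vec\Sigma$-part is then handled by the Kipnis-Varadhan inequality \cite[Lemma 2.4]{klo12} exactly as in Lemma~\ref{lemma:one_block}: after the variational formula~\eqref{eq:sup_1block}, the symmetric-in-$(f,f\circ\tau^{z,z+1})$ contribution cancels against the $\vec\Lambda$-correction built into $\vec\Sigma$ (this is where the change of variables $\eta\mapsto\eta^{z,z+1}$ is used, and where the hypothesis that $\operatorname{supp}\varphi$ avoids $\{0,\dots,\ell_0\}$ — here one needs it to avoid $\{0,\dots,\ell_{k+1}\}$, which holds since $\varphi=\cev\eta^\ell_0$ has support to the left of the origin — is invoked), and Young's inequality with $A_x=N^a(4\ell_{k+1}G(x/N))^{-1}$ absorbs the Dirichlet form, leaving the term controlled by $\frac{C\ell_{k+1}^2}{N^{a-1}}\|G\|_{2,N}^2 E_{\nu_\rho^N}[\varphi(\tau_x\eta)^2]$, and \textbf{\textsc{Assumption 2}} via~\eqref{eq:variance_varphi} bounds $E_{\nu_\rho^N}[\varphi^2]\lesssim \frac1\ell+\frac1{N^\beta}$. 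Note that $\ell_{k+1}^2\asymp\ell_k^2$, so this yields the first two terms $\frac{1}{N^{a-1}\ell}+\frac{1}{N^{a+\beta-1}}$ in the stated bound (after multiplying through by $\ell_k^2$). The $\vec\Lambda$-part is estimated deterministically as in Lemma~\ref{lemma:one_block}: Cauchy-Schwarz in time gives the factor $t^2$, then one separates the diagonal-ish contribution ($|x-y|\le\ell+\ell_{k+1}$, giving the factor $(\ell+\ell_{k+1})\sum_x G(x/N)^2\,E_{\nu_\rho^N}[\varphi(\tau_x\eta)^2\vec\Lambda_x^2]$) from the far-apart contribution ($|x-y|>\ell+\ell_{k+1}$), using \textbf{\textsc{Assumption 1}} to get $\vec\Lambda_x^2\lesssim \ell_{k+1}^2/N^{2\alpha}$ and, for the far-apart terms, expanding $[\bar\eta_w-\bar\eta_{w+1}][1-\sigma_w]=N^{-\alpha}\mathrm{poly}(\bar\eta_w,\bar\eta_{w+1})+O(N^{-2\alpha})$ and applying the $m$-point correlation decay of \textbf{\textsc{Assumption 2}} to the product of two such local polynomials supported far apart — exactly the computation displayed in case~i) of the proof of Lemma~\ref{lemma:one_block}, producing the terms $\frac{t}{N^{2\alpha+\beta-2}}$ and $\frac{t}{N^{3\alpha-2}}$ and, from the diagonal piece, $\frac{t(\ell+\ell_{k+1})}{N^{2\alpha-1}\ell}$.

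For part~ii), under \textbf{\textsc{Assumption 1a}} the polynomial $[\bar\eta_w-\bar\eta_{w+1}][1-\sigma_w]=[\sigma_w-1](\bar\eta_w-\bar\eta_{w+1})$ has no constant term (its degree-$\ge1$ representation is~\eqref{eq:poly_correctionA} with a nonzero non-constant coefficient), so in $E_{\nu_\rho^N}[\varphi(\tau_x\eta)^2\vec\Lambda_x^2]$ the product of the two $\vec\Lambda$-factors contributes only genuine products of centred occupation variables once the $w$ and $z$ indices are separated; combined with the $\varphi^2$ factor and the correlation decay, this upgrades the diagonal estimate, replacing $\frac{t(\ell+\ell_{k+1})}{N^{2\alpha-1}\ell}$ by $\frac{t(\ell+\ell_{k+1})}{N^{2\alpha-1}\ell_k\ell}$, matching the computation in case~ii) of Lemma~\ref{lemma:one_block}. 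The statement for the mirrored increment $[\cev\eta^{\ell_k}_{x+1}-\cev\eta^{\ell_{k+1}}_x]$ with $\varphi=\vec\eta^\ell_0$ follows by the obvious left-right reflection of the whole argument. The main obstacle — as in Lemma~\ref{lemma:one_block} — is the bookkeeping in the $\vec\Lambda$-term: one must carefully track which occupation variables can coincide (the indices $i,j$ from $\varphi^2$, and $w,z$ from the two $\vec\Lambda$-factors all range over windows of size $O(\ell)$ and $O(\ell_{k+1})$), since every coincidence replaces a gained $N^{-\beta}$ correlation factor by an $O(1)$ variance factor, and it is exactly these coincidence patterns that generate the various terms in the final bound; the cleanest way is to bound the number of tuples with a given coincidence structure and apply \textbf{\textsc{Assumptions 1}}, \textbf{\textsc{1a}} and \textbf{\textsc{2}} term by term, as is implicitly done in the proof of Lemma~\ref{lemma:one_block}.
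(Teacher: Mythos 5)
Your proposal is correct and follows essentially the same route as the paper: the same telescoping identity writing $\vec\eta^{\ell_k}_{x+1}-\vec\eta^{\ell_{k+1}}_x$ as a double sum of increments $\bar\eta_z-\bar\eta_{z+1}$ over the doubled box, the same $\tfrac{1\pm\sigma_z}{2}$ splitting into $\vec\Sigma$ and $\vec\Lambda$ parts, Kipnis--Varadhan plus the change of variables $\eta\mapsto\eta^{z,z+1}$ and the cancellation with the built-in $\vec\Lambda$-correction for the symmetric part, and the diagonal/off-diagonal correlation estimates of Lemma \ref{lemma:one_block} for the $\vec\Lambda$ part. The only cosmetic difference is that the paper makes explicit the convexity step $(\sum_{y=1}^{\ell_k}a_y)^2\le\ell_k\sum_y a_y^2$ before applying Kipnis--Varadhan to each shifted $\vec\Sigma_{x+y}$, which is what produces the factor $\ell_k^2$ in the final bound; your bookkeeping via $\ell_{k+1}^2\asymp\ell_k^2$ arrives at the same place.
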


\begin{proof}
We proceed in a similar way to the proof of Lemma \ref{lemma:one_block}: we write
\begin{align*}
    \vec{\eta}^{\ell_k}_x-\vec{\eta}^{\ell_{k+1}}_{x+1}&=\frac{1}{2\ell_k}\sum_{y=1}^{\ell_k}\sum_{z=y+x}^{y+x+\ell_k-1}\left[\bar\eta_z-\bar\eta_{z+1}\right]
    \\&=\sum_{y=1}^{\ell_k}\vec{\Sigma}_{x+y}(\eta, \ell_k)+\sum_{y=1}^{\ell_k}\vec{\Lambda}_{x+y}(\eta, \ell_k),
\end{align*}
where
\begin{equation*}
    \vec{\Sigma}_{x}(\eta, \ell_k)=\frac{1}{2\ell_k}\sum_{z=x}^{x+\ell_k-1}\left[\bar\eta_z-\bar\eta_{z+1}\right]\left[\frac{1+\sigma_z}{2}\right],
\end{equation*}
\begin{equation*}
    \vec{\Lambda}_x(\eta, \ell_k)=\frac{1}{2\ell_k}\sum_{z=x}^{x+\ell_k-1}\left[\bar\eta_z-\bar\eta_{z+1}\right]\left[\frac{1-\sigma_z}{2}\right]
\end{equation*}
and $\sigma_z$ is as in \eqref{eq:sigma}. Then, the expectation in the statement is bounded from above by
\begin{align}
    &2\expected_{\nu_\rho^N}\left[\left(\int_0^t\sum_{x\in\Lambda_N} G\left(\frac{x}{N}\right)\varphi(\tau_x\eta_s)\sum_{y=1}^{\ell_k}\vec{\Sigma}_{x+y}(\eta_s, \ell_k)\de s\right)^2\right]\label{eq:Sigma_2block}
    \\&+2\expected_{\nu_\rho^N}\left[\left(\int_0^t\sum_{x\in\Lambda_N} G\left(\frac{x}{N}\right)\varphi(\tau_x\eta_s)\sum_{y=1}^{\ell_k}\vec{\Lambda}_{x+y}(\eta_s, \ell_k)\de s\right)^2\right].\label{eq:Lambda_2block}
\end{align}
By the Kipnis-Varadhan inequality and a standard convexity inequality, we have that
\begin{align}
    \eqref{eq:Sigma_2block}&\le Ct\ell_k\sum_{y=1}^{\ell_k}\left\|\sum_{x\in\Lambda_N} G\left(\frac{x}{N}\right)\varphi(\tau_x\eta)\vec{\Sigma}_{x+y}(\eta_s, \ell_k)\right\|_{-1}^2\nonumber
    \\\begin{split}&=Ct\ell_k\sum_{y=1}^{\ell_k}\mysup_{f\in L^2(\nu_\rho^N )}\bigg\{2\int_{\Omega_N}\sum_{x\in\Lambda_N} G\left(\frac{x}{N}\right)\varphi(\tau_x\eta)\vec{\Sigma}_{x+y}(\eta_s, \ell_k)f(\eta)\de \nu_\rho^N
    \\&\phantom{=}-N^a\mathscr{D}_N\left(f, \nu_\rho^N \right)\bigg\}.\label{eq:sup_2block}
    \end{split}
\end{align}
Now, we write
\begin{equation}\label{eq:S_twoB}
    \vec{\Sigma}_{x+y}(\eta, \ell_k)=\frac{1}{2\ell_k}\sum_{z=x+y}^{x+y+\ell_k-1}\left[\bar\eta_z-\bar\eta_{z+1}\right]-\vec{\Lambda}_{x+y}(\eta, \ell_k),
\end{equation}
and we write the integral term in \eqref{eq:sup_2block} corresponding to the sum in \eqref{eq:S_twoB} as
\begin{align}
    &2\int_{\Omega_N}\sum_{x\in\Lambda_N} G\left(\frac{x}{N}\right)\varphi(\tau_x\eta)\left(\frac{1}{2\ell_k}\sum_{z=y+x}^{y+x+\ell_k-1}\left[\bar\eta_z-\bar\eta_{z+1}\right]\right)f(\eta)\de \nu_\rho^N \nonumber
    \\&=\int_{\Omega_N}\sum_{x\in\Lambda_N} G\left(\frac{x}{N}\right)\varphi(\tau_x\eta)\left(\frac{1}{2\ell_k}\sum_{z=y+x}^{y+x+\ell_k-1}\left[\bar\eta_z-\bar\eta_{z+1}\right]\right)\left[f(\eta)-f(\eta^{z, z+1})\right]\de \nu_\rho^N  \label{eq:firstAdd_2block}
    \\&\phantom{=}+\int_{\Omega_N}\sum_{x\in\Lambda_N} G\left(\frac{x}{N}\right)\varphi(\tau_x\eta)\left(\frac{1}{2\ell_k}\sum_{z=y+x}^{y+x+\ell_k-1}\left[\bar\eta_z-\bar\eta_{z+1}\right]\right)\left[f(\eta)+f(\eta^{z, z+1})\right]\de \nu_\rho^N .\label{eq:secondAdd_2block} 
\end{align}
We can now apply Young's inequality as done in the proof of Lemma \ref{lemma:one_block} and use the same cancellations, so as to get that
\begin{align*}
    \eqref{eq:Sigma_2block}&\le \frac{Ct\ell_k}{N^{a-1}}\sum_{y=1}^{\ell_k}\|G\|_{2, N}^2 E_{\nu_\rho^N}\left[\varphi(\tau_x\eta)^2\right]
    \\&\le \frac{Ct\ell_k^2}{N^{a-1}}\|G\|_{2, N}^2\left(\frac{1}{\ell}+\frac{1}{N^\beta}\right).
\end{align*}
As for \eqref{eq:Lambda_2block}, by the same argument used to bound \eqref{eq:Lambda_1block} in the proof of Lemma \ref{lemma:one_block}:
\begin{enumerate}[i)]
    \item we get that
    \begin{equation*}
        \eqref{eq:Lambda_2block}\le Ct^2\ell_k^2\|G\|_{2, N}^2\left(\frac{\ell+\ell_{k+1}}{\ell N^{2\alpha-1}}+\frac{\ell+\ell_{k+1}}{N^{2\alpha+\beta-1}}+\frac{1}{N^{2\alpha+\beta-2}}+\frac{1}{N^{3\alpha-2}}\right);
    \end{equation*}

    \item if additionally \textbf{\textsc{Assumption 1a}} holds, then
    \begin{equation*}
        \eqref{eq:Lambda_2block}\le Ct\ell_k^2\|G\|_{2, N}^2\left(\frac{\ell+\ell_{k+1}}{\ell_k \ell N^{2\alpha-1}}+\frac{\ell+\ell_{k+1}}{\ell_k N^{2\alpha+\beta-1}}+\frac{1}{N^{2\alpha+\beta-2}}+\frac{1}{N^{3\alpha-2}}\right).
    \end{equation*}

\end{enumerate}
Combining everything together yields the required bound.
\end{proof}

The following corollary follows immediately from Lemma \ref{lemma:two_block}. The idea is simply to write the expression appearing in the statement as a telescopic sum involving boxes which double in size at every step, and then apply Lemma \ref{lemma:two_block} to each of these terms.

\begin{corollary}\label{corol:two_block} Fix $\ell_0, M\in\mathbb{N}$ and let $\varphi:\Omega_N\to\mathbb{R}$ be a function of the form $\varphi(\eta)=\cev{\eta}^\ell_0$ for some $1\le \ell\le N$.
\begin{enumerate}[i)]
    \item There exists a constant $C$ independent of $N$ such that, for any $t>0$ and any $G:\Lambda\to\mathbb{R}$ such that $\|G\|_{2, N}^2<\infty$,
    \begin{align*}
        &\expected_{\nu_\rho^N}\left[\left(\int_0^t\sum_{x\in\Lambda_N} G\left(\frac{x}{N}\right)\varphi(\tau_x\eta_s)\left[\vec\eta^{\ell_0}_{sN^a}(x+1)-\vec{\eta}^{2^{M-1}\ell_0}_{sN^a}(x)\right]\de s\right)^2\right]
        \\&\le Ct\ell_0^2\|G\|_{2, N}^2\left\{\frac{1}{N^{a-1}\ell}+\frac{1}{N^{a+\beta-1}}+\frac{t(\ell+\ell_0)}{N^{2\alpha-1}\ell}+\frac{t}{N^{2\alpha+\beta-2}}+\frac{t}{N^{3\alpha-2}}\right\};
    \end{align*}

    \item if, additionally, \textbf{\textsc{Assumption 1a}} holds, then the bound can be improved to
    \begin{equation*}
        Ct\ell_0^2\|G\|_{2, N}^2\left\{\frac{1}{N^{a-1}\ell}+\frac{1}{N^{a+\beta-1}}+\frac{t(\ell+\ell_0)}{N^{2\alpha-1}\ell_0\ell}+\frac{t}{N^{2\alpha+\beta-2}}+\frac{t}{N^{3\alpha-2}}\right\}.
    \end{equation*}

\end{enumerate}
The same result holds by substituting $\left[\vec\eta^{\ell_0}_{x+1}-\vec{\eta}^{2^{M-1}\ell_0}_x\right]$ with $\left[\cev\eta^{\ell_0}_{x+1}-\cev{\eta}^{2^{M-1}\ell_0}_x\right]$, as long as $\varphi$ is of the form $\varphi(\eta)=\vec{\eta}^\ell_0$ for some $1\le \ell\le N$.
\end{corollary}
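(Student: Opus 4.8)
The plan is to prove Corollary \ref{corol:two_block} by a telescoping argument built directly on top of Lemma \ref{lemma:two_block}. First I would write, for the fixed starting box $\ell_0$ and the target box $2^{M-1}\ell_0$, the identity
\begin{equation*}
    \vec\eta^{\ell_0}_{x+1}-\vec{\eta}^{2^{M-1}\ell_0}_x=\sum_{k=0}^{M-2}\left[\vec\eta^{\ell_k}_{x+1}-\vec{\eta}^{\ell_{k+1}}_x\right],
\end{equation*}
where $\ell_k:=2^k\ell_0$, up to the harmless bookkeeping of which site ($x$ or $x+1$) the smaller box is anchored at --- exactly the bookkeeping already absorbed in the statement of Lemma \ref{lemma:two_block}. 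Plugging this into the time integral and expanding the square, I would apply the Cauchy--Schwarz inequality in the index $k$ to bound the square of the sum of $M-1$ terms by $(M-1)$ times the sum of the squares. Each squared term is then exactly the quantity estimated in Lemma \ref{lemma:two_block} with the same $\varphi=\cev\eta^\ell_0$ and with $\ell_k$ in place of the generic box size there.

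Next I would sum the resulting bounds over $k=0,\dots,M-2$. The crucial point is that the $k$-dependence is entirely through factors of $\ell_k=2^k\ell_0$ and $\ell_{k+1}=2^{k+1}\ell_0$, so the sums are geometric: $\sum_{k}\ell_k^2$ and $\sum_k\ell_k^2(\ell+\ell_{k+1})$ are both dominated, up to an absolute constant, by their top term, i.e.\ by $(2^{M-1}\ell_0)^2$ and $(2^{M-1}\ell_0)^2(\ell+2^{M-1}\ell_0)$ respectively (and similarly for the $1/\ell_k$ factor appearing under Assumption 1a, whose geometric sum over $k$ is dominated by its \emph{smallest} term $1/\ell_0$). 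Writing $L:=2^{M-1}\ell_0$, the extra factor $M-1$ from Cauchy--Schwarz combines with the geometric sum to give, for each of the five terms, a bound of the form (const)$\cdot (M-1)\cdot L^2/N^{\bullet}$ against the corresponding $\ell_0^2/N^{\bullet}$ appearing in the claimed estimate. Here the statement as written keeps $\ell_0^2$ rather than $L^2$ in front; this is precisely the point where one uses that $M$ is fixed (independent of $N$), so $M\cdot 2^{2(M-1)}$ is just a constant that may be absorbed into $C$, and likewise $(\ell+\ell_{k+1})\le \ell+\ell_0\cdot 2^{M-1}$ is $\lesssim \ell+\ell_0$ up to a constant depending only on $M$. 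I would carry out this matching term by term for part i) and then, verbatim with the improved third term, for part ii) under Assumption 1a, and finally note the analogous statement for the left averages $\cev\eta$ follows from the corresponding clause of Lemma \ref{lemma:two_block}.

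The main obstacle --- really the only subtlety, since the structure is routine --- is the bookkeeping of the anchoring sites across the telescope: at each step Lemma \ref{lemma:two_block} compares $\vec\eta^{\ell_k}$ at $x+1$ with $\vec\eta^{\ell_{k+1}}$ at $x$, so chaining these requires that the ``output'' box of step $k$ match the ``input'' box of step $k+1$ as sets of summed sites. One has to check that the union of the box of size $\ell_k$ to the right of $x+1$ and the block of sites between $x$ and $x+1$-plus-$\ell_k$ reassembles correctly into the box of size $\ell_{k+1}$, i.e.\ that the telescoping identity above holds on the nose modulo $N$; this is the discrete analogue of the identity used inside the proof of Lemma \ref{lemma:two_block} itself and introduces at worst an additive shift of the summation index that is absorbed by translation invariance of $\nu_\rho^N$ (Assumption 1.i) and of $\|G\|_{2,N}$. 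A secondary point to keep in mind is that $\varphi=\cev\eta^\ell_0$ must keep having support disjoint from the (growing) boxes being manipulated --- but since $\varphi$ is a left average over sites to the \emph{left} of the origin while every box in the telescope sits to the right, this is automatic, exactly as in Lemma \ref{lemma:two_block}. No new probabilistic input (no new use of Assumptions 2 or 3) is needed beyond what Lemma \ref{lemma:two_block} already provides.
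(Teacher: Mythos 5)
Your route is the same as the paper's: the paper disposes of Corollary \ref{corol:two_block} with exactly the one-line argument you propose (telescope over doubling boxes and apply Lemma \ref{lemma:two_block} to each summand), and your handling of the anchoring bookkeeping and of the support condition on $\varphi$ is fine. The genuine gap is in the final summation over $k$, and you have in fact put your finger on it yourself. Lemma \ref{lemma:two_block} costs $Ct\ell_k^2\{\cdots\}$ at scale $k$, so whether you combine the $M-1$ terms by Cauchy--Schwarz (a factor $M-1$ times $\sum_k\ell_k^2$) or, better, by Minkowski in $L^2(\mathbb{P}_{\nu_\rho^N})$ followed by the geometric summation $\sum_k\ell_k\lesssim 2^{M-1}\ell_0$ --- which is how the paper treats the entirely analogous sum \eqref{eq:term_A} in the proof of Lemma \ref{lemma:renorm}, and which removes the spurious factor $M-1$ --- the resulting prefactor is of order $\bigl(2^{M-1}\ell_0\bigr)^2$, i.e.\ the \emph{largest} box squared, not $\ell_0^2$. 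Your resolution (absorb $M\cdot 4^{M-1}$ into $C$ because ``$M$ is fixed'') proves only a version of the statement whose constant depends on $M$, and that version is useless where the corollary is actually invoked: in the bound of term \eqref{eq:term_C} in Lemma \ref{lemma:renorm} one takes $2^{M}\ell_0=L$ with $L=\eps N$ and $\ell_0=L^{1/3}$, so $M\asymp\log N$ and $4^{M}\asymp (L/\ell_0)^2$ diverges with $N$. As written, your argument therefore does not establish the corollary with the uniformity in which it is used.

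Two further remarks. First, the same issue infects your treatment of the improved term under \textbf{\textsc{Assumption 1a}}: the relevant quantity is $\sum_k\ell_k^2\cdot\frac{\ell+\ell_{k+1}}{\ell_k\ell}\asymp\frac{1}{\ell}\sum_k\ell_k(\ell+\ell_{k+1})$, again dominated by its top term, not by the $k=0$ term as your ``geometric sum dominated by its smallest term $1/\ell_0$'' remark suggests. Second, the discrepancy you are papering over appears to lie in the statement of the corollary itself rather than in your execution: telescoping plus Lemma \ref{lemma:two_block} yields the stated bounds with $\ell_0^2$ replaced by $(2^{M-1}\ell_0)^2$, and one can check that this weaker version still suffices for Lemma \ref{lemma:renorm} and hence for Theorem \ref{thm:gen_bg}, since the resulting extra powers of $L$ are absorbed by the $L$-terms already present there. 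The honest fix is either to prove and propagate that weaker bound, or to supply a genuinely different argument for the $\ell_0^2$ prefactor; declaring $M$ constant is not it.
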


\subsection{Renormalisation Step}
The last preliminary result that we need in order to show Theorem \ref{thm:gen_bg} consists in the following lemma, which is known in the literature as the \textit{renormalisation step} as it involves a multi-scale analysis, which consists in replacing averages of particles in increasingly larger boxes. This lemma gives an upper bound on the price for one of these replacements, namely for going from a box  of size $\ell_0$ to a larger one of size $L$.
\begin{lemma}\label{lemma:renorm} The following holds. 
\begin{enumerate}[i)]

    \item There exists a constant $C$ independent of $N$ such that, for any $1\le \ell_0\le L$, any $t>0$ and any $G:\Lambda\to\mathbb{R}$ such that $\|G\|_{2, N}^2<\infty$,
    \begin{align*}
        &\expected_{\nu_\rho^N}\left[\left(\int_0^t\sum_{x\in\Lambda_N} G\left(\frac{x}{N}\right)\cev{\eta}^{\ell_0}_{sN^a}(x)\left[\vec\eta^{\ell_0}_{sN^a}(x)-\vec{\eta}^{L}_{sN^a}(x)\right]\de s\right)^2\right]\nonumber
        \\\begin{split} &\le Ct\|G\|_{2, N}^2L\left\{\frac{1}{N^{a-1}}+\frac{1}{N^{a+\beta-2}}+\frac{t}{N^{2\alpha-2}}+\frac{t}{N^{2\alpha+\beta-3}}+\frac{t}{N^{3\alpha-3}}\right\}
        \\&\phantom{\le}+Ct\|G\|_{2, N}^2\ell_0^2\left\{\frac{1}{N^{a-1}L}+\frac{1}{N^{a+\beta-1}}+\frac{t}{N^{2\alpha-1}}+\frac{t}{N^{2\alpha+\beta-2}}+\frac{t}{N^{3\alpha-2}}\right\};\end{split}
    \end{align*}
    
    \item if, additionally, \textbf{\textsc{Assumption 1a}} holds, then the last bound can be improved to 
    \begin{equation*}
        \begin{split} &Ct\|G\|_{2, N}^2 L \bigg\{\frac{1}{N^{a-1}}+\frac{1}{N^{a+\beta-2}}
        +\frac{t}{N^{2\alpha-1}}+\frac{t}{N^{2\alpha+\beta-3}}+\frac{t}{N^{3\alpha-3}}\bigg\}
        \\&+Ct\ell_0^2\|G\|_{2, N}^2\bigg\{\frac{1}{N^{a-1}L}+\frac{1}{N^{a+\beta-1}}+\frac{t}{\ell_0N^{2\alpha-1}}+\frac{t}{N^{2\alpha+\beta-2}}+\frac{t}{N^{3\alpha-2}}\bigg\}.
        \end{split}
    \end{equation*}
    
\end{enumerate}
\end{lemma}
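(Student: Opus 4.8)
The plan is to expand the telescopic difference between the box of size $\ell_0$ and the box of size $L$ using the doubling scheme, then apply Corollary \ref{corol:two_block} and the One-Block Estimate in a way that tracks which factor plays the role of the auxiliary function $\varphi$. Concretely, writing $L$ (up to a constant) as $2^{M-1}\ell_0$ for a suitable $M\sim\log_2(L/\ell_0)$, I would first use the identity
\begin{equation*}
    \cev{\eta}^{\ell_0}_x\bigl[\vec\eta^{\ell_0}_x-\vec{\eta}^{L}_x\bigr]
    = \cev{\eta}^{\ell_0}_x\bigl[\vec\eta^{\ell_0}_x-\vec{\eta}^{\ell_0}_x(\text{shifted})\bigr]+\cev{\eta}^{\ell_0}_x\bigl[\vec\eta^{\ell_0}_{x+1}-\vec{\eta}^{2^{M-1}\ell_0}_x\bigr],
\end{equation*}
so that the second summand is exactly of the form handled by Corollary \ref{corol:two_block} with $\varphi(\eta)=\cev{\eta}^{\ell_0}_0$, i.e. $\varphi$ is a left-average of size $\ell=\ell_0$ whose support avoids $\{0,\dots,\ell_0\}$ — hence the hypotheses of that corollary (as inherited from Lemma \ref{lemma:one_block}) are met. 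Plugging $\ell=\ell_0$ into the Corollary bound produces precisely the second bracketed term in the statement (the $\ell_0^2\{N^{-(a-1)}L^{-1}+N^{-(a+\beta-1)}+tN^{-(2\alpha-1)}+\cdots\}$ piece, and its improved version under \textbf{\textsc{Assumption 1a}}).

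The more delicate contribution is the ``shift'' term, where we must compare $\cev{\eta}^{\ell_0}_x$ centred at $x$ with a neighbouring average, or equivalently where the product $\cev{\eta}^{\ell_0}_x\,\vec{\eta}^{L}_x$ is being symmetrised. Here the roles are reversed: I would fix $\varphi(\eta)=\vec{\eta}^{L}_0$ — a right-average of size $\ell=L$ — and apply the One-Block Estimate (Lemma \ref{lemma:one_block}, in the version with $[\bar\eta_x-\cev{\eta}^{\ell_0}_x]$ and $\varphi$ a right-average) to replace $\bar\eta_x$ by its left-average over the box of size $\ell_0$. The cost of this replacement is the Lemma \ref{lemma:one_block} bound with $\ell=L$, which after simplification ($\ell_0^2/(N^{a-1}L)\le L/N^{a-1}$ when $\ell_0\le L$, and similarly for the other terms) collapses to the first bracketed term $Ct\|G\|_{2,N}^2 L\{N^{-(a-1)}+N^{-(a+\beta-2)}+tN^{-(2\alpha-2)}+tN^{-(2\alpha+\beta-3)}+tN^{-(3\alpha-3)}\}$, and to its \textbf{\textsc{Assumption 1a}} sharpening $tN^{-(2\alpha-1)}$ in the middle slot. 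One subtlety: iterating the doubling introduces a sum over the $M\sim\log(L/\ell_0)$ scales; since at scale $k$ the relevant box size is $\ell_k=2^k\ell_0$ and the Two-Block bound carries a factor $\ell_k^2$, the geometric series is dominated (up to a constant) by its last term $\sim L^2$, which is how the single power of $L$ and the clean form of the final bound emerge after dividing through. I would also need to absorb the $(\ell+\ell_{k+1})/\ell$ type factors, which at $\ell=\ell_0$ and $\ell_{k+1}\le 2L$ are $O(L/\ell_0)$, again summing geometrically.

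The main obstacle I anticipate is bookkeeping the two different choices of $\varphi$ consistently — in particular verifying that in the ``shift'' step the function $\varphi=\vec{\eta}^{L}_0$ genuinely has support disjoint from $\{0,\dots,\ell_0\}$ after the relevant translation, so that the crucial cancellation in the proof of Lemma \ref{lemma:one_block} (the change of variables $\eta\mapsto\eta^{z,z+1}$ killing the ``$f(\eta)+f(\eta^{z,z+1})$'' term) still goes through. Because $L$ can be as large as order $N$, one cannot treat $\varphi$ as genuinely ``local'' at macroscopic scale, but the support/polynomial-representation requirement from Remark \ref{remark:varphi} is still satisfied, and the correlation estimates from \textbf{\textsc{Assumption 2}} used inside Lemma \ref{lemma:one_block} only see the combinatorics of how many sites are involved, not their spread — so the bounds survive. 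The rest is arithmetic: add the two contributions, simplify using $\ell_0\le L\le N$, and recognise the stated expressions (with the $N^{2\alpha-2}\rightsquigarrow N^{2\alpha-1}$ improvement tracked through both steps whenever \textbf{\textsc{Assumption 1a}} is in force).
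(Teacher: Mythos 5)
Your decomposition is not the one the paper uses, and it does not deliver the stated bound. The paper's proof rests on an \emph{alternating} telescoping: at each dyadic scale $k$ it doubles the right box while multiplying by the left average at the \emph{matching} scale, i.e.\ it treats $\cev{\eta}^{\ell_k}_x\bigl[\vec\eta^{\ell_k}_x-\vec\eta^{\ell_{k+1}}_x\bigr]$, then doubles the left box against $\vec\eta^{\ell_{k+1}}_x$, and only in a single final step collapses $\cev\eta^{\ell_{M-1}}_x$ back to $\cev\eta^{\ell_0}_x$ against $\varphi=\vec{\eta}^{L}_0$. The point of this bookkeeping is that the two-block cost at scale $k$ is of order $\ell_k^2\,E_{\nu_\rho^N}[\varphi^2]/N^{a-1}\sim \ell_k^2/(N^{a-1}\ell_k)=\ell_k/N^{a-1}$, because the spectator factor $\varphi$ is itself an average over a box of size $\ell_k$; summing the geometric series is what produces the single power of $L$ in the first bracket. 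In your scheme $\varphi=\cev\eta^{\ell_0}_0$ is frozen at the smallest scale for the entire passage from $\ell_0$ to $L$, so $E_{\nu_\rho^N}[\varphi^2]\sim 1/\ell_0$ never improves: tracing Corollary \ref{corol:two_block} back through its proof (a Minkowski sum of $\sim\log_2(L/\ell_0)$ two-block estimates, each carrying a prefactor $\ell_k^2$), the accumulated Dirichlet-form cost is $\bigl(\sum_k\ell_k\bigr)^2/(N^{a-1}\ell_0)\sim L^2/(N^{a-1}\ell_0)$, which exceeds the target $L/N^{a-1}$ by the factor $L/\ell_0$. This is fatal in the intended application, where $\ell_0=L^{1/3}$ and $L=\eps N$. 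You cannot repair this by citing the corollary's displayed prefactor $\ell_0^2$ at face value with $\ell=\ell_0$: read that way the corollary would render the whole lemma (and its first bracketed $\propto L$ term) superfluous, and the $1/\ell$ gain it records is only genuinely available when $\ell$ is taken comparable to $L$, which is exactly how the paper invokes it for its final term.

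A second, related problem is your ``shift'' term: $\cev\eta^{\ell_0}_x\bigl[\vec\eta^{\ell_0}_x-\vec\eta^{\ell_0}_{x+1}\bigr]=\ell_0^{-1}\cev\eta^{\ell_0}_x(\bar\eta_{x+1}-\bar\eta_{x+\ell_0+1})$ is a small boundary correction; it contains no $\vec\eta^{L}$, cannot be attacked by a one-block estimate with $\varphi=\vec\eta^{L}_0$ replacing $\bar\eta_x$ by $\cev\eta^{\ell_0}_x$, and cannot produce the first bracketed contribution of order $L$ as you claim. The ingredients you do identify correctly (dyadic telescoping, the support condition on $\varphi$, the $N^{2\alpha-2}\rightsquigarrow N^{2\alpha-1}$ improvement under \textbf{\textsc{Assumption 1a}}) are necessary but not sufficient: the missing idea is precisely the simultaneous, alternating doubling of \emph{both} boxes, so that the spectator factor always lives at the current scale and contributes the extra $1/\ell_k$ that converts $L^2$ into $L$.
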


\begin{proof}
We follow the proof of in \cite[Proposition 8]{gjs17}. First, assume that $L=2^M\ell_0$ for some $M\in\mathbb{N}$, and for each $k=0, \ldots, M-1$ let $\ell_{k+1}=2\ell_k$. We write
\begin{align*}
    \cev{\eta}^{\ell_0}_x\left[\vec\eta^{\ell_0}_x-\vec{\eta}^{L}_x\right]& = \sum_{k=0}^{M-1}\cev{\eta}^{\ell_k}_x\left[\vec\eta^{\ell_k}_x-\vec{\eta}^{\ell_{k+1}}_x\right]
    \\&\phantom{=}+\sum_{k=0}^{M-2}\vec{\eta}^{\ell_{k+1}}_x\left[\cev\eta^{\ell_k}_x-\cev{\eta}^{\ell_{k+1}}_x\right]
    \\&\phantom{=}+\vec{\eta}^{L}_x\left[\cev\eta^{\ell_{M-1}}_x-\cev{\eta}^{\ell_0}_x\right]
\end{align*}
Hence, by Minkowski's inequality and a convexity inequality, the expectation in the statement is bounded from above by
\begin{align}
    &C\sum_{k=0}^{M-1}\expected_{\nu_\rho^N}\left[\left(\int_0^t\sum_{x\in\Lambda_N}G\left(\frac{x}{N}\right)\cev{\eta}^{\ell_k}_{sN^a}(x)\left[\vec\eta^{\ell_k}_{sN^a}(x)-\vec{\eta}^{\ell_{k+1}}_{sN^a}(x)\right]\de s\right)^2\right]\label{eq:term_A}
    \\&+C\sum_{k=0}^{M-2}\expected_{\nu_\rho^N}\left[\left(\int_0^t\sum_{x\in\Lambda_N}G\left(\frac{x}{N}\right)\vec{\eta}^{\ell_{k+1}}_{sN^a}(x)\left[\cev\eta^{\ell_k}_{sN^a}(x)-\cev{\eta}^{\ell_{k+1}}_{sN^a}(x)\right]\de s\right)^2\right]\label{eq:term_B}
    \\&+C\expected_{\nu_\rho^N}\left[\left(\int_0^t\sum_{x\in\Lambda_N}G\left(\frac{x}{N}\right)\vec{\eta}^{\ell_{k+1}}_{sN^a}(x)\left[\cev\eta^{\ell_{M-1}}_{sN^a}(x)-\cev{\eta}^{\ell_0}_{sN^a}(x)\right]\de s\right)^2\right].\label{eq:term_C}
\end{align}
Now:
\begin{enumerate}[i)]
    
    \item by the two-block estimate with $\varphi(\eta)=\cev{\eta}^{\ell_k}_0$, we have that
    \begin{align*}
        \eqref{eq:term_A}&\le Ct\|G\|_{2, N}^2\hspace{-.3em}\sum_{k=1}^{M-1}\hspace{-.3em}\ell_k\left\{\frac{1}{N^{a-1}}+\frac{\ell_k}{N^{a+\beta-1}}+\frac{t\ell_k}{N^{2\alpha-1}}+\frac{t\ell_k}{N^{2\alpha+\beta-2}}+\frac{t\ell_k}{N^{3\alpha-2}}\right\}
        \\&\le Ct\|G\|_{2, N}^2 L\left\{\frac{1}{N^{a-1}}+\frac{1}{N^{a+\beta-2}}+\frac{t}{N^{2\alpha-2}}+\frac{t}{N^{2\alpha+\beta-3}}+\frac{t}{N^{3\alpha-3}}\right\},
    \end{align*}
    and similarly the same bound holds for \eqref{eq:term_B}. As for \eqref{eq:term_C}, by Corollary \ref{corol:two_block} with $\varphi(\eta)=\cev{\eta}^L_0$, we have that
    \begin{equation*}
        \eqref{eq:term_C}\le Ct\ell_0^2\|G\|_{2, N}^2\left\{\frac{1}{N^{a-1}L}+\frac{1}{N^{a+\beta-1}}+\frac{t}{N^{2\alpha-1}}+\frac{t}{N^{2\alpha+\beta-2}}+\frac{t}{N^{3\alpha-2}}\right\}
    \end{equation*}
    
    \item under \textbf{\textsc{Assumption 1a}}, again by the two-block estimate we have that
    \begin{align*}
        \eqref{eq:term_A}&\le Ct\|G\|_{2, N}^2\hspace{-.3em}\sum_{k=1}^{M-1}\hspace{-.3em}\ell_k\left\{\frac{1}{N^{a-1}}+\frac{\ell_k}{N^{a+\beta-1}}+\frac{t}{N^{2\alpha-1}}+\frac{t\ell_k}{N^{2\alpha+\beta-2}}+\frac{t\ell_k}{N^{3\alpha-2}}\right\}
        \\&\le CtL\|G\|_{2, N}^2 \bigg\{\frac{1}{N^{a-1}}+\frac{1}{N^{a+\beta-2}}+\frac{t}{N^{2\alpha+\beta-3}}+\frac{t}{N^{2\alpha-1}}+\frac{t}{N^{3\alpha-3}}\bigg\}
    \end{align*}
    and the same bound holds for \eqref{eq:term_B}, whereas, by Corollary \ref{corol:two_block}, 
    \begin{equation*}
        \eqref{eq:term_C}\le Ct\ell_0^2\|G\|_{2, N}^2\left\{\frac{1}{N^{a-1}L}+\frac{1}{N^{a+\beta-1}}+\frac{t}{N^{2\alpha-1}\ell_0}+\frac{t}{N^{2\alpha+\beta-2}}+\frac{t}{N^{3\alpha-2}}\right\}.
    \end{equation*}

\end{enumerate}
Combining everything together finally yields the claimed bounds. If instead $L$ cannot be written as $2^M\ell_0$, it suffices to choose $M$ such that $2^M\ell_0<L<2^{M+1}\ell_0$, and a similar computation yields the same bounds.
\end{proof}

\subsection{Proof of Theorem \ref{thm:gen_bg}}
We are now ready to show the generalised second-order Boltzmann-Gibbs principle. 

\begin{proof}[Proof of Theorem \ref{thm:gen_bg}] Let $\ell_0\le L$. We write $\bar\eta_x\bar\eta_{x+1}-\cev{\eta}_x^L\vec{\eta}_x^L$ in the expectation appearing in the statement as
\begin{align}
    \bar\eta_x\bar\eta_{x+1}-\cev{\eta}_x^L\vec{\eta}_x^L&=\bar{\eta}_x\left(\bar\eta_{x+1}-\vec{\eta}^{\ell_0}_x\right)\label{eq:term_one}
    \\&\phantom{=}+\vec{\eta}^{\ell_0}_x\left(\bar\eta_x-\cev{\eta}^{\ell_0}_x\right)\label{eq:term_two}  \\&\phantom{=}+\cev{\eta}^{\ell_0}_x\left(\vec{\eta}^{\ell_0}_x-\vec{\eta}^L_x\right)\label{eq:term_three}
    \\&\phantom{=}+\vec{\eta}^L_x\left(\cev{\eta}^{\ell_0}_x-\bar{\eta}_x\right)\label{eq:term_four}
    \\&\phantom{=}+\vec{\eta}^L_x\left(\bar\eta_x-\cev{\eta}^L_x\right).\label{eq:term_five}
\end{align}
Then:
\begin{enumerate}[i)]
    \item the four terms \eqref{eq:term_one}, \eqref{eq:term_two}, \eqref{eq:term_four} and \eqref{eq:term_five} can all be treated using the one-block estimate. Indeed, by applying Lemma \ref{lemma:one_block} with $\varphi(\eta)=\eta_0$, $\varphi(\eta)=\vec{\eta}^{\ell_0}_0$ and twice with $\varphi(\eta)=\vec{\eta}^{L}_0$ respectively, we have that:
    \begin{align*}
        &\expected_{\nu_\rho^N}\left[\left(\int_0^t\sum_{x\in\Lambda_N}G\left(\frac{x}{N}\right)\bar{\eta}_{sN^a}(x)\left[\bar\eta_{sN^a}(x+1)-\vec{\eta}^{\ell_0}_{sN^a}(x)\right]\de s\right)^2\right]
        \\&\le Ct\|G\|_{2, N}^2\left\{\frac{\ell_0^2}{N^{a-1}}+\frac{\ell_0^2}{N^{a+\beta-1}}+\frac{t\ell_0^3}{N^{2\alpha-1}}+\frac{t\ell_0^2}{N^{2\alpha+\beta-2}}+\frac{t\ell_0^2}{N^{3\alpha-2}}\right\},
    \end{align*}
    \begin{align*}
        &\expected_{\nu_\rho^N}\left[\left(\int_0^t\sum_{x\in\Lambda_N}G\left(\frac{x}{N}\right)\vec{\eta}^{\ell_0}_{sN^a}(x)\left[\bar\eta_{sN^a}(x)-\cev{\eta}^{\ell_0}_{sN^a}(x)\right]\de s\right)^2\right]
        \\&\le Ct\|G\|_{2, N}^2\left\{\frac{\ell_0}{N^{a-1}}+\frac{\ell_0^2}{N^{a+\beta-1}}+\frac{t\ell_0^2}{N^{2\alpha-1}}+\frac{t\ell_0^2}{N^{2\alpha+\beta-2}}+\frac{t\ell_0^2}{N^{3\alpha-2}}\right\},
    \end{align*}
    \begin{align*}
        &\expected_{\nu_\rho^N}\left[\left(\int_0^t\sum_{x\in\Lambda_N}G\left(\frac{x}{N}\right)\vec{\eta}^L_{sN^a}(x)\left[\cev\eta_{sN^a}^{\ell_0}(x)-\bar{\eta}_{sN^a}(x)\right]\de s\right)^2\right]
        \\&\le Ct\|G\|_{2, N}^2\left\{\frac{\ell_0^2}{N^{a-1}L}+\frac{\ell_0^2}{N^{a+\beta-1}}+\frac{t\ell_0^2}{N^{2\alpha-1}}+\frac{t\ell_0^2}{N^{2\alpha+\beta-2}}+\frac{t\ell_0^2}{N^{3\alpha-2}}\right\},
    \end{align*}
    and
    \begin{align*}
        &\expected_{\nu_\rho^N}\left[\left(\int_0^t\sum_{x\in\Lambda_N}G\left(\frac{x}{N}\right)\vec{\eta}^L_{sN^a}(x)\left[\bar\eta_{sN^a}(x)-\cev{\eta}^L_{sN^a}(x)\right]\de s\right)^2\right]
        \\&\le Ct\|G\|_{2, N}^2\left\{\frac{L}{N^{a-1}}+\frac{L^2}{N^{a+\beta-1}}+\frac{tL^2}{N^{2\alpha-1}}+\frac{tL^2}{N^{2\alpha+\beta-2}}+\frac{tL^2}{N^{3\alpha-2}}\right\}.
    \end{align*}
    As for \eqref{eq:term_three}, by Lemma \ref{lemma:renorm} we have that
    \begin{align*}
        &\expected_{\nu_\rho^N}\left[\left(\int_0^t\sum_{x\in\Lambda_N}G\left(\frac{x}{N}\right)\cev{\eta}^{\ell_0}_{sN^a}(x)\left[\vec{\eta}^{\ell_0}_{sN^a}(x+1)-\vec{\eta}^L_{sN^a}(x)\right]\de s\right)^2\right]
        \\\begin{split} &\le Ct\|G\|_{2, N}^2\left\{\frac{L}{N^{a-1}}+\frac{L}{N^{a+\beta-2}}+\frac{tL}{N^{2\alpha-2}}+\frac{tL}{N^{2\alpha+\beta-3}}+\frac{tL}{N^{3\alpha-3}}\right\}
        \\&\phantom{\le}+Ct\|G\|_{2, N}^2\left\{\frac{\ell_0^2}{N^{a-1}L}+\frac{\ell_0^2}{N^{a+\beta-1}}+\frac{t\ell_0^2}{N^{2\alpha-1}}+\frac{t\ell_0^2}{N^{2\alpha+\beta-2}}+\frac{t\ell_0^2}{N^{3\alpha-2}}\right\}.\end{split}
    \end{align*}
    Since $\ell_0$ is arbitrary, choosing for instance $\ell_0=L^{\frac{1}{3}}$ and combining everything together yields \eqref{eq:bg_bound};
    
    \item the bound \eqref{eq:bg_boundA} follows by the exact same argument using the bounds obtained under \textbf{\textsc{Assumption 1a}} instead. 

\end{enumerate}
This completes the proof.
\end{proof}

\section{Equilibrium Fluctuations of the KLS Model}\label{sec:kls}
In this section we show Theorem \ref{thm:fluctuations}. Our approach follows the classical scheme: one first writes the discrete martingale equation for the density fluctuation field $\{\mathscr{Y}_t^N, t\in[0, T]\}$ and shows that the sequence of measures $\{\mathcal{Q}_N\}_N$ is tight in $\mathcal{D}([0, T], \mathfrak{D}'(\mathbb{T}))$ with respect to the Skorohod topology, and then proceeds to characterise its limit point $\mathcal{Q}$ as the law of an appropriate equation. As the tightness argument is standard, we only focus on the characterisation of the limit point and especially on the quadratic term in the martingale equation, which is the one that calls for Theorem \ref{thm:gen_bg}. Note, however, that as discussed in Section \ref{subsec:dynamics_measure}, the model does \textit{not} satisfy \textbf{\textsc{Assumption 3}} of the generalised second-order Boltzmann–Gibbs principle, since the rates may be degenerate. We will therefore show that the analysis can be restricted to a suitable set of configurations on which the assumption does hold.

Throughout, we will denote by $\mathbb{P}_{\nu_N^\alpha}$ the probability measure on the space of càdlàg trajectories from $[0, T]$ to $\Omega_N$ corresponding to the process $\{\eta_t^N, t\in[0, T]\}$ started at its stationary measure $\nu_N^\alpha$, and by $\expected_{\nu_N^\alpha}[\,\cdot\,]$ expectations with respect to $\mathbb{P}_{\nu_N^\alpha}$.

\subsection{Properties of the Invariant Measure}
We state here some results about the stationary measure $\nu_N^\alpha$, introduced in \eqref{eq:nuN_alpha}, whose proofs are postponed to Appendix \ref{sec:appendix}.

The first one regards the asymptotic behaviour of the partition function \eqref{eq:partition}:

\begin{lemma}\label{lemma:partition}
If $\alpha>1$, then the partition function $Z_N^\alpha$ satisfies
\begin{equation*}
    \mylim_{N\to\infty} \frac{Z_N^\alpha}{2^N}=1.
\end{equation*}
\end{lemma}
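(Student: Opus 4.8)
The plan is to compute $Z_N^\alpha = \int_{\Omega_N} e^{-N^{-\alpha} H(\eta)}\, = \sum_{\eta \in \{0,1\}^{\mathbb{T}_N}} e^{-N^{-\alpha} H(\eta)}$ by showing that the exponential factor is uniformly close to $1$. Since $H(\eta) = \sum_{y \in \mathbb{T}_N} \eta_y \eta_{y+1}$ satisfies $0 \le H(\eta) \le N$ for every configuration, we have the deterministic bounds
\begin{equation*}
    e^{-N^{1-\alpha}} \le e^{-N^{-\alpha} H(\eta)} \le 1
\end{equation*}
for all $\eta \in \Omega_N$. Summing over the $2^N$ configurations gives
\begin{equation*}
    2^N e^{-N^{1-\alpha}} \le Z_N^\alpha \le 2^N,
\end{equation*}
hence $e^{-N^{1-\alpha}} \le Z_N^\alpha / 2^N \le 1$. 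Since $\alpha > 1$ implies $N^{1-\alpha} \to 0$ as $N \to \infty$, the lower bound tends to $1$, and the squeeze theorem yields $\lim_{N\to\infty} Z_N^\alpha / 2^N = 1$.

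This is essentially the whole argument; there is no serious obstacle, as the key input is simply the crude uniform bound $0 \le H(\eta) \le N$ together with $\alpha > 1$. One could alternatively organize the estimate probabilistically: write $Z_N^\alpha / 2^N = E_{\mathrm{unif}}[e^{-N^{-\alpha} H(\eta)}]$ where the expectation is under the uniform measure on $\Omega_N$, note that $e^{-N^{-\alpha}H} \to 1$ pointwise and is dominated by $1$, and apply dominated convergence; but the deterministic squeeze is cleaner and gives an explicit rate. If a more refined asymptotic expansion of $Z_N^\alpha$ were needed elsewhere (e.g. for computing $\bar\rho_N$ or correlations in Lemma \ref{lemma:correlations}), one would instead expand $e^{-N^{-\alpha}H(\eta)} = 1 - N^{-\alpha} H(\eta) + O(N^{-2\alpha} H(\eta)^2)$ and use $E_{\mathrm{unif}}[H] = N/4$, $E_{\mathrm{unif}}[H^2] = O(N^2)$, which would give $Z_N^\alpha / 2^N = 1 - \tfrac{1}{4} N^{1-\alpha} + O(N^{2-2\alpha})$; but the statement as written only requires the limit, so the squeeze argument above suffices.
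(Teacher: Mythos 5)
Your proof is correct, but it takes a genuinely different route from the paper. You use the crude deterministic bound $0\le H(\eta)\le N$ to squeeze $e^{-N^{1-\alpha}}\le Z_N^\alpha/2^N\le 1$ and conclude by $\alpha>1$; the paper instead invokes the transfer matrix representation $Z_N^\alpha=\mathrm{Tr}(T^N)=\lambda_+^N+\lambda_-^N$, discards the $\lambda_-^N$ contribution since $|\lambda_-|<1$, and expands $\log(Z_N^\alpha/2^N)=N\log(\lambda_+/2)=Nf_\alpha(N)$ with $f_\alpha(N)\lesssim N^{-\alpha}$. Your argument is more elementary and self-contained, and it delivers the same quantitative rate $Z_N^\alpha/2^N=1-O(N^{1-\alpha})$, so nothing is lost for the purposes of this lemma (in particular Corollary \ref{corol:bad_measure} only needs $Z_N^\alpha\gtrsim 2^N$, which your lower bound gives directly). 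What the paper's approach buys is uniformity of exposition: the transfer matrix and the eigenvalues $\lambda_\pm$ are needed anyway for Lemmas \ref{lemma:expectation} and \ref{lemma:correlations} and for the $\alpha$-mixing bound in Proposition \ref{prop:fixed_time}, so the partition function asymptotics come for free once that machinery is set up. Your closing remark about the refined expansion via $E_{\mathrm{unif}}[H]=N/4$ is consistent with what the exact eigenvalue computation would give, though neither is required for the statement.
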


We will also be using the following facts about the asymptotic average of occupation variables and about the correlation functions. Note that, unlike Lemma \ref{lemma:partition}, these do not require the restriction $\alpha>1$.

\begin{lemma}\label{lemma:expectation} For any $x\in\mathbb{T}_N$,
\begin{equation*}
   \left|E_{\nu_N^\alpha}[\eta_x]-\frac{1}{2}\right|\lesssim N^{-\alpha}.
\end{equation*}
In particular, $\mylim_{N\to\infty} \bar\rho_N=\frac{1}{2}$.
\end{lemma}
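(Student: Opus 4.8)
The plan is to exploit the transfer-matrix structure of $\nu_N^\alpha$. Write $\lambda:=N^{-\alpha}$ and let $T$ be the $2\times2$ symmetric matrix with entries $T_{\eta,\eta'}=e^{-\lambda\eta\eta'}$ for $\eta,\eta'\in\{0,1\}$, that is $T=\bigl(\begin{smallmatrix}1&1\\1&e^{-\lambda}\end{smallmatrix}\bigr)$. Since $H(\eta)=\sum_{y\in\mathbb{T}_N}\eta_y\eta_{y+1}$ is nearest-neighbour with periodic boundary, one has $Z_N^\alpha=\mathrm{Tr}(T^N)$, and inserting the diagonal projector $P=\mathrm{diag}(0,1)$ onto $\{\eta_x=1\}$ and using cyclicity of the trace gives $E_{\nu_N^\alpha}[\eta_x]=\mathrm{Tr}(P\,T^N)/\mathrm{Tr}(T^N)$ for every $x$ (in particular this quantity is $x$-independent, consistently with the translation invariance of $\nu_N^\alpha$, which is itself immediate from that of $H$ and which is precisely what makes $\bar\rho_N$ well defined). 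Diagonalising $T$ in an orthonormal eigenbasis $\{e_+,e_-\}$ with eigenvalues $\mu_+>\mu_-$ (real, since $T$ is symmetric), and writing $(e_\pm)_2$ for the second coordinate of $e_\pm$, this becomes
\[
  E_{\nu_N^\alpha}[\eta_x]=(e_+)_2^2+\frac{(\mu_-/\mu_+)^N\bigl((e_-)_2^2-(e_+)_2^2\bigr)}{1+(\mu_-/\mu_+)^N}.
\]

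It then remains to estimate the spectral ratio and to locate the top eigenvector, both uniformly as $\lambda\to0$. For the ratio: $\mu_+\mu_-=\det T=e^{-\lambda}-1<0$, so $\mu_-<0$, while $\mu_+\ge\tfrac32$ directly from the quadratic formula; hence $|\mu_-|=(1-e^{-\lambda})/\mu_+\le 1-e^{-\lambda}\le\lambda$ and $\mu_+>|\mu_-|$ (because $\mu_++\mu_-=1+e^{-\lambda}>0$), so $|\mu_-/\mu_+|^N\le\lambda^N\le\lambda=N^{-\alpha}$ and the second term above is $O(N^{-\alpha})$. For the eigenvector: the closed form $\mu_+=\tfrac12\bigl((1+e^{-\lambda})+\sqrt{(1-e^{-\lambda})^2+4}\bigr)$ gives $\mu_+=2+O(\lambda)$, and an (unnormalised) eigenvector for $\mu_+$ is $(1,\mu_+-1)$, so $(e_+)_2^2=(\mu_+-1)^2/\bigl(1+(\mu_+-1)^2\bigr)=\tfrac12+O(\lambda)$ by Lipschitzness of $u\mapsto u/(1+u)$ near $u=1$. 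Substituting both estimates yields $\bigl|E_{\nu_N^\alpha}[\eta_x]-\tfrac12\bigr|\lesssim N^{-\alpha}$, and since $\alpha>0$ this gives $\bar\rho_N=E_{\nu_N^\alpha}[\eta_x]\to\tfrac12$.

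The computation is elementary and I would not expect a real obstacle; the one point to be careful about is that $T$ itself depends on $N$ through $\lambda$, so one genuinely needs the eigenvalue and eigenvector estimates to be quantitative and uniform for $\lambda$ near $0$, rather than statements about a fixed limiting operator — but this is harmless, since the spectral gap of $T$ does not close (indeed $\mu_+/|\mu_-|\to\infty$) and $\mu_+$ stays bounded away from $0$, and everything follows from the two closed-form expressions for $\mu_\pm$ together with $1-e^{-\lambda}\le\lambda$. A more probabilistic alternative, which one could use instead, is to differentiate in $\lambda$: $\tfrac{d}{d\lambda}E_\lambda[\eta_1]=-\sum_{y}\mathrm{Cov}_\lambda(\eta_1,\eta_y\eta_{y+1})$, which is bounded uniformly in $N$ and in $\lambda$ near $0$ by the exponential decay of correlations of this one-dimensional model (cf.\ Lemma \ref{lemma:correlations}); integrating from $\lambda=0$, where the measure is the symmetric Bernoulli product measure with mean $\tfrac12$, yields the same bound. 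I would present the direct transfer-matrix argument, as it is the shortest and entirely self-contained.
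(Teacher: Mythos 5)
Your argument is correct and is essentially the paper's own proof: both write $E_{\nu_N^\alpha}[\eta_x]$ as a ratio of transfer-matrix traces, isolate the top-eigenvalue contribution (your $(e_+)_2^2$ is exactly the paper's $\tfrac{\lambda_+-1}{\lambda_+-\lambda_-}$ in the limit), and read off the $O(N^{-\alpha})$ deviation from the explicit formulas for $\lambda_\pm$, with the subdominant term controlled by $|\lambda_-/\lambda_+|^N$. The only differences are presentational (orthonormal eigenbasis and projector versus the explicit $Q\Lambda Q^{-1}$ entries), and your handling of the $N$-dependence of $T$ is, if anything, slightly more careful than the paper's.
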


\begin{lemma}\label{lemma:correlations}
Let $m\ge 2$ and, given $x_1<\ldots<x_m\in\mathbb{T}_N$, let $d_j:=x_{j+1}-x_j$ for each $j=1, \ldots, m$, where $x_{m+1}:=x_1+N$ (so that, in particular, $d_1+\ldots+d_m=N$). Then,
\begin{equation}\label{eq:correlation_bound}
    \left|E_{\nu_N^\alpha}\left[\prod_{j=1}^{m} \bar\eta_{x_j}\right]\right|\lesssim N^{-\alpha d},
\end{equation}
where $d:=\mymin\{d_1, \ldots, d_m\}$.
\end{lemma}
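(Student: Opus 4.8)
The plan is to work directly with the explicit Gibbs structure of $\nu_N^\alpha$, exploiting that the Hamiltonian $H(\eta) = \sum_y \eta_y \eta_{y+1}$ is a sum of nearest-neighbour terms, so that $e^{-N^{-\alpha}H(\eta)}$ factorises into a product of local weights along the ring. Writing $\lambda = N^{-\alpha}$ and $w(\eta_y,\eta_{y+1}) = e^{-\lambda \eta_y\eta_{y+1}}$, one recognises $\nu_N^\alpha$ as a (translation-invariant) Markov chain on the cycle $\mathbb{T}_N$, i.e.\ a transfer-matrix model with $2\times 2$ transfer matrix $T = \begin{pmatrix} 1 & 1 \\ 1 & e^{-\lambda}\end{pmatrix}$. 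The key point is that $T = J + O(\lambda)$ where $J$ is the all-ones matrix, which has rank one; equivalently, at $\lambda = 0$ the measure is i.i.d.\ uniform, and for $\lambda = N^{-\alpha}$ the chain is a small perturbation of the i.i.d.\ one. I would then express $E_{\nu_N^\alpha}\big[\prod_{j=1}^m \bar\eta_{x_j}\big]$ via the transfer matrix: insert the diagonal matrices $D = \mathrm{diag}(0,1)$ (for $\eta_x$) or $D - \rho_N I$ (for $\bar\eta_x$) at the positions $x_1 < \cdots < x_m$, obtaining
\[
E_{\nu_N^\alpha}\Big[\prod_{j=1}^m \bar\eta_{x_j}\Big] = \frac{\mathrm{tr}\big( T^{d_1}(D-\rho_N I)\,T^{d_2}(D-\rho_N I)\cdots T^{d_m}(D-\rho_N I)\big)}{\mathrm{tr}\,T^N}.
\]

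The heart of the argument is the spectral/perturbative analysis of this trace. I would diagonalise $T$: its eigenvalues are $\mu_\pm = 1 + \tfrac12(e^{-\lambda}-1) \pm \sqrt{1 + \tfrac14(e^{-\lambda}-1)^2}$, with $\mu_+ = 2 + O(\lambda)$ the Perron eigenvalue and $\mu_- = -\tfrac12 + O(\lambda)$; crucially the gap ratio is $|\mu_-/\mu_+| = \tfrac14 + O(\lambda)$, bounded away from $1$. The right and left Perron eigenvectors $v_+, u_+$ converge to the uniform vector $\tfrac{1}{\sqrt2}(1,1)^{\mathsf T}$ as $\lambda \to 0$, and the correction is $O(\lambda) = O(N^{-\alpha})$. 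The decisive fact is that the centring $D - \rho_N I$ is chosen precisely so that $u_+^{\mathsf T}(D - \rho_N I) v_+ = E_{\nu_N^\alpha}[\bar\eta_x] = 0$ — the Perron-projected contribution of each inserted matrix vanishes identically. Therefore, when one expands each $T^{d_j} = \mu_+^{d_j} P_+ + \mu_-^{d_j} P_-$ (with $P_\pm$ the spectral projectors) in the cyclic product, every surviving term must contain at least one factor $P_-$, or else be killed by the vanishing of $u_+^{\mathsf T}(D-\rho_N I)v_+$ — more precisely, between two consecutive insertions one cannot have $P_+ (D-\rho_N I) P_+$ contributing its leading piece. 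A careful bookkeeping shows that each "gap" of length $d_j$ that is traversed by the subleading projector contributes a factor $(\mu_-/\mu_+)^{d_j} = O(4^{-d_j})$, while gaps traversed by $P_+$ contribute $O(1)$ but force the adjacent insertion matrices to act through their small off-diagonal/perturbative components, each costing $O(\lambda) = O(N^{-\alpha})$.

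Assembling these estimates, the dominant surviving contribution comes from routing the subleading projector through the single shortest gap, of length $d = \min_j d_j$, while letting the Perron projector carry the remaining $m-1$ gaps; the two insertion matrices flanking the short gap then contribute a bounded constant, but the geometric factor $(\mu_-/\mu_+)^d$ together with the perturbative corrections on the long gaps yields a bound $O(4^{-d}) + O(N^{-\alpha})\cdot(\text{geometric in the other gaps})$. Since $4^{-d} = e^{-d\log 4} \le e^{-d N^{-\alpha}\cdot(\log 4)\,N^{\alpha}}$ is far smaller than $N^{-\alpha d}$ whenever $d \le N^{\alpha}$ (and for $d$ larger one simply uses that the bound is then vacuously dominated), and the perturbative terms each scale like $N^{-\alpha}$ per short gap crossed, one obtains the claimed bound $|E_{\nu_N^\alpha}[\prod_j \bar\eta_{x_j}]| \lesssim N^{-\alpha d}$. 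I would finally note that the denominator $\mathrm{tr}\,T^N = \mu_+^N(1 + (\mu_-/\mu_+)^N) = \mu_+^N(1 + o(1))$ is harmless and is precisely what gives Lemma \ref{lemma:partition} as the special case $m=0$.

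The main obstacle I anticipate is the combinatorial bookkeeping of the transfer-matrix expansion: one must argue uniformly over the number $m$ of insertions and over all configurations of gap lengths $(d_1,\dots,d_m)$ that the product of geometric factors and perturbative $N^{-\alpha}$-factors always collapses to $N^{-\alpha d}$ with a constant depending only on $m$ (not on $N$ or the $x_j$). Getting the power of $N^{-\alpha}$ sharp — rather than, say, $N^{-\alpha d/2}$ — requires carefully tracking that crossing the shortest gap by the subleading mode already produces a factor decaying much faster than $N^{-\alpha d}$, so that the clean polynomial bound is really driven by how many factors of the perturbation $\lambda$ are unavoidable, which in turn is controlled by the vanishing of the centred Perron matrix element. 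An alternative, possibly cleaner route — which I would fall back on if the transfer-matrix bookkeeping becomes unwieldy — is a high-temperature cluster expansion: expand $e^{-\lambda \eta_y\eta_{y+1}} = 1 + (e^{-\lambda}-1)\eta_y\eta_{y+1}$ over all bonds, so that $\nu_N^\alpha$ becomes a convex-combination/polymer model in the small parameter $|e^{-\lambda}-1| \asymp N^{-\alpha}$; then a connected-graph argument shows that any correlation among $m$ sites requires a connected cluster of activated bonds joining them, and the cheapest such cluster linking all $m$ points around the cycle has length at least $d = \min_j d_j$ (it must bridge the shortest arc), giving $N^{-\alpha d}$ directly. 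Either way, the conceptual input is the same: the nearest-neighbour Gibbs weight is a rank-one-plus-$O(N^{-\alpha})$ object, correlations are generated only through the $O(N^{-\alpha})$ part, and bridging the shortest gap costs $d$ such factors.
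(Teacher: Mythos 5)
Your framework is exactly the paper's: the transfer matrix $T$, the trace formula with centred insertions $D-\rho_N I$, and the observation that the centring essentially kills the Perron--Perron matrix element, forcing the subleading mode to cross at least one gap (this matrix element is in fact only $O((\lambda_-/\lambda_+)^N)$ rather than exactly zero on the torus, but that is harmless). The genuine gap is in the quantitative heart of the argument, precisely where the factor $N^{-\alpha d}$ must be produced. You assert that the subleading eigenvalue is $\mu_-=-\tfrac12+O(\lambda)$ and hence that the gap ratio $|\mu_-/\mu_+|=\tfrac14+O(\lambda)$ is a constant. This contradicts your own (correct) remark that $T=J+O(\lambda)$ with $J$ the rank-one all-ones matrix: at $\lambda=0$ the eigenvalues are $2$ and $0$, and indeed the paper's \eqref{eq:lambdas} gives $\lambda_-=-\tfrac12N^{-\alpha}+O(N^{-2\alpha})$, so the gap ratio is $|\lambda_-/\lambda_+|\asymp N^{-\alpha}$, not $\tfrac14$. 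The error then propagates to your final comparison: the claim that $4^{-d}$ is ``far smaller than $N^{-\alpha d}$ whenever $d\le N^\alpha$'' is backwards, since $N^{-\alpha d}=e^{-\alpha d\log N}$ while $4^{-d}=e^{-d\log 4}$, and $\alpha\log N\gg\log 4$ for large $N$; thus $4^{-d}\gg N^{-\alpha d}$ for every $d\ge1$. As written, your mechanism (constant gap ratio plus perturbative $O(N^{-\alpha})$ factors counted per insertion) can only yield an $N$-independent exponential decay $c^{-d}$ plus finitely many powers of $N^{-\alpha}$, neither of which implies \eqref{eq:correlation_bound}: the exponent $\alpha d$, proportional to a \emph{distance}, cannot arise this way.

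The repair is exactly the corrected eigenvalue, and it is what the paper does: since $\lambda_-=O(N^{-\alpha})$, crossing a gap of length $d_j$ with the subleading spectral projector costs $(\lambda_-/\lambda_+)^{d_j}\lesssim (N^{-\alpha}/4)^{d_j}\le N^{-\alpha d_j}\le N^{-\alpha d}$, and since every surviving term in the cyclic expansion must cross at least one gap with the subleading mode, the bound follows with no further perturbative bookkeeping (the paper carries this out explicitly for $m=2$ from the identity $E_{\nu_N^\alpha}[\prod_j\eta_{x_j}]=\prod_j(T^{d_j})_{1,1}/Z_N^\alpha$ and asserts the general case is analogous). Your fallback high-temperature expansion is actually closer to the true mechanism -- each activated bond costs $e^{-N^{-\alpha}}-1=O(N^{-\alpha})$ and bridging the shortest arc requires $d$ bonds -- but in your primary route both the spectral computation and the concluding inequality are incorrect and must be replaced.
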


We highlight that, in the result above, the number of sites $m$ does not appear on the right-hand side of \eqref{eq:correlation_bound}, as the bound only depends on the minimum distance between any pair of considered points.

Finally, the result that follows guarantees that any limit point $\{\mathscr{Y}_t, t\in[0, T]\}$ of the sequence of fields $\{\mathscr{Y}_t^N, t\in[0, T]\}_N$ satisfies item i) of Definition \ref{def:energy_solutions}.

\begin{proposition}\label{prop:fixed_time}
Let $g:\mathbb{T}\to\mathbb{R}$ be a continuous map: for any $\alpha>0$, under the stationary measure $\nu_N^\alpha$, the random variable
\begin{equation*}
    \mathscr{X}^N:=\frac{1}{\sqrt{N}}\sum_{x\in\mathbb{T}_N} g\Bigl(\frac{x}{N}\Bigr)\bar\eta_x
\end{equation*}
converges in distribution to a mean-zero Gaussian of variance $\frac{1}{4}\int_{\mathbb{T}}g(u)^2\de u$.
\end{proposition}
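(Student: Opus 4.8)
## Proof Proposal for Proposition~\ref{prop:fixed_time}

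\textbf{Overall strategy.} The plan is to establish Gaussian convergence of $\mathscr{X}^N$ by invoking a central limit theorem for stationary $\alpha$-mixing (strongly mixing) sequences, using Lemma~\ref{lemma:correlations} to produce the required quantitative control on the mixing coefficients. The occupation variables $\{\bar\eta_x\}_{x\in\mathbb{T}_N}$ are identically distributed under $\nu_N^\alpha$ but not independent, so the classical Lindeberg-Feller CLT does not apply directly; however, the exponential decay in the correlation bound $|E_{\nu_N^\alpha}[\prod_j\bar\eta_{x_j}]|\lesssim N^{-\alpha d}$ is far stronger than what is needed for mixing-type CLTs, since the spatial separation $d$ enters the exponent. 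First I would reduce to computing the limiting variance, then verify the mixing condition, and finally assemble the CLT.

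\textbf{Step 1: the limiting variance.} I would compute $\Var_{\nu_N^\alpha}(\mathscr{X}^N) = \frac{1}{N}\sum_{x,y}g(\tfrac{x}{N})g(\tfrac{y}{N})E_{\nu_N^\alpha}[\bar\eta_x\bar\eta_y]$. By Lemma~\ref{lemma:expectation}, $\Var_{\nu_N^\alpha}(\bar\eta_x)=E_{\nu_N^\alpha}[\bar\eta_x^2]=\bar\rho_N(1-\bar\rho_N)\to\tfrac14$, which gives the diagonal contribution $\frac{1}{N}\sum_x g(\tfrac{x}{N})^2\cdot\bar\rho_N(1-\bar\rho_N)\to\frac14\int_{\mathbb{T}}g(u)^2\,\de u$ by a Riemann-sum argument using continuity of $g$. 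For the off-diagonal terms, Lemma~\ref{lemma:correlations} with $m=2$ gives $|E_{\nu_N^\alpha}[\bar\eta_x\bar\eta_y]|\lesssim N^{-\alpha d}$ where $d=\min\{|x-y|,N-|x-y|\}$; since $g$ is bounded, the off-diagonal sum is bounded by $\frac{C}{N}\sum_{x}\sum_{d\ge1} N^{-\alpha d}\lesssim \frac{1}{N}\cdot N\cdot\frac{N^{-\alpha}}{1-N^{-\alpha}}\to 0$. Hence $\Var_{\nu_N^\alpha}(\mathscr{X}^N)\to\frac14\int_{\mathbb{T}}g(u)^2\,\de u=:\sigma_g^2$.

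\textbf{Step 2: mixing coefficients and the CLT.} I would estimate the $\alpha$-mixing coefficient of the field under $\nu_N^\alpha$: for index sets separated by a gap of length $r$, the mixing coefficient $\alpha(r)$ should be controllable via the correlation decay — indeed, a standard argument bounds the total-variation-type discrepancy between the joint law on two separated blocks and the product of marginals by a sum of correlation functions, each carrying a factor $N^{-\alpha r'}$ with $r'\ge r$, so $\alpha(r)\lesssim N^{-\alpha r}$ (up to combinatorial factors that are absorbed by the exponential decay once $r\ge 1$). This decay is more than enough to satisfy the hypotheses of a triangular-array CLT for $\alpha$-mixing sequences (e.g.\ the Bernstein block decomposition, or a Stein's method / cumulant argument): one splits $\mathbb{T}_N$ into big blocks of length $\ell_N\to\infty$ separated by small blocks of length $r_N\to\infty$ with $r_N/\ell_N\to0$, shows the small-block contributions are negligible in $L^2$ using Step~1-type bounds, shows the big-block sums are asymptotically independent using the mixing bound, and applies Lindeberg to the big-block sums (whose summands are bounded by $\ell_N/\sqrt{N}$). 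Alternatively — and perhaps more cleanly given the \emph{exponential} decay available here — I would use the method of moments: show that all cumulants of $\mathscr{X}^N$ of order $\ge 3$ vanish in the limit. The $k$-th cumulant is a sum over $k$-tuples $(x_1,\dots,x_k)$ of (sums of products of) joint correlation functions scaled by $N^{-k/2}$; the cluster/tree structure of cumulants together with Lemma~\ref{lemma:correlations} forces at least $k-1$ ``short'' bonds, each contributing a factor $N^{-\alpha}$, while the number of such tuples is $O(N)$ (one free position, the rest confined near it), so the $k$-th cumulant is $O(N^{1-k/2}N^{-\alpha(k-1)})\to0$ for $k\ge3$. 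Combined with Step~1 identifying the second cumulant, this yields convergence to $\mathcal{N}(0,\sigma_g^2)$.

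\textbf{Main obstacle.} The delicate point is Step~2: making rigorous either the bound $\alpha(r)\lesssim N^{-\alpha r}$ on the mixing coefficients (this requires a careful argument relating strong mixing to the decay of \emph{all} joint correlation functions between the two blocks, not just pairwise ones, and controlling the combinatorial proliferation of terms) or, in the cumulant approach, the bookkeeping of which bonds in a connected cumulant diagram are forced to be short and the resulting power counting in $N$. Given that the paper has already invested in Lemma~\ref{lemma:correlations} with its clean exponential bound depending only on the minimal gap, I expect the cumulant route to be the cleanest, with the key lemma being that in a connected partition contributing to the $k$-th cumulant, the points cannot all be mutually far apart, so the product of correlation bounds beats the $O(N^{k/2})$ worst-case count of tuples by a wide margin.
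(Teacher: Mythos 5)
Your Step 1 (the variance computation) is correct and coincides with what the paper does: the diagonal terms give $\bar\rho_N(1-\bar\rho_N)\to\tfrac14$ by Lemma \ref{lemma:expectation} and a Riemann sum, and the off-diagonal terms are $O(N^{-\alpha})$ by Lemma \ref{lemma:correlations}. The overall strategy -- a CLT for $\alpha$-mixing triangular arrays -- is also the paper's. The gap is exactly where you flag it: the bound $\alpha_N(r)\lesssim N^{-\alpha r}$ on the mixing coefficients is asserted via ``a standard argument'' from the correlation functions, but this is not standard and is the only genuinely nontrivial step. The supremum in the definition of $\alpha(\mathcal{A},\mathcal{B})$ runs over \emph{all} events measurable with respect to two blocks of sites; expanding arbitrary indicators of such events into the monomials $\prod_j\bar\eta_{x_j}$ controlled by Lemma \ref{lemma:correlations} produces of order $2^{|I|+|J|}$ terms, and since the blocks can have length of order $N$ this combinatorial factor is not absorbed by a single factor $N^{-\alpha r}$. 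The paper closes this gap by a different mechanism that bypasses the correlation functions entirely: using the transfer matrix \eqref{def:transfer_matrix} and its Perron eigenvector, it performs a Doob-type transform to exhibit $\nu_N^\alpha$ as the law of a stationary two-state Markov chain (conditioned to return to its starting point after $N$ steps) whose second eigenvalue is $\lambda_-/\lambda_+=O(N^{-\alpha})$; the mixing coefficient of an ergodic finite-state chain then decays geometrically in the spectral gap, giving $\alpha_N(r)\lesssim N^{-\alpha r}$ uniformly over block choices. With that bound in hand, the paper verifies the two hypotheses \eqref{condition:variance_ratio} and \eqref{condition:lindeberg} of Rio's triangular-array CLT (Theorem \ref{thm:alpha_clt}), the second via the trivial quantile bound $Q_{N,x}(u)\lesssim N^{-1/2}u^{-1/\ell}$ and a summation over the level sets of $\alpha_N^{-1}$.

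Your alternative cumulant route is plausible in principle but is also left at the level of a sketch, and the stated power counting is not quite right: Lemma \ref{lemma:correlations} bounds \emph{moments} of pairwise distinct points by $N^{-\alpha d}$ with $d$ the \emph{minimal} gap, so for a tight cluster of $k$ points the joint cumulant is merely $O(N^{-\alpha})$ (or $O(1)$ once indices coincide, a case the lemma does not cover and which must be reduced using $\eta_x^2=\eta_x$), not $O(N^{-\alpha(k-1)})$. The correct counting is that clustered tuples contribute $O(N^{1-k/2})$, which still vanishes for $k\ge3$, but turning the minimal-gap moment bound into a maximal-gap (connectivity) bound on cumulants is itself a nontrivial cluster-expansion step. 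In short: the skeleton of your argument is right and the variance step is complete, but the mixing estimate -- the heart of the proof -- is missing, and the paper's transfer-matrix/Markov-chain representation is the tool you would need to supply it.
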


\subsection{Dynkin's Martingales}\label{sec:dynkin}
By Dynkin's formula (see for example \cite[Appendix 1]{kl99}), for any $\phi\in\mathfrak{D}(\mathbb{T})$, the processes $\{\mathscr{M}^N_t(\phi), t\in[0, T]\}$ and $\{\mathscr{N}^N_t(\phi), t\in[0, T]\}$ defined via
\begin{align}
    &\mathscr{M}_t^N(\phi):=\mathscr{Y}_t^N(\phi)-\mathscr{Y}_0^N(\phi)-\int_0^t(N^2\mathcal{L}^N+\partial_s)\mathscr{Y}_s^N(\phi)\de s,\label{eq:dynkin_formula}
    \\&\mathscr{N}_t^N(\phi):=\mathscr{M}_t^N(\phi)^2-\int_0^t \left\{N^2\mathcal{L}^N\mathscr{Y}_s^N(\phi)^2-2N^2\mathscr{Y}_s^N(\phi)\mathcal{L}^N\mathscr{Y}_s^N(\phi)\right\}\de s \label{eq:quad_var}
\end{align}
are both martingales with respect to the natural filtration of $\{\mathscr{Y}_t^N(\phi), t\in[0, T]\}$. 

A simple computation shows that, for each $\phi\in\mathfrak{D}(\mathbb{T})$, the predictable quadratic variation of $\{\mathscr{M}_t^N(\phi), t\in[0, T]\}$ in \eqref{eq:quad_var} can be written as
\begin{equation*}
    \left\langle\mathscr{M}^N(\phi)\right\rangle_t=\int_0^t\frac{1}{N}\sum_{x\in\mathbb{T}_N}c_{x, x+1}^N(\eta_{sN^2})[\eta_{sN^2}(x)-\eta_{sN^2}(x+1)]^2\nabla_N\phi\left(\frac{x}{N}\right)^2\de s.
\end{equation*}
Recalling the choice of rates
\begin{equation}\label{eq:our_rates}
    \begin{split}c_{x,x+1}^N(\eta)&=\Bigl(\frac{1}{2}+\frac{b}{2N^\gamma}\Bigr)\eta_x(1-\eta_{x+1})\{(\eta_{x-1}+\eta_{x+2})+\eps_N(\eta_{x-1}-\eta_{x+2})\}
    \\&\phantom{=}+\Bigl(\frac{1}{2}-\frac{b}{2N^\gamma}\Bigr)\eta_{x+1}(1-\eta_x)\{(\eta_{x-1}+\eta_{x+2})-\eps_N(\eta_{x-1}-\eta_{x+2})\}
    \end{split}
\end{equation}
and using Lemmas \ref{lemma:expectation} and \ref{lemma:correlations}, it is immediate to see that
\begin{equation*}
    \mylim_{N\to\infty}\expected_{\nu_N^\alpha}\left[\left\langle\mathscr{M}^N(\phi)\right\rangle_t\right]=\mylim_{N\to\infty}2t\bar\rho_N^2(1-\bar\rho_N)\frac{1}{N}\sum_{x\in\mathbb{T}_N}\nabla_N\phi\left(\frac{x}{N}\right)^2=\frac{t}{4}\|\nabla\phi\|^2_{L^2(\mathbb{T})}.
\end{equation*}
By a standard argument (see for example \cite[Theorem VIII.3.11]{js03}), this implies that $\{\mathscr{M}_t^N, t\in[0, T]\}_N$ converges in distribution in $\mathcal{D}([0, T], \mathfrak{D}'(\mathbb{T}))$ to $\frac{1}{2}\nabla\dot{\mathscr{W}}_t$, with $\{\mathscr{W}_t, t\in[0, T]\}$ a $\mathfrak{D}'(\mathbb{T})$-valued Brownian motion. Moreover one can check that, defining
\begin{equation*}
    h(\eta):=\eta_0\eta_1+\eta_0\eta_{-1}-\eta_{-1}\eta_1
\end{equation*}
and
\begin{equation*}
    g(\eta):=\eta_0\eta_{-1}+\eta_0\eta_1+\eta_{-1}\eta_1-2\eta_{-1}\eta_0\eta_1,
\end{equation*} 
the integral term in \eqref{eq:dynkin_formula} can be written as 
\begin{align}
    \int_0^t&(N^2\mathcal{L}^N+\partial_s)\mathscr{Y}_s^N(\phi)\de s=-\frac{\boldsymbol{v}_N}{N\sqrt{N}}\int_0^t\sum_{x\in\mathbb{T}_N}\phi'\Bigl(\frac{x-\boldsymbol{v}_N s}{N}\Bigr)\bar{\eta}_{sN^2}(x)\de s\label{eq:velocity_term}
    \\&+\frac{1}{2\sqrt{N}}\int_0^t \sum_{x\in\mathbb{T}_N}\Delta_N\phi\Bigl(\frac{x-\boldsymbol{v}_N s}{N}\Bigr)h(\tau_x\eta_{sN^2})\de s\label{eq:h_term}
    \\&+(4b\bar\rho_N-6b\bar\rho_N^2) N^{\frac{1}{2}-\gamma}\int_0^t\sum_{x\in\mathbb{T}_N}\nabla_N\phi\Bigl(\frac{x-\boldsymbol{v}_N s}{N}\Bigr)\bar\eta_{sN^2}(x)\de s\label{eq:linear_term}
    \\\begin{split}&+(b-3b\bar\rho_N)N^{\frac{1}{2}-\gamma}\int_0^t\sum_{x\in\mathbb{T}_N}\nabla_N\phi\Bigl(\frac{x-\boldsymbol{v}_N s}{N}\Bigr)\times\\&\phantom{+(b-3b\bar\rho_N)N^{\frac{1}{2}-\gamma}}\times[\bar\eta_{sN^2}(x)\bar\eta_{sN^2}(x+1)+\bar\eta_{sN^2}(x)\bar\eta_{sN^2}(x+2)]\de s\end{split}\label{eq:quadratic_term}
    \\&-2b N^{\frac{1}{2}-\gamma}\int_0^t\sum_{x\in\mathbb{T}_N}\nabla_N\phi\Bigl(\frac{x-\boldsymbol{v}_N s}{N}\Bigr)[\bar\eta_{sN^2}(x)\bar\eta_{sN^2}(x+1)\bar\eta_{sN^2}(x+2)]\de s\label{eq:cubic_term}
    \\\begin{split}&+\frac{\eps_N}{2\sqrt{N}}\int_0^t\sum_{x\in\mathbb{T}_N}\Delta_N\phi\Bigl(\frac{x-\boldsymbol{v}_N s}{N}\Bigr)g(\tau_x\eta_{sN^2})\de s
    \\&+b\eps_N N^{\frac{1}{2}-\gamma}\int_0^t\sum_{x\in\mathbb{T}_N}\nabla_N\phi\Bigl(\frac{x-\boldsymbol{v}_N s}{N}\Bigr)\eta_{sN^2}(x)[\eta_{sN^2}(x+1)-\eta_{sN^2}(x+2)]\de s
    \end{split}\nonumber
\end{align}
plus a term which vanishes in $L^2(\prob_{\nu_N^\alpha})$ as $N\to\infty$.

Since $|\eps_N|\lesssim N^{-\alpha}$ with $\alpha>1$, it is immediate to see that the two bottom lines of the equation above vanish in $L^2(\prob_{\nu_N^\alpha})$ as $N\to\infty$. Now, by the choice of the transport velocity $\boldsymbol{v_N}$ in \eqref{eq:transport_velocity} and a Taylor expansion of $\phi$, we see that $\eqref{eq:velocity_term}+\eqref{eq:linear_term}$ vanishes in $L^2(\prob_{\nu_N^\alpha})$ as $N\to\infty$. In fact, the transport velocity was chosen exactly so as to ``kill" these two degree-one terms, which would otherwise diverge. The linear term \eqref{eq:h_term} can be treated as in \cite[Section 4a]{bgs17}, and in particular it can be rewritten as
\begin{equation*}
    -\int_0^t\frac{1}{2\sqrt{N}}\sum_{x\in\mathbb{T}_N}2\bar\rho_N\Delta_N \phi\Bigl(\frac{x}{N}\Bigr)\bar\eta_{sN^2}(x)\de s=-\bar\rho_N\int_0^t\mathscr{Y}_s^N(\Delta_N\phi)\de s
\end{equation*}
plus a term which vanishes in $L^2(\prob_{\nu_N^\alpha})$. Using Lemma \ref{lemma:expectation}, it is standard to conclude that this yields the diffusive contribution in \eqref{eq:limit_OU} or \eqref{eq:limit_SBE}. Finally, once the quadratic term has been handled, the cubic term \eqref{eq:cubic_term} can be treated in the same way as in \cite[Lemma 6.2]{bgs17}, and in particular, one shows that it vanishes in $L^2(\prob_{\nu_N^\alpha})$ as $N\to\infty$. Indeed, in light of Remark \ref{remark:varphi}, all the lemmas in Section \ref{sec:proof_gen_bg} extend to more general functions $\varphi$, provided they are of local polynomial form and satisfy the support condition. For a detailed treatment we refer the reader to \cite{bgs17}, and omit the proof here.

The most challenging term is indeed the quadratic one, namely \eqref{eq:quadratic_term}, which we will treat in the two sections that follow.

\subsection{Restriction of Configurations}
As previously noted and similarly to the porous media model, \textbf{\textsc{Assumption 3}} of Theorem \ref{thm:gen_bg} does not hold because of the potential degeneracy of the rates, so we first need to show that we can restrict \eqref{eq:quadratic_term} to an appropriate set of configurations. The idea is to split terms taking into account sets that have at least two particles at a distance at most two -- what we call a \textit{mobile cluster} -- and their complement. We will call the former \textit{good configurations}, namely configurations that have at least one mobile cluster, and the latter \textit{bad configurations}.

We start by noting that, from Lemma \ref{lemma:partition}, we can deduce the following straightforward corollary. Following \cite[Section 2.3]{bgs17}, fix $x\in\mathbb{T}_N$ and an integer $1\le \ell\le N$: we define the set of \textit{good configurations} inside the box $\Lambda_x^\ell:=\{x+1, \ldots, x+\ell\}$ of size $\ell$ to the right of $x$ as 
\begin{equation}\label{eq:good_configs}
    \mathscr{G}_\ell(x)=\left\{\eta\in\Omega_N: \sum_{y=x}^{x+\ell-2} \eta_y(\eta_{y+1}+\eta_{y+2})+\eta_{x+\ell-1}\eta_{x+\ell}>0\right\},
\end{equation}
as well as its complement of \textit{bad configurations}  as
\begin{equation}\label{eq:bad_configs}
    \mathscr{B}_\ell(x):=\mathscr{G}_\ell(x)^c=\left\{\eta: \sum_{y=x}^{x+\ell-2} \eta_y(\eta_{y+1}+\eta_{y+2})+\eta_{x+\ell-1}\eta_{x+\ell}=0\right\}.
\end{equation}

\begin{corollary}\label{corol:bad_measure}
If $\alpha>1$, for each $x\in\mathbb{T_N}$, the $\nu_N^\alpha$-measure of the bad configurations $\mathscr{B}_\ell(x)$ decays exponentially in $\ell$, and in particular
\begin{equation}
    \nu_N^\alpha(\mathscr{B}_\ell(x))\lesssim 2^{-\lfloor\frac{\ell}{3}\rfloor}.
\end{equation}
\end{corollary}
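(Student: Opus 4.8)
The plan is to reduce the estimate to a purely combinatorial count of bad configurations weighed against the uniform measure, with Lemma \ref{lemma:partition} taking care of the partition function (this is the only place $\alpha>1$ is used). First I would unpack the definition \eqref{eq:bad_configs}: since all its summands are nonnegative, a configuration $\eta$ belongs to $\mathscr{B}_\ell(x)$ if and only if $\eta_y\eta_{y+1}=0$ for every $y\in\{x,\dots,x+\ell-1\}$ and $\eta_y\eta_{y+2}=0$ for every $y\in\{x,\dots,x+\ell-2\}$; equivalently, no two occupied sites inside the block $\{x,x+1,\dots,x+\ell\}$ lie at distance $1$ or $2$. The $N-(\ell+1)$ sites outside this block are entirely unconstrained.

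Next, partition the block $\{x,\dots,x+\ell\}$, which has $\ell+1$ sites, into $k:=\lfloor(\ell+1)/3\rfloor$ disjoint consecutive triples $B_1,\dots,B_k$ of three sites each (discarding a leftover of at most two sites). Every such triple sits inside the range of indices over which the constraints above are active, and within a triple every pair of sites is at distance $1$ or $2$; hence the sparsity condition forces at most one occupied site per triple. Using only these within-triple constraints (ignoring the cross-triple ones, which can only reduce the count further), each $B_j$ admits at most $4$ local configurations — the empty one and the three with a single particle — rather than $2^3$. Since the remaining $N-3k$ sites are free, this yields
\[
 |\mathscr{B}_\ell(x)|\;\le\;4^{k}\cdot 2^{\,N-3k}\;=\;2^{\,N-k}.
\]

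Finally, because $H(\eta)\ge 0$ we have $e^{-N^{-\alpha}H(\eta)}\le 1$, so from \eqref{eq:nuN_alpha}–\eqref{eq:partition},
\[
 \nu_N^\alpha(\mathscr{B}_\ell(x))=\frac{1}{Z_N^\alpha}\sum_{\eta\in\mathscr{B}_\ell(x)}e^{-N^{-\alpha}H(\eta)}\;\le\;\frac{|\mathscr{B}_\ell(x)|}{Z_N^\alpha}\;\le\;\frac{2^{N}}{Z_N^\alpha}\,2^{-k}.
\]
By Lemma \ref{lemma:partition} the prefactor $2^N/Z_N^\alpha$ tends to $1$, hence is bounded uniformly in $N$, and since $k=\lfloor(\ell+1)/3\rfloor\ge\lfloor\ell/3\rfloor$ we conclude $\nu_N^\alpha(\mathscr{B}_\ell(x))\lesssim 2^{-\lfloor\ell/3\rfloor}$, as claimed. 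There is no genuine obstacle in this argument; the only points needing a little care are keeping the bookkeeping between the two floor expressions consistent and checking that each triple $B_j$ indeed lies within the index range where the constraints in \eqref{eq:bad_configs} are active, so that it contributes the factor $4$ rather than $8$.
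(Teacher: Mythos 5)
Your argument is correct and is essentially the paper's own proof: the same partition of the block into disjoint triples with at most one particle each (giving the factor $4\cdot 2^{-3}=2^{-1}$ per triple), combined with the bound $e^{-N^{-\alpha}H(\eta)}\le 1$ and Lemma \ref{lemma:partition} to control $2^N/Z_N^\alpha$. The only cosmetic difference is that the paper routes the counting through the Bernoulli($\tfrac12$) product measure, whereas you count $|\mathscr{B}_\ell(x)|$ directly; the computation is identical.
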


\begin{proof}
Let $\mu_{\frac{1}{2}}$ denote the Bernoulli product measure on $\{0, 1\}^{\mathbb{T}_N}$ of parameter $\frac{1}{2}$, namely $\mu_{\frac{1}{2}}\{\eta_x=1\}=\frac{1}{2}$. Note that, if $\eta\in\mathscr{B}_\ell(x)$, then any two particles in the box $\Lambda_x^\ell$ are at least at distance $3$, so that, if we divide $\Lambda_x^\ell$ into $\left\lfloor\frac{\ell}{3}\right\rfloor$ consecutive segments of size $3$, each of these has at most one particle, and thus
\begin{equation*}
    \mu_{\frac{1}{2}}(\mathscr{B}_\ell(x))\le \left(4\cdot\frac{1}{2^3}\right)^{\lfloor\frac{\ell}{3}\rfloor}.
\end{equation*}
But now, by Lemma \ref{lemma:partition}, we have that
\begin{equation*}
    \nu_N^\alpha(\mathscr{B}_\ell(x))=\frac{1}{Z_N^\alpha}\sum_{\eta\in\mathscr{B}_\ell(x)}e^{-N^{-\alpha} H(\eta)}\lesssim \frac{|\mathscr{B}_\ell(x)|}{2^N}=\mu_{\frac{1}{2}}(\mathscr{B}_\ell(x)),
\end{equation*}
which completes the proof.
\end{proof}

Corollary \ref{corol:bad_measure} easily implies that we can discard a set of configurations $\eta$ where the rates $c_{x, x+1}^N(\eta)$ are potentially degenerate, which is the content of the next result.

\begin{lemma}[Restriction of Configurations]\label{lemma:restriction}
Let $\varphi:\Omega_N\to\mathbb{R}$. There exists a constant $C$ such that, for any $t>0$, any positive integer $N$ and any positive integers $\ell_0, \ell \le N$,
\begin{equation}\label{eq:restriction_bound}
    \begin{split}
    \expected_{\nu_N^\alpha}&\left[\left(\int_0^t\sum_{x\in\mathbb{T}_N}G\left(\frac{x}{N}\right)\varphi(\tau_x\eta_{sN^2})\de s\right)^2\right] \le Ct^2 N^2 2^{-\lfloor\frac{\ell_0}{3}\rfloor}\|G\|_{2, N}^2\mysup_{\eta\in\Omega_N}\varphi(\eta)^2    \\&+2\expected_{\nu_N^\alpha}\left[\left(\int_0^t\sum_{x\in\mathbb{T}_N}G\left(\frac{x}{N}\right)\varphi(\tau_x\eta_{sN^2})\boldsymbol{1}_{\mathscr{G}_{\ell_0}(x+\ell)}(\eta_{sN^2})\de s\right)^2\right].
    \end{split}
\end{equation}
\end{lemma}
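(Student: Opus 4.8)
The plan is to split the time integral according to whether the configuration $\eta_{sN^2}$ is good or bad inside the box $\Lambda_{x+\ell}^{\ell_0}$, so that
\begin{equation*}
    \varphi(\tau_x\eta) = \varphi(\tau_x\eta)\boldsymbol{1}_{\mathscr{G}_{\ell_0}(x+\ell)}(\eta) + \varphi(\tau_x\eta)\boldsymbol{1}_{\mathscr{B}_{\ell_0}(x+\ell)}(\eta),
\end{equation*}
and then apply the elementary inequality $(a+b)^2 \le 2a^2 + 2b^2$. The second resulting term is exactly the one appearing on the right-hand side of \eqref{eq:restriction_bound}, so the whole task reduces to bounding the contribution of the bad set, namely
\begin{equation*}
    \expected_{\nu_N^\alpha}\left[\left(\int_0^t\sum_{x\in\mathbb{T}_N}G\left(\frac{x}{N}\right)\varphi(\tau_x\eta_{sN^2})\boldsymbol{1}_{\mathscr{B}_{\ell_0}(x+\ell)}(\eta_{sN^2})\de s\right)^2\right]
\end{equation*}
by the first term on the right-hand side of \eqref{eq:restriction_bound}.

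For this bad-set term I would first pull out the supremum of $\varphi$, bounding $|\varphi(\tau_x\eta)| \le \sup_{\eta}|\varphi(\eta)|$, and then use the Cauchy--Schwarz inequality in the time variable followed by one in the spatial sum. Concretely, writing $A := \sup_\eta \varphi(\eta)^2$, one has
\begin{equation*}
    \left(\int_0^t\sum_{x}G\!\left(\tfrac{x}{N}\right)\varphi(\tau_x\eta_{sN^2})\boldsymbol{1}_{\mathscr{B}_{\ell_0}(x+\ell)}\de s\right)^2 \le A\, t \int_0^t\left(\sum_{x}\left|G\!\left(\tfrac{x}{N}\right)\right|\boldsymbol{1}_{\mathscr{B}_{\ell_0}(x+\ell)}(\eta_{sN^2})\right)^2\de s,
\end{equation*}
and then Cauchy--Schwarz on the inner sum against the counting measure on $\mathbb{T}_N$ gives
\begin{equation*}
    \left(\sum_{x}\left|G\!\left(\tfrac{x}{N}\right)\right|\boldsymbol{1}_{\mathscr{B}_{\ell_0}(x+\ell)}\right)^2 \le \left(\sum_x G\!\left(\tfrac{x}{N}\right)^2\right)\left(\sum_x \boldsymbol{1}_{\mathscr{B}_{\ell_0}(x+\ell)}\right) \le N^2\|G\|_{2,N}^2\cdot\frac{1}{N}\sum_x\boldsymbol{1}_{\mathscr{B}_{\ell_0}(x+\ell)}.
\end{equation*}
Taking expectations, using stationarity of $\nu_N^\alpha$ (so $\expected_{\nu_N^\alpha}[\boldsymbol{1}_{\mathscr{B}_{\ell_0}(x+\ell)}(\eta_{sN^2})] = \nu_N^\alpha(\mathscr{B}_{\ell_0}(x+\ell))$, which by translation invariance does not depend on $x$), and applying Corollary \ref{corol:bad_measure}, the inner sum has expectation $\lesssim N\cdot 2^{-\lfloor \ell_0/3\rfloor}$. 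Collecting the factors $A$, $t$ (from pulling out of the integral), another $t$ (the outer integral length), $N^2\|G\|_{2,N}^2$, and $2^{-\lfloor\ell_0/3\rfloor}$ yields precisely the claimed bound $Ct^2N^2 2^{-\lfloor\ell_0/3\rfloor}\|G\|_{2,N}^2\sup_\eta\varphi(\eta)^2$.

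The argument is essentially routine bookkeeping once Corollary \ref{corol:bad_measure} is in hand; the only mild subtlety — and the place I would be most careful — is ensuring the combinatorial and normalisation factors of $N$ match exactly, i.e.\ that the double Cauchy--Schwarz produces $N^2\|G\|_{2,N}^2$ rather than, say, $N\|G\|_{2,N}^2$ or $N^3$. One should note in particular that $\frac{1}{N}\sum_x G(x/N)^2 = \|G\|_{2,N}^2$ by definition, so $\sum_x G(x/N)^2 = N\|G\|_{2,N}^2$, and the cardinality bound $\sum_x \boldsymbol{1}_{\mathscr{B}_{\ell_0}(x+\ell)} \le N$ contributes the other factor; after expectation the decay $2^{-\lfloor\ell_0/3\rfloor}$ appears once. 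No spectral gap, no equivalence of ensembles, and no correlation estimates are needed here — only stationarity and the exponential smallness of the bad set — so this lemma is comparatively soft.
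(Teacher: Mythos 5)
Your proposal is correct and follows essentially the same route as the paper: split $\varphi(\tau_x\eta)$ along $\boldsymbol{1}_{\mathscr{G}_{\ell_0}(x+\ell)}+\boldsymbol{1}_{\mathscr{B}_{\ell_0}(x+\ell)}$, apply $(a+b)^2\le 2a^2+2b^2$, then bound the bad-set term via Cauchy--Schwarz in time and in the spatial sum together with stationarity and Corollary \ref{corol:bad_measure}. The only cosmetic difference is the order of operations in the spatial Cauchy--Schwarz (you pair $|G|$ with the indicator after pulling out $\sup\varphi^2$, whereas the paper keeps $\varphi^2\boldsymbol{1}_{\mathscr{B}}$ inside the expectation), and your bookkeeping of the factors of $N$ and $t$ is accurate.
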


\begin{proof} Recall the definition of good and bad configurations given in \eqref{eq:good_configs} and \eqref{eq:bad_configs}: by the inequality $(x+y)^2\le 2x^2+2y^2$, for any integers $1\le \ell_0, \ell \le N$ we can bound the expectation in the statement from above by
\begin{align}
    &2\expected_{\nu_N^\alpha}\left[\left(\int_0^t\sum_{x\in\mathbb{T}_N}G\left(\frac{x}{N}\right)\varphi(\tau_x\eta_{sN^2})\boldsymbol{1}_{\mathscr{G}_{\ell_0}(x+\ell)}(\eta_{sN^2})\de s\right)^2\right]\nonumber
    \\&+2\expected_{\nu_N^\alpha}\left[\left(\int_0^t\sum_{x\in\mathbb{T}_N}G\left(\frac{x}{N}\right)\varphi(\tau_x\eta_{sN^2})\boldsymbol{1}_{\mathscr{B}_{\ell_0}(x+\ell)}(\eta_{sN^2})\de s\right)^2\right].\label{eq:bad_term}
\end{align}

Consider \eqref{eq:bad_term}: since the measure $\nu_N^\alpha$ is invariant for the dynamics, by the Cauchy-Schwarz inequality and a standard convexity inequality, we have that
\begin{align*}
    &\expected_{\nu_N^\alpha}\left[\left(\int_0^t\sum_{x\in\mathbb{T}_N}G\left(\frac{x}{N}\right)\varphi(\tau_x\eta_{sN^2})\boldsymbol{1}_{\mathscr{B}_{\ell_0}(x+\ell)}(\eta_{sN^2})\de s\right)^2\right]
    \\& \le t^2 E_{\nu_N^\alpha}\left[\left(\sum_{x\in\mathbb{T}_N}G\left(\frac{x}{N}\right)\varphi(\tau_x\eta)\boldsymbol{1}_{\mathscr{B}_{\ell_0}(x+\ell)}(\eta)\de s\right)^2\right]
    \\&\le t^2N \sum_{x\in\mathbb{T}_N} G\left(\frac{x}{N}\right)^2 E_{\nu_N^\alpha}\left[\varphi(\tau_x\eta)^2\boldsymbol{1}_{\mathscr{B}_{\ell_0}(x+\ell)}(\eta)\right]
    \\&\le t^2N^2\|G\|^2_{2, N}\mysup_{\eta\in\Omega_N}\varphi(\eta)^2\mysup_{x\in\mathbb{T}_N}\nu_N^\alpha(\mathscr{B}_{\ell_0}(x+\ell)).
\end{align*}
By Corollary \ref{corol:bad_measure}, we are done.
\end{proof}

Finally, the next result is an adaptation of \cite[Lemma 5.7]{bgs17} and demonstrates how the presence of a ``good box'' lets us in fact get around the potential degeneracy of the rates, and more specifically, allows us to repeat all the arguments in Section \ref{sec:proof_gen_bg} as long as we restrict to the presence of such a box. 

\begin{lemma}[Path Lemma]\label{lemma:path}
Given $y, z\in\mathbb{T}_N$, define
\begin{equation*}
    \sigma_{y, z}=\sigma_{y, z}(\eta):=\frac{\nu_N^\alpha(\eta^{y, z})}{\nu_N^\alpha(\eta)}.
\end{equation*}
For any $f\in L^2(\nu_N^\alpha)$, any positive integers $N$ and $\ell_0, \ell\le \frac{N}{2}$, any $y, z\in\{1, \ldots, \ell\}$ and {any mean-zero local function $\varphi:\Omega_N\to\mathbb{R}$ such that $\varphi(\eta^{y, z})=\varphi(\eta)$} for each $\eta\in\Omega_N$, we have that
\begin{align*}
    &\left|2\int_{\Omega_N} \sum_{x\in\mathbb{T}_N} G\left(\frac{x}{N}\right)\varphi(\tau_x\eta)[\eta_{x+y}-\eta_{x+z}]\boldsymbol{1}_{\mathscr{G}_{\ell_0}(x+\ell)}(\eta)f(\eta) \left[\frac{1+\sigma_{x+y, x+z}}{2}\right]\de \nu_N^\alpha\right|
    \\& \le C\frac{(\ell+\ell_0)^2}{N}\Var_{\nu_N^\alpha}(\varphi)\|G\|_{2, N}^2+N^2\mathscr{D}_N(f, \nu_N^\alpha).
\end{align*}
The same estimate holds by substituting $\boldsymbol{1}_{\mathscr{G}_{\ell_0}(x+\ell)}$ with $\boldsymbol{1}_{\mathscr{G}_{\ell_0}(x-\ell)}$, {as long as $y$ and $z$ are instead in the set $\{-\ell, \ldots, -1\}$.}
\end{lemma}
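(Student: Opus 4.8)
The plan is to adapt the argument of \cite[Lemma 5.7]{bgs17}: we want to move the occupation variables $\eta_{x+y}$ and $\eta_{x+z}$ past one another through a chain of nearest-neighbour swaps, and the presence of a mobile cluster inside the good box $\mathscr{G}_{\ell_0}(x+\ell)$ will allow every intermediate swap to be performed with non-degenerate rate. First I would write the difference $\eta_{x+y}-\eta_{x+z}$ as a telescoping sum $\sum_{j} (\eta_{w_j}-\eta_{w_{j+1}})$ along a path $w_0 = x+y, w_1, \dots, w_n = x+z$ of adjacent sites; because $y,z\in\{1,\dots,\ell\}$ and the good box lies in $\{x+\ell+1,\dots,x+\ell+\ell_0\}$, the path has length $n \lesssim \ell+\ell_0$, and it can be routed so as to enter the good box, pick up the mobile cluster (a pair of particles at distance at most two which can be transported along the box), carry the ``hole'' or ``particle'' across, and return — exactly as in the porous-media construction of \cite{bgs17}. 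Each single-step term $(\eta_{w_j}-\eta_{w_{j+1}})$ is then rewritten, using the non-degeneracy of the rate for a swap adjacent to the mobile cluster, as $q^N_{w_j,w_{j+1}}(\eta)[f(\eta)-f(\eta^{w_j,w_{j+1}})]$ times a bounded factor, plus the symmetric term $[f(\eta)+f(\eta^{w_j,w_{j+1}})]$ which one controls by a change of variables.

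Second, I would split the integral as in the proofs of Lemmas \ref{lemma:one_block} and \ref{lemma:two_block}: for each step along the path, write $f(\eta) = \tfrac12[f(\eta)-f(\eta^{w_j,w_{j+1}})] + \tfrac12[f(\eta)+f(\eta^{w_j,w_{j+1}})]$. On the antisymmetric piece apply Young's inequality $ab \le \tfrac{A}{2}a^2 + \tfrac{1}{2A}b^2$ with a weight $A=A_x$ chosen (as in Lemma \ref{lemma:one_block}, roughly $A_x = N^2/(C(\ell+\ell_0)\,|G(x/N)|)$) so that, after summing over $x$ and over the $\lesssim \ell+\ell_0$ steps, the Dirichlet-form contribution $\sum_x\sum_j \tfrac{A_x}{2}|G(x/N)|\int q^N_{w_j,w_{j+1}}[f(\eta)-f(\eta^{w_j,w_{j+1}})]^2\,d\nu_N^\alpha$ is bounded by $N^2\mathscr{D}_N(f,\nu_N^\alpha)$ via \textbf{\textsc{Assumption 3}} (which \emph{does} hold on the restricted configuration set, since the rates are uniformly elliptic there). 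The remaining $\tfrac{1}{2A_x}$-term produces $\tfrac{C(\ell+\ell_0)^2}{N^2}\sum_x |G(x/N)|^2 \sum_j \int \varphi(\tau_x\eta)^2 \tfrac{[\eta_{w_j}-\eta_{w_{j+1}}]^2}{q^N_{w_j,w_{j+1}}(\eta)}\,d\nu_N^\alpha \lesssim \tfrac{(\ell+\ell_0)^2}{N}\,\mathrm{Var}_{\nu_N^\alpha}(\varphi)\,\|G\|_{2,N}^2$, using $|G(x/N)|^2 \le C N \|G\|_{2,N}^2 \cdot \tfrac1N$ after the sum, the uniform lower bound $q^N \ge \delta$, and that $\varphi$ is mean-zero so $E_{\nu_N^\alpha}[\varphi(\tau_x\eta)^2] = \mathrm{Var}_{\nu_N^\alpha}(\varphi)$ by translation invariance. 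This is the source of the claimed $C\tfrac{(\ell+\ell_0)^2}{N}\mathrm{Var}_{\nu_N^\alpha}(\varphi)\|G\|_{2,N}^2$.

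Third, the symmetric piece $[f(\eta)+f(\eta^{w_j,w_{j+1}})]$ must be shown not to produce an uncontrolled term. Here I would perform the change of variables $\eta \mapsto \eta^{w_j,w_{j+1}}$ in the part carrying $f(\eta^{w_j,w_{j+1}})$; using that $\varphi$ is invariant under the swap $\eta\mapsto\eta^{y,z}$ (hypothesis of the lemma) together with the fact that the path is constructed so that $\varphi(\tau_x\cdot)$, the indicator $\boldsymbol 1_{\mathscr G_{\ell_0}(x+\ell)}$, and $G$ are unaffected by the relevant local swaps, the symmetric contributions telescope and cancel against one another along the path — exactly the cancellation used after \eqref{eq:secondAdd_1block}–\eqref{eq:S_1block} in Lemma \ref{lemma:one_block}, with the Radon–Nikodym factor $\sigma_{w_j,w_{j+1}}$ absorbed into the definition of the antisymmetrised quantity. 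The main obstacle, and the place where the good-box hypothesis is essential, is \emph{verifying that the path can always be closed up with every intermediate swap having a rate bounded below by $\delta>0$}: when $\eta\in\mathscr{G}_{\ell_0}(x+\ell)$ one must exhibit, configuration by configuration, an explicit admissible sequence of moves transporting the discrepancy $\eta_{x+y}-\eta_{x+z}$ to the cluster and back, and check that the length of this sequence is $O(\ell+\ell_0)$ uniformly in $\eta$ and in the location of the cluster; this combinatorial bookkeeping (already carried out for the porous-media model) is what controls the constant $C$ and is the delicate part of the argument. The case with $\boldsymbol 1_{\mathscr G_{\ell_0}(x-\ell)}$ and $y,z\in\{-\ell,\dots,-1\}$ is identical by left–right symmetry.
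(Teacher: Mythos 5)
Your overall strategy is the right one (path construction in the spirit of \cite[Lemma 5.7]{bgs17}, Young's inequality with a weight $A_x\sim N^2/((\ell+\ell_0)|G(x/N)|)$, cancellation of the symmetric part against the $\sigma$-factor), and your bookkeeping of the two resulting terms matches the claimed bound. However, there is a genuine gap in the way you set up the decomposition. You telescope the \emph{observable} $\eta_{x+y}-\eta_{x+z}=\sum_j(\eta_{w_j}-\eta_{w_{j+1}})$ over intermediate sites and then, for each bond $\{w_j,w_{j+1}\}$, perform the single swap $\eta\mapsto\eta^{w_j,w_{j+1}}$ on the \emph{original} configuration, dividing by $q^N_{w_j,w_{j+1}}(\eta)$ and asserting that ``\textbf{\textsc{Assumption 3}} does hold on the restricted configuration set, since the rates are uniformly elliptic there.'' This is false: $\eta\in\mathscr{G}_{\ell_0}(x+\ell)$ only guarantees a mobile cluster somewhere in the window $\{x+\ell+1,\dots,x+\ell+\ell_0\}$, while your bonds $\{w_j,w_{j+1}\}$ lie in $\{x+1,\dots,x+\ell\}$; the rate $c^N_{w_j,w_{j+1}}(\eta)$ vanishes whenever $\eta_{w_j-1}=\eta_{w_j+2}=0$, good box or not, so the denominators in your Young step are not bounded below. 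The correct mechanism keeps the bounded factor $[\eta_{x+y}-\eta_{x+z}]$ whole and instead telescopes $f(\eta)-f(\eta^{x+y,x+z})=\sum_i[f(\eta^{(i-1)})-f(\eta^{(i)})]$ along an explicit chain of \emph{configurations} $\eta^{(i+1)}=(\eta^{(i)})^{x^{(i)},x^{(i)}+1}$ that first transports the mobile cluster to where it is needed; each swap is then applied to the current configuration $\eta^{(i-1)}$, for which the rate is positive \emph{by construction of the path}, not by any ellipticity on $\mathscr{G}_{\ell_0}(x+\ell)$. To convert the resulting sum back into the Dirichlet form one also needs two ingredients you omit: the uniform bound $\nu_N^\alpha(\eta)/\nu_N^\alpha(\xi)\lesssim 1$ to change the configuration of integration back to $\eta$, and the fact that each bond is visited at most a bounded number of times (six) along the path, so that summing over $x$ and over the steps costs only a factor $\ell+\ell_0$ and not $(\ell+\ell_0)^2$.

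A second, smaller issue is your treatment of the symmetric part. You propose a per-step cancellation of $[f(\eta)+f(\eta^{w_j,w_{j+1}})]$ at every bond of the path, which would require $\varphi(\tau_x\cdot)$, the indicator $\boldsymbol{1}_{\mathscr{G}_{\ell_0}(x+\ell)}$ and the Radon--Nikodym factor to be invariant under each intermediate swap; none of this is guaranteed by the hypotheses (the lemma only assumes $\varphi(\eta^{y,z})=\varphi(\eta)$, and the path enters the good box, where swaps change the indicator). The cancellation should be done once, globally, for the single swap $\eta\mapsto\eta^{x+y,x+z}$: splitting $f(\eta)[1+\sigma_{x+y,x+z}]$ into $[f(\eta)-f(\eta^{x+y,x+z})]$, $[f(\eta)+f(\eta^{x+y,x+z})]$ and $f(\eta)[\sigma_{x+y,x+z}-1]$, the change of variables $\eta\mapsto\eta^{x+y,x+z}$ in the second piece (using exactly the hypothesis on $\varphi$, the antisymmetry of $\eta_{x+y}-\eta_{x+z}$, and the fact that $x+y,x+z$ lie outside the good box) makes the last two pieces cancel, and only then does the path construction enter, applied to the remaining antisymmetric piece.
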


\begin{remark} The Dirichlet form $\mathscr{D}_N$ was defined in \eqref{eq:def_dirichlet}. Using the stationarity of $\nu_N^\alpha$, it is not hard to see that
\begin{equation}\label{eq:dirichlet_KLS}
    \mathscr{D}_N(f, \nu_N^\alpha)=\frac{1}{2}\int_{\Omega_N}\sum_{x\in\mathbb{T}_N}c_{x, x+1}^N(\eta)[f(\eta^{x, x+1})-f(\eta)]\de\nu_N^\alpha.
\end{equation}
\end{remark}

\begin{proof}[Proof of Lemma \ref{lemma:path}] We start by noting that the integral in the statement can be rewritten as
\begin{align}
    &\int_{\Omega_N} \sum_{x\in\mathbb{T}_N} G\left(\frac{x}{N}\right)\varphi(\tau_x\eta)[\eta_{x+y}-\eta_{x+z}]\boldsymbol{1}_{\mathscr{G}_{\ell_0}(x+\ell)}(\eta)[f(\eta)-f(\eta^{x+y, x+z})] \de \nu_N^\alpha\nonumber
    \\&+\int_{\Omega_N} \sum_{x\in\mathbb{T}_N} G\left(\frac{x}{N}\right)\varphi(\tau_x\eta)[\eta_{x+y}-\eta_{x+z}]\boldsymbol{1}_{\mathscr{G}_{\ell_0}(x+\ell)}(\eta)[f(\eta)+f(\eta^{x+y, x+z})]\de \nu_N^\alpha\label{eq:plus_term}
    \\&+\int_{\Omega_N} \sum_{x\in\mathbb{T}_N} G\left(\frac{x}{N}\right)\varphi(\tau_x\eta)[\eta_{x+y}-\eta_{x+z}]\boldsymbol{1}_{\mathscr{G}_{\ell_0}(x+\ell)}(\eta)f(\eta) \left[\sigma_{x+y, x+z}-1\right]\de \nu_N^\alpha\label{eq:sigma_term},
\end{align}
and by making the change of variable $\eta^{x+y, x+z}\mapsto \eta$ in the second summand of \eqref{eq:plus_term}, recalling the assumption $\varphi((\tau_x\eta)^{x+y, x+z})=\varphi(\tau_x\eta)$, it is easy to see that $\eqref{eq:plus_term}+\eqref{eq:sigma_term}=0$. Hence, the expression in the statement is bounded from above by
\begin{equation*}
    \sum_{x\in\mathbb{T}_N} \left|G\left(\frac{x}{N}\right)\right| \left|\int_{\Omega_N}\varphi(\tau_x\eta)[\eta_{x+y}-\eta_{x+z}]\boldsymbol{1}_{\mathscr{G}_{\ell_0}(x+\ell)}(\eta)[f(\eta)-f(\eta^{x+y, x+z})] \de \nu_N^\alpha\right|.
\end{equation*}

Now given $\eta\in\mathscr{G}_{\ell_0}(x+\ell)$, let $X(\eta):=\mymin\{x'\in\Lambda_{x+\ell}^{\ell_0}:\eta_{x'}\eta_{x'+1}+\eta_{x'}\eta_{x'+2}>0\}$. For all configurations $\eta$ such that $X(\eta)=x'$, we can construct a sequence $\{x^{(i)}\}_{i=0, \ldots, N(x')}$ taking values in $\{x+1, \ldots, x+\ell+\ell_0\}$ such that
\begin{equation*}
    \begin{cases}
    \eta^{(0)}=\eta,
    \\\eta^{(i+1)}=(\eta^{(i)})^{x^{(i)}, x^{(i)}+1}, & i=0, \ldots, N(x')-1
    \\ \eta^{(N(x'))}=\eta^{x+y, x+z},
    \end{cases}
\end{equation*}
and such that $c_{x^{(i)}, x^{(i)}+1}(\eta^{(i)})$ is strictly positive for all $i=0, \ldots, N(x')$; an example of this construction is shown in Figure \ref{fig:path}.

\begin{figure}[!ht]
\begin{center}
\begin{tikzpicture}[thick, scale=1][h!]    
    \foreach \y in {0,-1,...,-7}
    {\draw[step=1cm,lightgray,very thin] (-5,\y) grid (5,\y);
    \draw[step=1cm,gray,very thick] (-1,\y) grid (5,\y);}
    \foreach \y in {-9}
    {\draw[step=1cm,lightgray,very thin] (-5,\y) grid (5,\y);
    \draw[step=1cm,gray,very thick] (-1,\y) grid (5,\y);}
    
    \foreach \y in {0,-1,...,-7}
    {\foreach \x in {-5,...,5}{
    \draw[color=lightgray,very thin] (\x,\y-0.2)--(\x,\y+0.2);} 
    \draw[color=gray, very thick] (-1,\y-0.3)--(-1,\y+0.3);
    \draw[color=gray, very thick] (5,\y-0.3)--(5,\y+0.3);}
    \draw (0, -8) node [color=black] {$\vdots$};
    \foreach \y in {-9}
    {\foreach \x in {-5,...,5}{
    \draw[color=lightgray,very thin] (\x,\y-0.2)--(\x,\y+0.2);} 
    \draw[color=gray, very thick] (-1,\y-0.3)--(-1,\y+0.3);
    \draw[color=gray, very thick] (5,\y-0.3)--(5,\y+0.3);}
    
    \draw (-5,0.6) node [color=black] {$\scriptstyle{x+1}$};
    \draw (-4,0.6) node [color=black] {$\scriptstyle{x+y}$};
    \draw (-2,0.6) node [color=black] {$\scriptstyle{x+z}$};
    \draw (-1,0.6) node [color=black] {$\scriptstyle{x+\ell+1}$};
    \draw (5,0.6) node [color=black] {$\scriptstyle{x+\ell+\ell_0}$};
    \draw (3,0.6) node [color=black] {$\scriptstyle{X(\eta)}$};  
     
    \draw (5.7,0) node [color=black] {$\eta$};
    \node[shape=circle, minimum size=0.4cm] (-5) at (-5,0) {};
    \node[ball color=black!30!, shape=circle, minimum size=0.4cm] (-4) at (-4,0) {};
    \node[shape=circle, minimum size=0.4cm] (-3) at (-3,0) {};
    \node[shape=circle, minimum size=0.4cm] (-2) at (-2,0) {};
    \node[shape=circle, minimum size=0.4cm] (-1) at (-1,0) {};
    \node[ball color=black!30!, shape=circle, minimum size=0.4cm] (0) at (0,0) {};
    \node[shape=circle, minimum size=0.4cm] (1) at (1,0) {};
    \node[shape=circle, minimum size=0.4cm] (2) at (2,0) {};
    \node[ball color=black!30!, shape=circle, minimum size=0.4cm] (3) at (3,0) {};
    \node[ball color=black!30!, shape=circle, minimum size=0.4cm] (4) at (4,0) {};
    \node[shape=circle, minimum size=0.4cm] (5) at (5,0) {};
    \path [<-] (2) edge[thin, bend left=65] node[above] {} (3);

    \node[shape=circle, minimum size=0.4cm] (-5) at (-5,-1) {};
    \node[ball color=black!30!, shape=circle, minimum size=0.4cm] (-4) at (-4,-1) {};
    \node[shape=circle, minimum size=0.4cm] (-3) at (-3,-1) {};
    \node[shape=circle, minimum size=0.4cm] (-2) at (-2,-1) {};
    \node[shape=circle, minimum size=0.4cm] (-1) at (-1,-1) {};
    \node[ball color=black!30!, shape=circle, minimum size=0.4cm] (0) at (0,-1) {};
    \node[shape=circle, minimum size=0.4cm] (1) at (1,-1) {};
    \node[ball color=black!30!, shape=circle, minimum size=0.4cm] (2) at (2,-1) {};
    \node[shape=circle, minimum size=0.4cm] (3) at (3,-1) {};
    \node[ball color=black!30!, shape=circle, minimum size=0.4cm] (4) at (4,-1) {};
    \node[shape=circle, minimum size=0.4cm] (5) at (5,-1) {};
    \path [<-] (1) edge[thin, bend left=65] node[above] {} (2);

    \node[shape=circle, minimum size=0.4cm] (-5) at (-5,-2) {};
    \node[ball color=black!30!, shape=circle, minimum size=0.4cm] (-4) at (-4,-2) {};
    \node[shape=circle, minimum size=0.4cm] (-3) at (-3,-2) {};
    \node[shape=circle, minimum size=0.4cm] (-2) at (-2,-2) {};
    \node[shape=circle, minimum size=0.4cm] (-1) at (-1,-2) {};
    \node[ball color=black!30!, shape=circle, minimum size=0.4cm] (0) at (0,-2) {};
    \node[ball color=black!30!, shape=circle, minimum size=0.4cm] (1) at (1,-2) {};
    \node[shape=circle, minimum size=0.4cm] (2) at (2,-2) {};
    \node[shape=circle, minimum size=0.4cm] (3) at (3,-2) {};
    \node[ball color=black!30!, shape=circle, minimum size=0.4cm] (4) at (4,-2) {};
    \node[shape=circle, minimum size=0.4cm] (5) at (5,-2) {};
    \path [<-] (-1) edge[thin, bend left=65] node[above] {} (0);

    \node[shape=circle, minimum size=0.4cm] (-5) at (-5,-3) {};
    \node[ball color=black!30!, shape=circle, minimum size=0.4cm] (-4) at (-4,-3) {};
    \node[shape=circle, minimum size=0.4cm] (-3) at (-3,-3) {};
    \node[shape=circle, minimum size=0.4cm] (-2) at (-2,-3) {};
    \node[ball color=black!30!, shape=circle, minimum size=0.4cm] (-1) at (-1,-3) {};
    \node[shape=circle, minimum size=0.4cm] (0) at (0,-3) {};
    \node[ball color=black!30!, shape=circle, minimum size=0.4cm] (1) at (1,-3) {};
    \node[shape=circle, minimum size=0.4cm] (2) at (2,-3) {};
    \node[shape=circle, minimum size=0.4cm] (3) at (3,-3) {};
    \node[ball color=black!30!, shape=circle, minimum size=0.4cm] (4) at (4,-3) {};
    \node[shape=circle, minimum size=0.4cm] (5) at (5,-3) {};
    \path [<-] (0) edge[thin, bend left=65] node[above] {} (1);

    \node[shape=circle, minimum size=0.4cm] (-5) at (-5,-4) {};
    \node[ball color=black!30!, shape=circle, minimum size=0.4cm] (-4) at (-4,-4) {};
    \node[shape=circle, minimum size=0.4cm] (-3) at (-3,-4) {};
    \node[shape=circle, minimum size=0.4cm] (-2) at (-2,-4) {};
    \node[ball color=black!30!, shape=circle, minimum size=0.4cm] (-1) at (-1,-4) {};
    \node[ball color=black!30!, shape=circle, minimum size=0.4cm] (0) at (0,-4) {};
    \node[shape=circle, minimum size=0.4cm] (1) at (1,-4) {};
    \node[shape=circle, minimum size=0.4cm] (2) at (2,-4) {};
    \node[shape=circle, minimum size=0.4cm] (3) at (3,-4) {};
    \node[ball color=black!30!, shape=circle, minimum size=0.4cm] (4) at (4,-4) {};
    \node[shape=circle, minimum size=0.4cm] (5) at (5,-4) {};
    \path [<-] (-2) edge[thin, bend left=65] node[above] {} (-1);

    \node[shape=circle, minimum size=0.4cm] (-5) at (-5,-5) {};
    \node[ball color=black!30!, shape=circle, minimum size=0.4cm] (-4) at (-4,-5) {};
    \node[shape=circle, minimum size=0.4cm] (-3) at (-3,-5) {};
    \node[ball color=black!30!, shape=circle, minimum size=0.4cm] (-2) at (-2,-5) {};
    \node[shape=circle, minimum size=0.4cm] (-1) at (-1,-5) {};
    \node[ball color=black!30!, shape=circle, minimum size=0.4cm] (0) at (0,-5) {};
    \node[shape=circle, minimum size=0.4cm] (1) at (1,-5) {};
    \node[shape=circle, minimum size=0.4cm] (2) at (2,-5) {};
    \node[shape=circle, minimum size=0.4cm] (3) at (3,-5) {};
    \node[ball color=black!30!, shape=circle, minimum size=0.4cm] (4) at (4,-5) {};
    \node[shape=circle, minimum size=0.4cm] (5) at (5,-5) {};
    \path [->] (-4) edge[thin, bend left=65] node[above] {} (-3);

    \node[shape=circle, minimum size=0.4cm] (-5) at (-5,-6) {};
    \node[shape=circle, minimum size=0.4cm] (-4) at (-4,-6) {};
    \node[ball color=black!30!, shape=circle, minimum size=0.4cm] (-3) at (-3,-6) {};
    \node[ball color=black!30!, shape=circle, minimum size=0.4cm] (-2) at (-2,-6) {};
    \node[shape=circle, minimum size=0.4cm] (-1) at (-1,-6) {};
    \node[ball color=black!30!, shape=circle, minimum size=0.4cm] (0) at (0,-6) {};
    \node[shape=circle, minimum size=0.4cm] (1) at (1,-6) {};
    \node[shape=circle, minimum size=0.4cm] (2) at (2,-6) {};
    \node[shape=circle, minimum size=0.4cm] (3) at (3,-6) {};
    \node[ball color=black!30!, shape=circle, minimum size=0.4cm] (4) at (4,-6) {};
    \node[shape=circle, minimum size=0.4cm] (5) at (5,-6) {};
    \path [->] (-2) edge[thin, bend left=65] node[above] {} (-1);

    \node[shape=circle, minimum size=0.4cm] (-5) at (-5,-7) {};
    \node[shape=circle, minimum size=0.4cm] (-4) at (-4,-7) {};
    \node[ball color=black!30!, shape=circle, minimum size=0.4cm] (-3) at (-3,-7) {};
    \node[shape=circle, minimum size=0.4cm] (-2) at (-2,-7) {};
    \node[ball color=black!30!, shape=circle, minimum size=0.4cm] (-1) at (-1,-7) {};
    \node[ball color=black!30!, shape=circle, minimum size=0.4cm] (0) at (0,-7) {};
    \node[shape=circle, minimum size=0.4cm] (1) at (1,-7) {};
    \node[shape=circle, minimum size=0.4cm] (2) at (2,-7) {};
    \node[shape=circle, minimum size=0.4cm] (3) at (3,-7) {};
    \node[ball color=black!30!, shape=circle, minimum size=0.4cm] (4) at (4,-7) {};
    \node[shape=circle, minimum size=0.4cm] (5) at (5,-7) {};
    \path [->] (-3) edge[thin, bend left=65] node[above] {} (-2);

    \draw (6.2,-9) node [color=black] {$\eta^{x+y, x+z}$};
    \node[shape=circle, minimum size=0.4cm] (-5) at (-5,-9) {};
    \node[shape=circle, minimum size=0.4cm] (-4) at (-4,-9) {};
    \node[shape=circle, minimum size=0.4cm] (-3) at (-3,-9) {};
    \node[ball color=black!30!, shape=circle, minimum size=0.4cm] (-2) at (-2,-9) {};
    \node[shape=circle, minimum size=0.4cm] (-1) at (-1,-9) {};
    \node[ball color=black!30!, shape=circle, minimum size=0.4cm] (0) at (0,-9) {};
    \node[shape=circle, minimum size=0.4cm] (1) at (1,-9) {};
    \node[shape=circle, minimum size=0.4cm] (2) at (2,-9) {};
    \node[ball color=black!30!, shape=circle, minimum size=0.4cm] (3) at (3,-9) {};
    \node[ball color=black!30!, shape=circle, minimum size=0.4cm] (4) at (4,-9) {};
    \node[shape=circle, minimum size=0.4cm] (5) at (5,-9) {};
    
\end{tikzpicture}
\caption{Example of a path from a configuration $\eta$ in $\mathscr{G}_{\ell_0}(x+\ell)$ to the configuration $\eta^{x+y, x+z}$ that swaps the occupation state of sites $x+y$ and $x+z$.}\label{fig:path}
\end{center}
\end{figure}
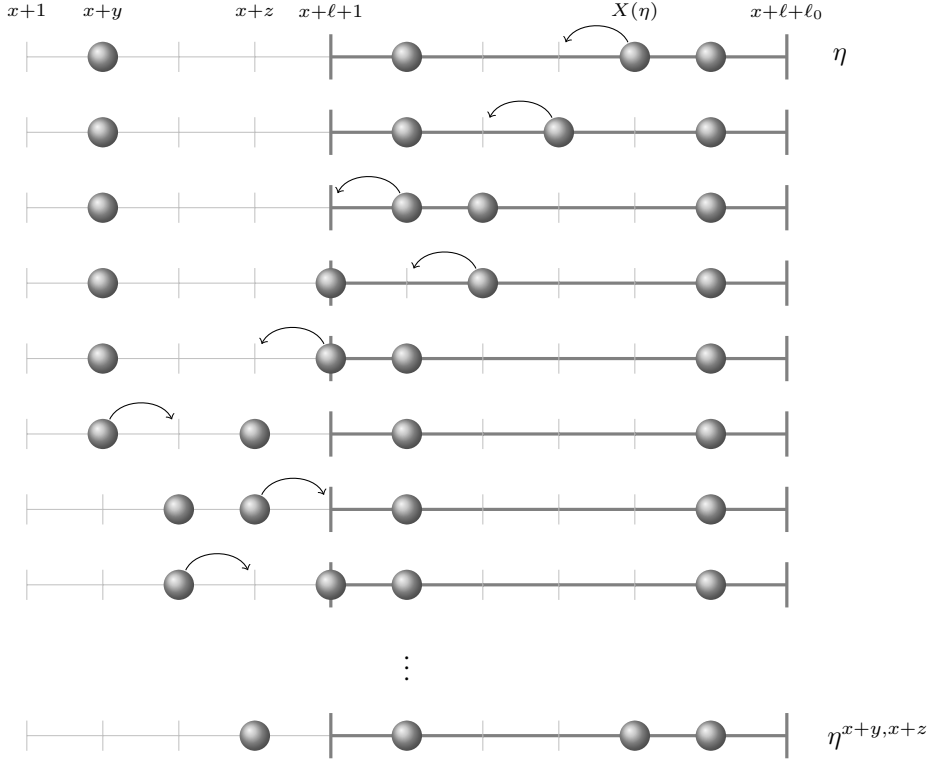

Moreover, this sequence can be chosen in such a way that the number of its elements satisfies $N(x')\le C(\ell+\ell_0)$, and each value in $\{x+1, \ldots, x+\ell+\ell_0\}$ is taken by $x^{(i)}$ for at most six $i$'s; see \cite{bgs17} for details. Hence, we can write
\begin{align*}
    &\int_{\Omega_N}\varphi(\tau_x\eta)[\eta_{x+y}-\eta_{x+z}]\boldsymbol{1}_{\mathscr{G}_{\ell_0}(x+\ell)}(\eta)[f(\eta)-f(\eta^{x+y, x+z})] \de \nu_N^\alpha
    \\&=\sum_{x'\in\Lambda_{x+\ell}^{\ell_0}}\int_{\Omega_N}\varphi(\tau_x\eta)[\eta_{x+y}-\eta_{x+z}]\boldsymbol{1}_{\{X(\eta)=x'\}}\sum_{i=1}^{N(x')}\left[f\big(\eta^{(i-1)}\big)-f\big(\eta^{(i)}\big)\right] \de \nu_N^\alpha.
\end{align*}
Now, by Young's inequality, for any $A_x>0$ the expression above is bounded from above by 
\begin{align*}
    &\frac{A_x}{2}\sum_{x'\in\Lambda_{x+\ell}^{\ell_0}}\int_{\Omega_N}\sum_{i=1}^{N(x')}\varphi(\tau_x\eta)^2\frac{[\eta_{x+y}-\eta_{x+z}]^2}{c_{x^{(i-1)}, x^{(i-1)}+1}^N(\eta^{(i-1)})}\boldsymbol{1}_{\{X(\eta)=x'\}}\de \nu_N^\alpha
    \\&+\frac{1}{2A_x}\sum_{x'\in\Lambda_{x+\ell}^{\ell_0}}\int_{\Omega_N}\sum_{i=1}^{N(x')}c_{x^{(i-1)}, x^{(i-1)}+1}^N(\eta^{(i-1)})\boldsymbol{1}_{\{X(\eta)=x'\}}\hspace{-.3em}\left[f\big(\eta^{(i-1)}\big)-f\big(\eta^{(i)}\big)\right]^2 \de \nu_N^\alpha,
\end{align*}
where, for $N$ sufficiently large, $c_{x^{(i-1)}, x^{(i-1)}+1}^N(\eta^{(i-1)})$ is bounded from below by a positive constant for each $\eta$ and $i$. Hence, the first line can be bounded from above by
\begin{equation*}
    C A_x(\ell+\ell_0)\Var_{\nu_N^\alpha}(\varphi).
\end{equation*}
As for the second line, since any given value is taken by the $x^{(i)}$'s at most six times and since, for $\alpha>1$, $\frac{\nu_N^\alpha(\eta)}{\nu_N^\alpha(\xi)}\lesssim 1$ for any $\eta, \xi\in\Omega_N$, it can be bounded from above by
\begin{equation*}
    \frac{C}{A_x}\int_{\Omega_N}\sum_{k=x+1}^{x+\ell+\ell_0-1}c_{k, k+1}^N(\eta)\left[f\left(\eta^{k, k+1}\right)-f(\eta)\right]^2\de \nu_N^\alpha.
\end{equation*}
By \eqref{eq:dirichlet_KLS}, choosing $A_x=\left|G\left(\frac{x}{N}\right)\right|\frac{\ell+\ell_0}{2CN^2}$ when $G\left(\frac{x}{N}\right)\ne0$ completes the proof.
\end{proof}

\subsection{Characterisation of the Limit Point}
We are finally ready to study the quadratic term \eqref{eq:quadratic_term} and thus conclude the proof of Theorem \ref{thm:fluctuations}.

\begin{proof}[Proof of Theorem \ref{thm:fluctuations}]
By Lemma \ref{lemma:expectation}, for each $x\in\mathbb{T}_N$ we have that
\begin{align*}
    &\left[\frac{\nu_N^\alpha(\eta^{x, x+1})}{\nu_N^\alpha(\eta)}-1\right](\bar\eta_x-\bar\eta_{x+1})
    \\&=\left[e^{-N^{-\alpha}(\bar\eta_x-\bar\eta_{x+1})(\bar\eta_{x+2}-\bar\eta_{x-1})}-1\right](\bar\eta_x-\bar\eta_{x+1})
    \\&=-N^{-\alpha}(\bar\eta_x-\bar\eta_{x+1})^2(\bar\eta_{x+2}-\bar\eta_{x-1})+O(N^{-2\alpha})
    \\&=\frac{1}{4}N^{-\alpha}(\bar\eta_{x-1}-\bar\eta_{x+2})+2N^{-\alpha}(\bar\eta_{x}\bar\eta_{x+1}\bar\eta_{x+2}-\bar\eta_{x-1}\bar\eta_{x}\bar\eta_{x+1})+O(N^{-2\alpha}),
\end{align*}
and thus the stationary measure $\nu_N^\alpha$ satisfies \textbf{\textsc{Assumption 1a}}. Moreover, by Lemma \ref{lemma:correlations}, it also satisfies \textbf{\textsc{Assumption 2}} with $\beta=\alpha$. 

Now, note that if $\varphi$ is uniformly bounded, then the first term on the RHS of \eqref{eq:restriction_bound} vanishes \textit{exponentially} in $N$ for any choice of $\ell_0$ which is polynomial in $N$. But then, by Lemmas \ref{lemma:restriction} and \ref{lemma:path}, it is not hard to check that, choosing for instance $\ell_0=N^{\frac{1}{3}}$, up to a term vanishing exponentially in $N$, we can apply the bound \eqref{eq:bg_boundA} from Theorem \ref{thm:gen_bg}, and in particular choosing $L=\eps N$ we get
\begin{align*}
    &\expected_{\nu_N^\alpha}\left[\left(\int_0^t\sum_{x\in \Lambda_N}\nabla_N\phi\Bigl(\frac{x-\boldsymbol{v}_N s}{N}\Bigr)\left\{\bar\eta_{sN^2}(x)\bar\eta_{sN^2}(x+j)-\cev{\eta}_{sN^2}^{\eps N}(x)\vec{\eta}_{sN^2}^{\eps N}(x)\right\}\de s\right)^2\right]
    \\&\lesssim \eps t\|\nabla_N\phi\|_{2, N}^2 \bigg\{1+\frac{1}{N^{\alpha-1}}+\frac{t}{N^{2\alpha-2}}+\frac{t}{N^{3\alpha-4}}+\frac{t}{N^{4\alpha-3}}\bigg\}
\end{align*}
for $j\in\{1, 2\}$, which vanishes as $N\to\infty$ and $\eps\to0$ for any $\alpha\ge\frac{4}{3}$. Hence, up to a term vanishing in $L^2(\prob_{\nu_N^\alpha})$ as $N\to\infty$ and $\eps\to0$, we can rewrite the quadratic term \ref{eq:quadratic_term} as
\begin{equation}\label{eq:quadratic_term_2}
    (2b-6b\bar\rho_N)N^{\frac{1}{2}-\gamma}\int_0^t\sum_{x\in\mathbb{T}_N}\cev{\eta}_{sN^2}^{\eps N}(x)\vec{\eta}_{sN^2}^{\eps N}(x)\nabla_N\phi\Bigl(\frac{x-\boldsymbol{v}_N s}{N}\Bigr)\de s.
\end{equation}
Now:
\begin{enumerate}[i)]
    \item if $\gamma>\frac{1}{2}$, this term again vanishes as $N\to\infty$: indeed, by the Cauchy-Schwarz inequality and Lemma \ref{lemma:correlations}, it is not hard to check that
    \begin{align}
        &\expected_{\nu_N^\alpha}\left[\left((2b-6b\bar\rho_N)N^{\frac{1}{2}-\gamma}\int_0^t\sum_{x\in\mathbb{T}_N}\cev{\eta}_{sN^2}^{\eps N}(x)\vec{\eta}_{sN^2}^{\eps N}(x)\nabla_N\phi\Bigl(\frac{x-\boldsymbol{v}_N s}{N}\Bigr)\de s\right)^2\right]\nonumber
        \\&\lesssim t^2 N^{1-2\gamma}\|\nabla\phi\|_\infty^2 E_{\nu_N^\alpha}\left[\left(\sum_{x\in\mathbb{T}_N}\cev{\eta}^{\eps N}(x)\vec{\eta}^{\eps N}(x)\right)^2\right]\nonumber
        \\\begin{split}& \lesssim t^2  N^{1-2\gamma}\|\nabla\phi\|_\infty^2\eps N\sum_{x\in\mathbb{T}_N} E_{\nu_N^\alpha}\left[\left(\cev{\eta}^{\eps N}(x)\vec{\eta}^{\eps N}(x)\right)^2\right]
        \\&\phantom{\lesssim}+t^2  N^{1-2\gamma}\|\nabla\phi\|_\infty^2\sum_{|x-y|>2\eps N} E_{\nu_N^\alpha}\left[\cev{\eta}^{\eps N}(x)\vec{\eta}^{\eps N}(x)\cev{\eta}^{\eps N}(y)\vec{\eta}^{\eps N}(y)\right].
        \end{split}\label{eq:quadratic_twosum}
    \end{align}
    
    Note now that, for each $x\in\mathbb{T}_N$,
    \begin{align}
        &E_{\nu_N^\alpha}\left[\left(\cev{\eta}^{\eps N}(x)\vec{\eta}^{\eps N}(x)\right)^2\right]\nonumber
        \\&=\frac{1}{\eps^4N^4} E_{\nu_N^\alpha}\left[(\bar\eta_{x-\eps N}+\ldots + \bar\eta_{x-1})^2(\bar\eta_{x+1}+\ldots+\bar\eta_{x+\eps N})^2\right]\label{eq:expectation_terms}
        \\&\lesssim \frac{1}{\eps^2N^2}+\frac{N^{-\alpha}}{\eps N}+N^{-2\alpha}.\nonumber
    \end{align}
    Here, we used the fact that the number of correlation terms coming from the expectation in \eqref{eq:expectation_terms} which are at minimum distance one -- and thus have correlation of order at most $N^{-\alpha}$ -- is $\eps^3N^3$; for all the other terms, the correlation improves to $N^{-2\alpha}$. Similarly, we see that, for each $|x-y|>2\eps N$,
    \begin{equation*}
        E_{\nu_N^\alpha}\left[\cev{\eta}^{\eps N}(x)\vec{\eta}^{\eps N}(x)\cev{\eta}^{\eps N}(y)\vec{\eta}^{\eps N}(y)\right]\lesssim \frac{N^{-\alpha}}{\eps N}+N^{-2\alpha}.
    \end{equation*}
    This yields
    \begin{equation*}
        \eqref{eq:quadratic_twosum}\lesssim t^2  \|\nabla\phi\|_\infty^2\left\{\frac{N^{1-2\gamma}}{\eps}+\left(1+\frac{1}{\eps}\right)N^{2-2\gamma-\alpha}+(\eps+1) N^{3-2\gamma-2\alpha}\right\}. 
    \end{equation*}
    Using the computations made in Section \ref{sec:dynkin} and Proposition \ref{prop:fixed_time}, a standard argument shows that any limit point $\{\mathscr{Y}_t, t\in[0, T]\}$ of $\{\mathscr{Y}_t^N, t\in[0, T]\}_N$ is concentrated on the unique stationary solution of \eqref{eq:limit_OU};

    \item if, instead, $\gamma=\frac{1}{2}$, then \eqref{eq:quadratic_term_2} can be rewritten as
    \begin{equation*}
        (2b-6b\bar\rho_N)\int_0^t\frac{1}{N}\sum_{x\in\mathbb{T}_N}\nabla_N\phi\Bigl(\frac{x-\boldsymbol{v}_N s}{N}\Bigr)\mathscr{Y}_s^N\left(\iota_\eps\Bigl(\frac{x}{N}\Bigr)\right)^2\de s,
    \end{equation*}
    where $\iota_\eps(u)(\cdot)$ is a continuous approximation of $\frac{1}{\eps}\boldsymbol{1}_{(u, u+\eps]}(\cdot)$. Together with the computations made in Section \ref{sec:dynkin} and Proposition \ref{prop:fixed_time}, this shows that any limit point $\{\mathscr{Y}_t, t\in[0, T]\}$ of $\{\mathscr{Y}_t^N, t\in[0, T]\}_N$ satisfies items i) and iii) of Definition \ref{def:energy_solutions}. Item ii) follows again from Theorem \ref{thm:gen_bg}; see \cite{gj14} for a full, analogue argument. Finally, item iv) follows from repeating the computations in Section \ref{sec:dynkin} for the reversed dynamics, namely the process generated by the adjoint of $\mathcal{L}^N$ with respect to $\nu_N^\alpha$.

\end{enumerate}
This completes the proof.
\end{proof}

\appendix

\section{On the Invariant Measure of the KLS Model}\label{sec:appendix}
This appendix is dedicated to the study of the stationary measure of the KLS model. For the sake of completeness, we first give a simple proof of the invariance of the measure  $\nu_N^\alpha$ for the dynamics of the KLS model for our choice of parameters. We then show how to derive its desired properties (Lemmas \ref{lemma:expectation} and \ref{lemma:correlations}) via the \textit{transfer matrix} approach, and we conclude by showing Proposition \ref{prop:fixed_time} via a CLT for $\alpha$-mixing random variables.

\subsection{Stationarity of the Measure}
\begin{proposition} The measure $\nu_N^\alpha$ defined in \eqref{eq:nuN_alpha} is invariant for the dynamics of the KLS model with parameters \eqref{eq:params}.
\end{proposition}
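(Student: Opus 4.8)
\subsection*{Proof proposal}

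The plan is to verify the stationarity identity $\int\mathcal{L}^Nf\,\mathrm d\nu_N^\alpha=0$ for every $f:\Omega_N\to\mathbb{R}$, which on the finite state space $\Omega_N$ is equivalent to the pointwise balance equation
\[
\sum_{x\in\mathbb{T}_N}\Big[\nu_N^\alpha(\eta^{x,x+1})\,c^N_{x,x+1}(\eta^{x,x+1})-\nu_N^\alpha(\eta)\,c^N_{x,x+1}(\eta)\Big]=0\qquad\text{for every }\eta\in\Omega_N.
\]
The first step is to compute the weight ratios: since a swap across $\{x,x+1\}$ only affects the three bonds of $H$ touching $\{x,x+1\}$, one has $H(\eta^{x,x+1})-H(\eta)=\eta_{x+2}-\eta_{x-1}$ when $\eta_x=1,\eta_{x+1}=0$ and $H(\eta^{x,x+1})-H(\eta)=\eta_{x-1}-\eta_{x+2}$ when $\eta_x=0,\eta_{x+1}=1$ (bonds with $\eta_x=\eta_{x+1}$ contribute nothing, the rate being $0$ there), so that $\nu_N^\alpha(\eta^{x,x+1})/\nu_N^\alpha(\eta)=e^{\mp N^{-\alpha}(\eta_{x+2}-\eta_{x-1})}$ respectively.

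The second step is a local detailed-balance relation. Write $R_x(\eta):=\eta_{x-1}(1+\eps_N)+\eta_{x+2}(1-\eps_N)$ and $L_x(\eta):=\eta_{x-1}(1-\eps_N)+\eta_{x+2}(1+\eps_N)$ for the rate factors in \eqref{eq:our_rates}, so that $c^N_{x,x+1}(\eta)=p_N R_x(\eta)$ on a $10$-bond and $c^N_{x,x+1}(\eta)=q_N L_x(\eta)$ on a $01$-bond. I would then check the identity
\[
e^{-N^{-\alpha}(\eta_{x+2}-\eta_{x-1})}\,L_x(\eta)=R_x(\eta),
\]
which is trivial when $\eta_{x-1}=\eta_{x+2}$ and, in the two remaining cases, reduces exactly to $\tfrac{1+\eps_N}{1-\eps_N}=e^{N^{-\alpha}}$, the relation satisfied by $\eps_N$ in \eqref{eq:params}. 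Using this to rewrite $\nu_N^\alpha(\eta^{x,x+1})L_x(\eta)=\nu_N^\alpha(\eta)R_x(\eta)$ (and its analogue on $01$-bonds), together with $q_N-p_N=-bN^{-\gamma}$, a short computation turns the left-hand side of the balance equation into
\[
bN^{-\gamma}\,\nu_N^\alpha(\eta)\Big[\sum_{x:\,\eta_x=0,\,\eta_{x+1}=1}L_x(\eta)-\sum_{x:\,\eta_x=1,\,\eta_{x+1}=0}R_x(\eta)\Big].
\]

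Thus stationarity reduces to the purely combinatorial identity $\sum_{x:\,\eta_x=0,\,\eta_{x+1}=1}L_x(\eta)=\sum_{x:\,\eta_x=1,\,\eta_{x+1}=0}R_x(\eta)$ for every $\eta$, which I would prove by expanding both sides into monomials in the occupation variables, treating the coefficients of $1-\eps_N$ and $1+\eps_N$ separately, and reindexing around the torus: the $(1-\eps_N)$-parts coincide under the substitution $x\mapsto x+1$, and the $(1+\eps_N)$-parts likewise coincide after collecting terms, the leftover being a difference of sums of the form $\sum_x\eta_x\eta_{x+1}$ and $\sum_x\eta_{x-1}\eta_x\eta_{x+1}$, each invariant under translation and hence cancelling. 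Finally I would remark that $\eps_N$ enters only through $\tfrac{1+\eps_N}{1-\eps_N}=e^{N^{-\alpha}}$, consistent with the remark following \eqref{eq:nuN_alpha}. The only mildly delicate point is keeping track of the three bond-types and of the index shifts on the torus; conceptually the mechanism is simply reversibility of the symmetric part of the rates plus a divergence-free asymmetric current.
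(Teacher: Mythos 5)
Your proof is correct and follows essentially the same route as the paper's: both verify the global balance condition pointwise in $\eta$, exploit the local identity $\tfrac{1+\eps_N}{1-\eps_N}=e^{N^{-\alpha}}$ to cancel the $\eps_N$-dependent weight ratios against the rate factors, and reduce to the translation-invariance identity $\sum_{x}\eta_{x+1}(1-\eta_x)[(1-\eps_N)\eta_{x-1}+(1+\eps_N)\eta_{x+2}]=\sum_{x}\eta_x(1-\eta_{x+1})[(1+\eps_N)\eta_{x-1}+(1-\eps_N)\eta_{x+2}]$. The only (cosmetic) difference is that you factor out $q_N-p_N$ to get a single combinatorial identity, whereas the paper treats the $p_N$- and $q_N$-parts as two separate (symmetric) identities, each valid for arbitrary $p_N,q_N$.
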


\begin{proof} We will actually show that the measure \eqref{eq:nuN} satisfies the \textit{global balance} condition, namely
\begin{equation}\label{eq:global_balance}
    \sum_{x\in\mathbb{T}_N}\left[\nu_N(\eta^{x, x+1})c_{x, x+1}^N(\eta^{x, x+1})-\nu_N(\eta)c_{x, x+1}^N(\eta)\right]=0
\end{equation}
for each $\eta\in\Omega_N$, where the rates $c_{x, x+1}^N$ are given by \eqref{eq:rates} with $\kappa_N=-1$. In particular, this implies that $\int_{\Omega_N} \mathcal{L}^Nf\de \nu_N^\alpha=0$ for each cylinder function $f:\Omega_N\to\mathbb{R}$, and thus the measure $\nu_N^\alpha$ is invariant for the dynamics with parameters \eqref{eq:params}.

Calling $1+\eps_N=:a$ and $1-\eps_N=:b$, in order to show \eqref{eq:global_balance}, it suffices to prove that, for each $\eta\in\Omega_N$,
\begin{equation*}
    \sum_{x\in\mathbb{T}_N} \eta_{x+1}(1-\eta_x)[a\eta_{x-1}+b\eta_{x+2}]\frac{\nu_N(\eta^{x, x+1})}{\nu_N(\eta)}=\sum_{x\in\mathbb{T}_N}\eta_x(1-\eta_{x+1})[a\eta_{x-1}+b\eta_{x+2}]
\end{equation*}
and
\begin{equation*}
    \sum_{x\in\mathbb{T}_N} \eta_x(1-\eta_{x+1})[b\eta_{x-1}+a\eta_{x+2}]\frac{\nu_N(\eta^{x, x+1})}{\nu_N(\eta)}=\sum_{x\in\mathbb{T}_N}\eta_{x+1}(1-\eta_x)[b\eta_{x-1}+a\eta_{x+2}].
\end{equation*}
We will only show {the first identity, as the proof of the second one} is completely analogous. 

Note that 
\begin{equation*}
    \frac{\nu_N(\eta^{x, x+1})}{\nu_N(\eta)}=\Bigl(\frac{b}{a}\Bigr)^{(\eta_x-\eta_{x+1})(\eta_{x+2}-\eta_{x-1})}.
\end{equation*}
Now, on the event $\{\eta_{x+1}=1, \eta_x=0\}$, the expression above reads
\begin{equation*}
    \Bigl(\frac{b}{a}\Bigr)^{\eta_{x-1}-\eta_{x+2}}=\frac{b}{a}\eta_{x-1}(1-\eta_{x+2})+\frac{a}{b}\eta_{x+2}(1-\eta_{x-1})+1-\eta_{x-1}-\eta_{x+2}+2\eta_{x-1}\eta_{x+2},
\end{equation*}
and hence, on that event, we have that
\begin{align*}
    [a\eta_{x-1}+b\eta_{x+2}]\frac{\nu_N(\eta^{x, x+1})}{\nu_N(\eta)}&=b\eta_{x-1}(1-\eta_{x+2})+(a+b)\eta_{x-1}\eta_{x+2}+a\eta_{x+2}(1-\eta_{x-1})
    \\&=a\eta_{x+2}+b\eta_{x-1}.
\end{align*}
But then, it suffices to prove that
\begin{equation*}
    \sum_{x\in\mathbb{T}_N} \eta_{x+1}(1-\eta_x)[b\eta_{x-1}+a\eta_{x+2}]=\sum_{x\in\mathbb{T}_N}\eta_x(1-\eta_{x+1})[a\eta_{x-1}+b\eta_{x+2}],
\end{equation*}
which follows immediately by performing a shift in the summation index.
\end{proof}

\subsection{Expectations via the Transfer Matrix}
Define the \textit{transfer matrix} of the measure $\nu_N^\alpha$ as
\begin{equation}\label{def:transfer_matrix}
    T:=\begin{bmatrix}
    1 & 1 \\ 1 & e^{-N^{-\alpha}}
    \end{bmatrix}.
\end{equation}
This matrix encodes the particle interactions by evaluating the contribution of the term $\eta_x\eta_{x+1}$ in $e^{-N^{-\alpha}H(\eta)}$ in the four distinct cases $(\eta_x, \eta_{x+1})=\{(0, 0), (0, 1), (1, 0), (1, 1)\}$, and the idea behind the transfer matrix approach is to use the matrix $T$ to derive simple expressions of expected values with respect to $\nu_N^\alpha$; see \cite{bax82} for a more general, detailed exposition. Since $T$ is symmetric, it can be diagonalised, and in particular we can write $T=Q\Lambda Q^{-1}$ with 
\begin{equation}\label{eq:matrices}
    Q=\begin{bmatrix} 1 & 1 \\ \lambda_+-1 & \lambda_--1
        
    \end{bmatrix}, \ \ \ \ \ \ \Lambda= \begin{bmatrix}
        \lambda_+ & 0 \\ 0 & \lambda_-
    \end{bmatrix},
\end{equation}
where
\begin{equation}\label{eq:lambdas}
    \lambda_\pm=\frac{(1+e^{-N^{-\alpha}})\pm\sqrt{(1-e^{-N^{-\alpha}})^2+4}}{2}.
\end{equation}

We will now make use of the matrix $T$ to show Lemmas \ref{lemma:partition}, \ref{lemma:expectation} and \ref{lemma:correlations}.

\begin{proof}[Proof of Lemma \ref{lemma:partition}]
    Note that the partition function $Z_N^\alpha$ of $\nu_N^\alpha$ can be written as
\begin{equation}\label{eq:partition_expression}
    Z_N^\alpha=\text{Tr}(T^N)=\lambda_+^N+\lambda_-^N,
\end{equation}
where $\lambda_+, \lambda_-$ are the eigenvalues of $T$, given in \eqref{eq:lambdas}. Since $|\lambda_-|<1$, we have that
\begin{align*}
    \mylim_{N\to\infty}\mylog \frac{Z_N^\alpha}{2^N}&=\mylim_{N\to\infty}\mylog\frac{\left((1+e^{-N^{-\alpha}})+\sqrt{(1-e^{-N^{-\alpha}})^2+4}\right)^N}{4^N}
    \\&=\mylim_{N\to\infty} N \mylog\left(1+\frac{(1+e^{-N^{-\alpha}})+\sqrt{(1-e^{-N^{-\alpha}})^2+4}-4}{4}\right)
    \\&=\mylim_{N\to\infty} N f_\alpha(N),
\end{align*}
where $f_\alpha(N)\lesssim N^{-\alpha}$. Since $\alpha>1$, this completes the proof.
\end{proof}

\begin{proof}[Proof of Lemma \ref{lemma:expectation}]
Note that, for each $x\in\mathbb{T}_N$, we can write
\begin{equation}\label{eq:moment_expression}
    E_{\nu_\alpha^N}[\eta_x]=\frac{(T^N)_{1, 1}}{Z_N^\alpha}.
\end{equation}
Using the matrix $Q$ in \eqref{eq:matrices} and the expression \eqref{eq:partition_expression} for the partition function, since $\lambda_+$ is strictly greater than $\lambda_-$ in absolute value, this implies that
\begin{align*}
    \left|E_{\nu_\alpha^N}[\eta_x]-\frac{1}{2}\right|&=\left|\frac{\lambda_+^N(\lambda_+-1)-\lambda_-^N(\lambda_--1)}{(\lambda_+-\lambda_-)(\lambda_+^N+\lambda_-^N)}-\frac{1}{2}\right|
    \\&\lesssim\left|\frac{\lambda_+-1}{\lambda_+-\lambda_-}-\frac{1}{2}\right|
    \\&=\left|\frac{e^{-N^{-\alpha}}-1+\sqrt{(1-e^{-N^{-\alpha}})^2+4}}{2\sqrt{(1-e^{-N^{-\alpha}})^2+4}}-\frac{1}{2}\right|
    \\&\lesssim N^{-\alpha},
\end{align*}
as claimed.
\end{proof}

\begin{proof}[Proof of Lemma \ref{lemma:correlations}]
Let $x_1<\ldots<x_m\in\mathbb{T}_N$ and $d_1, \ldots, d_m$ be as in the statement: the key observation is that we can write
\begin{equation*}
    E_{\nu_N^\alpha}\left[\prod_{i=1}^m\eta_{x_m}\right]=\frac{\prod_{i=1}^m (T^{d_i})_{1, 1}}{Z_N^\alpha}.
\end{equation*}

We only show the result for $m=2$, as the argument runs the same for any value of $m$ (though the computations are more tedious). By the expression above, using the matrix $Q$ in \eqref{eq:matrices} and the expression \eqref{eq:partition_expression}, for each $x_1<x_2$ in $\in\mathbb{T}_N$ we have that
\begin{align*}
    E_{\nu_\alpha^N}[\eta_{x_1}\eta_{x_2}]&=\frac{(T^{d_1})_{1,1}(T^{d_2})_{1, 1}}{\text{Tr}(T^N)}
    \\&=\frac{\left[\lambda_+^{d_1}(\lambda_+-1)-\lambda_-^{d_1}(\lambda_--1)\right]\left[\lambda_+^{d_2}(\lambda_+-1)-\lambda_-^{d_2}(\lambda_--1)\right]}{(\lambda_+-\lambda_-)^2(\lambda_+^N+\lambda_-^N)}.
\end{align*}
Calling $\lambda_+-1=:a$ and $\lambda_--1=:b$, using \eqref{eq:moment_expression} and recalling that $d_1+d_2=N$, an easy computation shows that
\begin{align*}
    E_{\nu_\alpha^N}&[\eta_{x_1}\eta_{x_2}]-E_{\nu_\alpha^N}[\eta_{x_1}]^2
    \\& =\frac{-ab(\lambda_+^{N+d_1}\lambda_-^{d_2}+\lambda_+^{N+d_2}\lambda_-^{d_1}+\lambda_+^{d_1}\lambda_-^{N+d_2}+\lambda_+^{d_2}\lambda_-^{N+d_1}+2\lambda_+^N\lambda_-^N)}{(a-b)^2(\lambda_+^N+\lambda_-^N)^2}.
\end{align*}
But now, since that $\lambda_+>1$ and $\lambda_-=-\frac{1}{2}N^{-\alpha}+O(N^{-2\alpha})$, this yields
\begin{align*}
    \left|E_{\nu_\alpha^N}[\eta_{x_1}\eta_{x_2}]-E_{\nu_\alpha^N}[\eta_{x_1}]^2\right|&\lesssim \lambda_+^{d_1-N}\lambda_-^{d_2}+\lambda_+^{d_2-N}\lambda_-^{d_1}
    \\&=\left(\frac{\lambda_-}{\lambda_+}\right)^{d_1}+\left(\frac{\lambda_-}{\lambda_+}\right)^{d_2}
    \\&\lesssim N^{-\alpha\mymin\{d_1, d_2\}},
\end{align*}
as claimed. \end{proof}

\subsection[An Application of a CLT for $\alpha$-Mixing Sequences]{An Application of a CLT for $\boldsymbol{\alpha}$-Mixing Sequences}
We conclude the section by showing Proposition \ref{prop:fixed_time}. Note that the random variables $\{\bar\eta_x\}_x$ are identically distributed but \textit{not} independent, so one cannot apply the classical CLT to show convergence of the density fluctuation field at a fixed time to a Gaussian random variable. However, as highlighted by Lemma \ref{lemma:correlations}, this sequence is very weakly dependent, in the sense that its correlation functions quickly decay in space: as we will show, this allows us to apply a generalisation of the CLT to \textit{$\alpha$-mixing} sequences of random variables; see \cite{bra07} for an exhaustive exposition of CLT's under mixing conditions.

Given a probability space $(\Omega, \mathcal{F}, \prob)$, we define the \textit{$\alpha$-mixing coefficient} of two sub-sigma algebras $\mathcal{A}, \mathcal{B}\subset\mathcal{F}$ as 
\begin{align}\label{def:coefficients}
    &\alpha(\mathcal{A}, \mathcal{B}):=\mysup_{A\in\mathcal{A}, B\in\mathcal{B}}\left |\prob(A\cap B)-\prob(A)\prob(B)\right|.
\end{align}
On the torus $\mathbb{T}_n:=\{1, \ldots, n\}$, let the distance between to subsets $I, J\subset\mathbb{T}_n$ be
\begin{equation*}
    d(I, J):=\mymin_{i\in I, j\in J}\{|i-j|\wedge (n-|i-j|)\}.
\end{equation*}
Given a triangular array $\{X_{n, k}\}_{n\in\mathbb{N}, k\in\mathbb{T}_n}$ of random variables, 
we define its \textit{$\alpha$-mixing coefficients} as 
\begin{align*}
    &\alpha_n(r):=\mysup_{\substack{I, J\subset \mathbb{T}_n \\ d(I, J)\ge r}}\alpha\big(\sigma\{\eta_i: i\in I\}, \sigma\{\eta_j: j\in J\}\big)
\end{align*}
for each integer $1\le r\le \lfloor\frac{n}{2}\rfloor$, with the convention $\alpha_n(0):=\frac{1}{4}$. {The result that follows can be found in \cite[Corollary 1]{rio97}.} 

\begin{theorem}\label{thm:alpha_clt} Let $\{X_{n, k}\}_{n\in\mathbb{N}, k\in\mathbb{T}_n}$ be a triangular array of centred random variables with finite variance, and call $Q_{n, k}$ the càdlàg inverse of $t\mapsto \prob\{|X_{n, k}|>t\}$, {namely $Q_{n, k}(u):=\mysup\{t\in[0, \infty): \prob\{|X_{n, k}|>t\}>u\}$}. Let $\{\alpha_n(r)\}_r$ denote the sequence of $\alpha$-mixing coefficients of $\{X_{n, k}\}_k$, and let $\alpha_n^{-1}$ be the càdlàg inverse of $t\mapsto \alpha_n(\lfloor t\rfloor)$. Moreover, calling $\sigma_n^2:=\Var\left(\sum_{k=1}^nX_{n, k}\right)$, assume that
\begin{equation}\label{condition:variance_ratio}
    \mylimsup_{n\to\infty}\mymax_{k, m=1, \ldots, n}\frac{\Var\big(\sum_{j=k}^mX_{n, j}\big)}{\sigma_n^2}<\infty
\end{equation}
and
\begin{equation}\label{condition:lindeberg}
    \mylim_{n\to\infty} \sigma_n^{-3}\sum_{k=1}^n\int_0^1\alpha_n^{-1}\Bigl(\frac{u}{2}\Bigr)Q_{n, k}^2(u)\mymin\left\{\alpha_n^{-1}\Bigl(\frac{u}{2}\Bigr)Q_{n, k}(u), \sigma_n\right\}\de u=0.
\end{equation}
Then
\begin{equation*}
    \frac{\sum_{k=1}^n X_{n, k}}{\sigma_n}\to\mathcal{N}(0, 1)
\end{equation*}
in distribution as $n\to\infty$.
\end{theorem}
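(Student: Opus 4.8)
The plan is to prove this by the Lindeberg replacement method in the form developed by Rio for strongly mixing arrays; the statement is precisely \cite[Corollary~1]{rio97}, and the argument below reconstructs its skeleton. Normalising, I would first assume $\sigma_n=1$ (dividing every $X_{n,k}$ by $\sigma_n$, which rescales each quantile function $Q_{n,k}$ by the same factor, leaves the $\sigma$-algebras and hence the mixing coefficients unchanged, and — as a short computation shows — preserves both \eqref{condition:variance_ratio} and \eqref{condition:lindeberg}), so that it suffices to show $\expected[f(S_n)]\to\expected[f(Z)]$ for every $f\in C^3_b(\mathbb{R})$ and $Z\sim\mathcal{N}(0,1)$, where $S_n:=\sum_{k=1}^nX_{n,k}$; convergence in distribution then follows since $C^3_b$ is convergence-determining. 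Two quantitative tools drive everything: Rio's covariance inequality, $|\operatorname{Cov}(X,Y)|\le 2\int_0^{2\alpha(\sigma(X),\sigma(Y))}Q_{|X|}(u)Q_{|Y|}(u)\,\de u$, and Bradley's coupling lemma in Rio's quantitative form, which lets one replace a variable by an independent copy of itself at an $L^1$-cost $2\int_0^{\alpha}Q_{|X|}(u)\,\de u$.

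First I would truncate at the level $\sigma_n=1$: write $X_{n,k}=X'_{n,k}+X''_{n,k}$ with $X'_{n,k}$ the centred truncation $X_{n,k}\mathbf{1}\{|X_{n,k}|\le1\}-\expected[X_{n,k}\mathbf{1}\{|X_{n,k}|\le1\}]$. Since $Q_{|X''_{n,k}|}(u)=Q_{n,k}(u)\,\mathbf{1}\{Q_{n,k}(u)>1\}$, summing Rio's covariance inequality over all pairs against the mixing weights bounds $\Var(\sum_kX''_{n,k})$ by a constant multiple of the functional in \eqref{condition:lindeberg} — on the range $\{Q_{n,k}(u)>1\}$ the truncation $\min\{\alpha_n^{-1}(u/2)Q_{n,k}(u),1\}$ is exactly what the covariance sum produces — hence $\sum_kX''_{n,k}\to0$ in $L^2$, while \eqref{condition:variance_ratio} guarantees $\Var(\sum_kX'_{n,k})\to1$ as well. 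So it is enough to prove the CLT for the bounded array $\{X'_{n,k}\}$, whose terms satisfy $|X'_{n,k}|\le2$ and whose mixing coefficients are dominated by $\alpha_n$.

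Then comes the replacement. Fixing $f\in C^3_b$, I would run a telescoping interpolation between $S_n=\sum_kX'_{n,k}$ and a Gaussian, keeping the prefix $\sum_{j<k}X'_{n,j}$ fixed, swapping the $k$-th term for an independent Gaussian $v_kZ_k$ of the appropriate variance, and keeping the suffix; a third-order Taylor expansion at each stage generates three families of error. The Taylor remainder is $O(\|f'''\|_\infty(\expected|X'_{n,k}|^3+v_k^3))$, and because $|X'_{n,k}|\le1$ forces $|X'_{n,k}|^3\le|X'_{n,k}|^2\min\{|X'_{n,k}|,1\}$, after re-expressing the suffix via the covariance inequality this sums to exactly the left-hand side of \eqref{condition:lindeberg}. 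The second family compares $v_k^2$ (a conditional variance of $X'_{n,k}$ given the distant past) with the Gaussian variance it replaces; this difference is a covariance of $X'_{n,k}$ with a Lipschitz function of the past, hence controlled by Rio's inequality with the same weights $\alpha_n^{-1}(u/2)Q_{n,k}^2(u)$. The third family is the genuine dependence error — $X'_{n,k}$ is not independent of the suffix inside $f$ — and it is removed by Bradley's coupling at a cost of the same covariance order. Throughout, \eqref{condition:variance_ratio} is used to keep every intermediate block variance $\Var(\sum_{j=k}^mX'_{n,j})$ bounded by a fixed multiple of $\sigma_n^2=1$, so that blocks may be replaced by Gaussians of comparable size and $\sum_kv_k^2\to1$. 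Summing the three families over $k$ and invoking \eqref{condition:lindeberg}, the total error tends to $0$, giving $\expected[f(S_n)]\to\expected[f(Z)]$.

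The hard part is not the telescoping, which is mechanical once set up, but the bookkeeping that converts the abstract $\alpha$-mixing numbers together with the tail information packaged in the quantile functions $Q_{n,k}$ into exactly the integral functional of \eqref{condition:lindeberg}, with the $\min\{\,\cdot\,,\sigma_n\}$ cutoff that is unavoidable when only second moments are assumed; making every estimate uniform over the triangular array, where no stationarity or ergodic-theorem shortcut is available, is where the real effort lies. The optimal constants and full details are in \cite{rio97}.
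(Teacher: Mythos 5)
The paper does not prove this statement at all: Theorem \ref{thm:alpha_clt} is imported verbatim from \cite[Corollary 1]{rio97} and is used as a black box, so there is no internal proof to measure your argument against. Your outline correctly identifies the source and reproduces the skeleton of Rio's actual argument --- normalisation to $\sigma_n=1$, truncation at the variance scale, Rio's covariance inequality $|\operatorname{Cov}(X,Y)|\le 2\int_0^{2\alpha}Q_{|X|}Q_{|Y|}\,\de u$ to control the discarded tails and the dependence errors, and a term-by-term Lindeberg replacement with a coupling step --- and nothing in the outline is wrong in spirit. But be aware that, as written, it is a summary of the reference's proof rather than a proof: every quantitative step that makes the theorem true (that the Taylor remainders, the conditional-variance mismatches, and the coupling costs each assemble into exactly the integral functional of \eqref{condition:lindeberg} with the $\mymin\{\cdot,\sigma_n\}$ cutoff, uniformly over the array) is asserted and then delegated to \cite{rio97}. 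Two small inaccuracies worth fixing if you ever write this out: $\Var(\sum_k X'_{n,k})\to 1$ follows from $\Var(\sum_k X''_{n,k})\to 0$ and Cauchy--Schwarz on the cross term, not from \eqref{condition:variance_ratio} (whose actual role is to keep the intermediate block variances, and hence the Gaussian surrogates, of the right order); and your bound $|X'_{n,k}|\le 2$ after centring the truncation is inconsistent with the later use of $|X'_{n,k}|\le 1$, which only costs a constant but should be stated once, correctly. Given that the paper itself treats this as a cited external result, citing \cite{rio97} as you do is the appropriate resolution.
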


\begin{remark}
Condition \eqref{condition:lindeberg} plays the role of a Lindeberg-type control adapted to the mixing framework. In the independent case, the Berry-Esseen theorem requires that the normalised third moments vanish, namely
\begin{equation*}
    \sigma_n^{-3}\sum_{k=1}^n \mathbb{E}|X_{n, k}|^3 \to0
\end{equation*}
as $n\to\infty$, to ensure that no single summand, or small group of summands, dominates the fluctuations; see, for example, \cite[Theorem 1.1]{bbg96}. Under dependence, raw moments are no longer sufficient because large values can be reinforced by correlations. The integral involving the quantile functions $Q_{n,k}$ and the mixing penalty $\alpha_n^{-1}$ is a somewhat natural generalisation: it measures the contribution of the tails of each $X_{n, k}$ and amplifies them according to the strength of dependence. The presence of $\mymin\{\alpha_n^{-1}(\frac{u}{2})Q_{n, k}(u), \sigma_n\}$ compares this aggregated size with the global variance $\sigma_n$, ensuring that only ``genuinely large" deviations (relative to the Gaussian fluctuation scale) are penalised. Intuitively, then, \eqref{condition:lindeberg} rules out the possibility that a few  heavy-tailed or strongly-dependent terms could upset the Gaussian limit, and guarantees that the asymptotic fluctuations arise from the collective effect of many small, weakly correlated contributions.
\end{remark}

\begin{proof}[Proof of Proposition \ref{prop:fixed_time}] We will show that, for each continuous $g:\mathbb{T}\to\mathbb{R}$, the sequence $\{Y_{N, x}\}_{N\in\mathbb{N}, x\in\mathbb{T}_N}$ defined via
\begin{equation*}
    Y_{N, x}:=\frac{1}{\sqrt{N}}g\Bigl(\frac{x}{N}\Bigr)\bar\eta_x^N
\end{equation*}
satisfies the hypotheses of Theorem \ref{thm:alpha_clt}. First note that, by Lemmas \ref{lemma:expectation} and \ref{lemma:correlations},
\begin{align*}
    \sigma_N^2&:=\Var_{\nu_N^\alpha}\left(\sum_{x\in\mathbb{T}_N} Y_{N, x}\right)
    \\&=\frac{1}{N}\sum_{x\in\mathbb{T}_N} g\Bigl(\frac{x}{N}\Bigr)^2E_{\nu_N^\alpha}[\bar\eta_x^2]+\frac{1}{N}\sum_{x\ne y}g\Bigl(\frac{x}{N}\Bigr)g\Bigl(\frac{y}{N}\Bigr)E_{\nu_N^\alpha}[\bar\eta_x\bar\eta_y]
    \\&=\frac{1}{N}\sum_{x\in\mathbb{T}_N} g\Bigl(\frac{x}{N}\Bigr)^2\left(\frac{1}{4}+O(N^{-\alpha})\right)+O\left(\|g\|_\infty^2 N^{-\alpha}\right)  
    \\&\to \frac{1}{4}\|g\|_2^2.
\end{align*}
{Similarly, for each $k, m\in\mathbb{T}_N$, we see that
\begin{equation*}
    \Var_{\nu_N^\alpha}\left(\sum_{x=k}^m Y_{N, x}\right)=\frac{1}{N}\sum_{x=k}^m g\Bigl(\frac{x}{N}\Bigr)^2\left(\frac{1}{4}+O(N^{-\alpha})\right)+O\left(\|g\|_\infty^2 N^{-\alpha}\right)  \lesssim 1,
\end{equation*}}
and thus \eqref{condition:variance_ratio} is automatically satisfied. 

We now claim that the $\alpha$-mixing coefficients of $\{\eta_x\}_{x\in\mathbb{T}_N}$ satisfy
\begin{equation}\label{eq:alpha_bound}
    \alpha_N(r)\lesssim N^{-\alpha r}, \ \ \ \ 0\le r \le \lfloor\tfrac{N}{2}\rfloor.
\end{equation}
Consider the transfer matrix $T$ defined in \eqref{def:transfer_matrix}, and let $v:=[1, \lambda_+-1]$ be an eigenvector -- corresponding to the first column of $Q$ in \eqref{eq:matrices} -- of the eigenvalue $\lambda_+$. We define the $2\times 2$ matrix $P$ by $P_{i, j}:=\frac{v_j}{\lambda_+ v_i}T_{i, j}$: it is then immediate to check that $P$ is a transition matrix. Note that we can view $\{\eta_x\}_{x\in\mathbb{T}_N}$ as a two-state Markov chain with transition matrix $P$ started at the uniform distribution $\mu=[\frac{1}{2}, \frac{1}{2}]$ and conditioned to return to its starting point after $N$ steps: indeed, denoting by $\{\xi_x\}_x$ a stationary Markov chain with transition probabilities $P$, we have that
\begin{align}
    \frac{\prob_\mu\{({\xi_0}, \ldots, \xi_N)=\eta\}}{\prob_\mu\{\xi_0=\xi_N\}}&= \frac{\mu(\eta_0)\prod_{x=0}^{N-1}P_{\eta_x, \eta_{x+1}}}{\sum_{j\in\{0, 1\}}\mu(j)(P^N)_{j, j}}\nonumber
    \\&= \frac{\prod_{x=0}^{N-1}e^{-N^{-\alpha}\eta_x\eta_{x+1}}}{\lambda_+^N(1+\rho_N^N)},\label{eq:markov_prob}
\end{align}
where $\rho_N$ is the smallest eigenvalue of $P$. It is immediate to check that $\rho_N=\frac{\lambda_-}{\lambda_+}$, so that, using \eqref{eq:partition_expression}, we conclude that $\eqref{eq:markov_prob}=\nu_N^\alpha(\eta)$. Now, since $\{\eta_x\}_x$ is an ergodic two-state Markov chain with smallest eigenvalue $\rho_N$, we have that, for each integer $0\le r\le \lfloor\frac{N}{2}\rfloor$, 
$\alpha_N(r)\lesssim |\rho_N|^r\lesssim N^{-\alpha r}$,
where we used $\lambda_-=-\frac{1}{2}N^{-\alpha}+O(N^{-2\alpha})$. 

We are now ready to study the expression in \eqref{condition:lindeberg}. By Markov's inequality, it is easy to see that, for all $x\in\mathbb{T}_N$ and any $\ell>0$,
\begin{equation*}
    Q_{N, x}(u)\le \frac{E_{\nu_N^\alpha}[|Y_{N, k}|^\ell]^{\frac{1}{\ell}}}{u^\frac{1}{\ell}}\lesssim \frac{\|g\|_\infty}{\sqrt{N}}u^{-\frac{1}{\ell}}.
\end{equation*}
Moreover, for each integer $1\le r\le \lfloor\frac{N}{2}\rfloor$, we have that $\alpha_N^{-1}(\frac{u}{2})=r$ if and only if $\frac{u}{2}\in[\alpha_N(r), \alpha_N(r-1))$, and hence, using that $\sigma_N\lesssim 1$,
\begin{align*}
    &\sigma_N^{-3}\sum_{x=1}^N\int_0^1\alpha_N^{-1}\Bigl(\frac{u}{2}\Bigr)Q_{N, x}^2(u)\mymin\left\{\alpha_N^{-1}\Bigl(\frac{u}{2}\Bigr)Q_{N, x}(u), \sigma_N\right\}\de u
    \\&\lesssim \frac{1}{N^\frac{3}{2}}\sum_{x=1}^N\sum_{r=1}^{\lfloor \frac{N}{2}\rfloor} r^2 \int_{2\alpha_N(r)}^{2\alpha_N(r-1)}u^{-\frac{2}{\ell}} \de u
    \\& \lesssim \frac{1}{\sqrt{N}}\sum_{r=1}^{\lfloor \frac{N}{2}\rfloor} r^2 \left[\alpha_N(r-1)^{1-\frac{2}{\ell}}-\alpha_N(r)^{1-\frac{2}{\ell}}\right]
    \\& \lesssim \frac{1}{\sqrt{N}} \sum_{r=1}^{\lfloor \frac{N}{2}\rfloor} r^2 \alpha_N(r-1)^{1-\frac{2}{\ell}}.
\end{align*}
But then, by \eqref{eq:alpha_bound}, the expression in \eqref{condition:lindeberg} is bounded from above by a constant times
\begin{equation*}
    \frac{1}{\sqrt{N}}\sum_{r=1}^{\lfloor \frac{N}{2}\rfloor } r^2 N^{-\alpha (r-1)(1-\frac{2}{\ell})},
\end{equation*}
which is $\lesssim\frac{1}{\sqrt{N}}$, and thus vanishes, for any $\ell>2$; this implies that \eqref{condition:lindeberg} is satisfied. 

Hence, by the convergence of $\sigma_N^2$, we finally get that
\begin{equation*}
    \sum_{x=1}^N Y_{N, x}\to \mathcal{N}\left(0, \frac{1}{4}\|g\|_2^2\right),
\end{equation*}
in distribution as $N\to\infty$, as claimed.
\end{proof}

\bibliographystyle{Martin} 
\bibliography{references}

\end{document}